\theoremstyle{plain}
\newtheorem*{thm*}{Theorem} %no numbering for Theorem*
\newtheorem{thm}{Theorem}
\newtheorem{lemma}[thm]{Lemma}
\newtheorem{coro}[thm]{Corollary}
\newtheorem{prop}[thm]{Proposition}
\newtheorem{defi}[thm]{Definition}
\theoremstyle{remark}
\newtheorem*{rmk*}{Remark}
\newtheorem*{Claim*}{Claim}
\numberwithin{thm}{section}
\newcommand{\R}{\mathbb{R}} %real numbers
\newcommand{\Z}{\mathbb{Z}} %integers
\newcommand{\ind}{\mathbbm{1}} %indicatrix function
\newcommand{\R}{\mathbf{R}} %real numbers
\renewcommand{\setminus}{\thicksim} %set theoretic difference à la Federer
\newcommand{\diff}{\mathop{}\mathopen{}\mathrm{d}} %différentielle
\newcommand{\calD}{\mathscr{D}}
\newcommand{\calH}{\mathscr{H}}
\newcommand{\calL}{\mathscr{L}}
\newcommand{\calP}{\mathscr{P}}
\newcommand{\calQ}{\mathscr{Q}}
\newcommand{\bCH}{\mathbf{CH}} %space of charges
\newcommand{\bch}{\mathbf{ch}}
\newcommand{\bF}{\mathbf{F}}
\newcommand{\bM}{\mathbf{M}}
\newcommand{\bN}{\mathbf{N}}
\newcommand{\bP}{\mathbf{P}}
\DeclareMathOperator{\rmLip}{\operatorname{Lip}} %Lipschitz constant
\DeclareMathOperator{\rmloc}{\mathrm{loc}}
\DeclareMathOperator{\spt}{\operatorname{spt}} %support
\newcommand{\hel} {
\hskip2.5pt{\vrule height7pt width.5pt depth0pt}
\hskip-.2pt\vbox{\hrule height.5pt width7pt depth0pt}
\, }
\newcommand*{\cone}{\mathrel{\mathpalette\@foo\relax}} % or \mathbin? or nothing?
\newcommand*{\@foo}[2]{%
   \setbox\z@=\hbox{\m@th$#1\times$}%
   \ooalign{\raise.25\ht\z@\copy\z@\cr\box\z@}% the .25 might depend on the used font
}
\def\XXint#1#2#3{{%
\setbox0=\hbox{$#1{#2#3}{\int}$}
\vcenter{\hbox{$#2#3$}}\kern-.5\wd0}}
\renewcommand{\leq}{\leqslant}
\renewcommand{\geq}{\geqslant}
\renewcommand{\subset}{\subseteq}
\DeclareMathAlphabet\euscr{U}{eus}{m}{n}
\definecolor{garde}{rgb}{0.88,0.88,1}
\renewcommand{\maketitle}{
\begin{titlepage}
%\makebox[0.9\textwidth]{%
\begin{center}
\includegraphics[width=\textwidth]{vol1.png}
\end{center}
%\vspace*{0.01\textheight}\noindent
{\sqrcfamily Thierry De Pauw}\hfill\par
\vspace*{0.01\textheight}
\noindent{\color{garde}\rule{\textwidth}{.2cm}}
%\rule{\textwidth}{1pt}\par
%\vspace{2pt}\vspace{-\baselineskip}
%\rule{\textwidth}{0.4pt}\par
\vspace{0.05\textheight}
\begin{center}
%\textcolor{title}{%\FSfont{5cz}% Chisel
{\sqrcfamily
{\huge Lecture Notes}\\[\baselineskip]
{\huge On Tame Analysis}\\[\baselineskip]% Delphian (5dp)
{\huge And Geometry}}% Mona Lisa
%}
\par
\vspace{0.0125\textheight}
%{\color{garde}\rule{0.45\textwidth}{1.2pt}}\par
%\vspace{0.1\textheight}
%{\sqrcfamily
%{\Large Tome 1}\\[2.5\baselineskip]
%{\Large Théorie de la Mesure et}\\[0.9\baselineskip]
%{\large et}\\[0.9\baselineskip]
%{\Large Analyse Fonctionnelle}}
\vfill
%{\large \textcolor{Red}{\plogo}}\\[0.5\baselineskip]
{\sqrcfamily 2014}\par
%\vspace*{\drop}
\end{center}
%}
%\hspace{4pt}
%{\color{garde}\rule{1cm}{\textheight}}
\end{titlepage}
}
\newlength{\drop}
\newcommand{\defeq}{\mathrel{\mathop:}=}
\begin{document}

%=================
% TITLE AND AUTHOR
%=================

%\titleAT %see above / page de garde d'un livre

\title{Fractional currents and Young geometric integration}

\author[Ph. Bouafia]{Philippe Bouafia}

\address{Fédération de Mathématiques FR3487 \\
  CentraleSupélec \\
  3 rue Joliot Curie \\
  91190 Gif-sur-Yvette
}

\email{philippe.bouafia@centralesupelec.fr}

\subjclass{49Q15, 28A75, 46E35, 55M25, 26A16} \keywords{Fractional and
  flat currents, Pushforward by Hölder maps, Young integration,
  Fractional Sobolev spaces, Brouwer degree}

\begin{abstract}
  We introduce a class of flat currents with fractal properties,
  called fractional currents, which satisfy a compactness theorem and
  remain stable under pushforwards by Hölder continuous maps. In top
  dimension, fractional currents are the currents represented by
  functions belonging to a fractional Sobolev space.

  The space of $\alpha$-fractional currents is in duality with a class
  of cochains, $\alpha$-fractional charges, that extend both Whitney's
  flat cochains and $\alpha$-Hölder continuous forms. We construct a
  partially defined wedge product between fractional charges,
  enabling a generalization of the Young integral to arbitrary
  dimensions and codimensions. This helps us identify
  $\alpha$-fractional $m$-currents as metric currents of the
  snowflaked metric space $(\mathbb{R}^d,
  \mathrm{d}_{\mathrm{Eucl}}^{(m+\alpha)/(m+1)})$.
\end{abstract}

\maketitle

\setcounter{tocdepth}{1}
\tableofcontents

\section{Introduction}
In this article, we define a class of currents in Euclidean space,
referred to as fractional currents, on which a form of Hölder
non-smooth calculus can be performed. By this, we mean, in particular,
the ability to push forward fractional currents by Hölder maps
(subject to certain restrictions on the Hölder exponent) or to
integrate ``rough'' differential forms over fractional currents. We
summarize our main results.

\subsection{Fractional currents}
To motivate the definition of fractional currents, we begin by
highlighting the importance of fractional Sobolev spaces in the Hölder
analysis of $\mathbb{R}^d$. Notably, a recent result of De Lellis and
Inauen~\cite{DeLeInau} demonstrates the fractional Sobolev regularity
of the Brouwer degree $\operatorname{deg}(f,U,\cdot)$ for a Hölder
continuous map $f \colon \operatorname{cl} U \to \mathbb{R}^d$ and a
bounded open set $U$ with a fractal boundary (under some restrictions
on the Hölder exponent of $f$ and the regularity of $\partial U$).
This result suggests that, in order to define an appropriate class of
currents for which pushforwards by Hölder maps can be meaningfully
constructed, such currents should be representable in codimension $0$
by fractional Sobolev functions.

It is in fact possible to characterize the fractional Sobolev space
$W^{1-\alpha, 1}_c(\R^d)$, with $0 < \alpha < 1$ (the subscript $c$
stands for compact support), in the language of currents. Recall that
a function $u \in L^1_c(\R^d)$ belongs to $W^{1-\alpha,1}_c(\R^d)$ if
its Gagliardo norm
\[
\|u\|_{W^{1-\alpha,1}(\R^d)} = \int_{\R^d} \int_{\R^d} \frac{|u(x) -
  u(y)|}{|x - y|^{d+1-\alpha}} \diff x \diff y
\]
is finite. It was noted in~\cite{BrasLindPari, Ponc} that
$\|u\|_{W^{1-\alpha, 1}(\R^d)}$ represents an intermediate quantity
between the total variation $\| Du\|(\R^d)$ and the $L^1$ norm of
$u$. Specifically, it satisfies the interpolation inequality
\[
\|u\|_{W^{1-\alpha,1}(\R^d)} \leq C \| Du \|(\R^d)^{1 - \alpha}
\|u\|_1^\alpha
\]
which, in fact, almost characterizes the Gagliardo norm $\| \cdot
\|_{W^{1 -\alpha, 1}(\R^d)}$ in the sense that we now describe.

Let $N \colon L^1_c(\R^d) \to [0, \infty]$ denote the largest
seminorm that satisfies
\begin{itemize}
\item[(A)] $N(u) \leq \| Du \|(\R^d)^{1 - \alpha} \|u\|_1^\alpha$ for all $u \in L^1_c(\R^d)$;
\item[(B)] Lower semicontinuity: $N(u) \leq \liminf_{k \to \infty}
  N(u_k)$ for all sequences $(u_k)$ in $L^1_c(\R^d)$ that
  $L^1$-converge to $u$, with the additional condition that there is a
  compact set $K \subset \R^d$ such that $\spt u_k \subset K$ for all
  $k$.
\end{itemize}
It is easy to prove that $N$ is given by the formula
\[
N(u) = \inf \sum_{k=0}^\infty \| Du_k \|(\R^d)^{1 - \alpha}
\|u_k\|_1^\alpha
\]
where the infimum is taken over all decompositions of $u$ as an
$L^1$-convergent series $u = \sum_{k=0}^\infty u_k$ into components
$u_k$ of bounded variation and supported in a common compact subset of
$\R^d$. We will establish in Section~\ref{sec:codim0} that $N$ is
Lipschitz equivalent to the norm $\| \cdot
\|_{W^{1-\alpha,1}(\R^d)}$. Note that this result is closely linked to
the idea presented in \cite{PoncSpec}, which considers the fractional
Sobolev space $W^{1-\alpha,1}$ ($0 < \alpha < 1$) as an interpolation
space between $L^1$ and $BV$.

In positive codimension, we obtain a suitable definition of a
fractional current by replacing, in the discussion above, the total
variation and the $L^1$-norm by higher-codimensional analogs, namely
the normal mass $\bN$ and Whitney's flat norm $\bF$. More precisely,
an $\alpha$-fractional $m$-current is a flat current $T \in
\bF_m(\R^d)$ that admits a decomposition into an $\bF$-convergent
series $T = \sum_{k=0}^\infty T_k$, where the components $T_k$ are
normal $m$-currents supported in a common compact subset of $\R^d$
such that
\[
\sum_{k=0}^\infty \bN(T_k)^{1 - \alpha} \bF(T_k)^\alpha < \infty.
\]
The space of $\alpha$-fractional currents $\bF^\alpha_m(\R^d)$,
equipped with an adequate norm, lies between the space of normal
currents and the space of flat currents. This space extends the already
quite general class of fractal currents introduced by Züst
in~\cite{Zust}, see Subsection~\ref{subsec:zust}.

\subsection{Related spaces of cochains}
We establish a compactness theorem for fractional currents, extending
the classical Federer-Fleming compactness theorem for normal currents
\cite[4.2.17]{Fede}. This result will play a key role in the
construction of the pushforward operator by Hölder maps.

The proof of the compactness theorem partly relies on
functional-analytical arguments. We show indeed that
$\bF_m^\alpha(K)$, the space of $\alpha$-fractional $m$-currents
supported in a compact subset $K \subset \R^d$, is a dual space
(hence, a form of weak compactness is provided by the
Banach-Alao\u{g}lu theorem). As any predual of a Banach space $X$ is
necessarily embedded in $X^*$, this calls for a description of the
dual space $\bF_m^\alpha(K)^*$. Elements of $\bF_m^\alpha(K)^*$ are
cochains, that are of independent interest. Loosely speaking, they can
be viewed as Hölder versions of Whitney's flat cochains. The next
paragraph provides an alternative description of $\bF_m^\alpha(K)^*$.

An $\alpha$-fractional charge $\omega$ over $K$ is a linear functional
on $\bN_m(K)$, the space of normal $m$-currents supported in $K$, that
satisfies the continuity condition
\begin{equation}
  \label{eq:introCF}
  |\omega(T)| \leq C \bN(T)^{1-\alpha} \bF(T)^\alpha
\end{equation}
for all $T \in \bN_m(K)$. When $K$ is convex, $\omega$ extends by
density to a continuous linear functional on
$\bF_m^\alpha(K)$. Conversely, any element of $\bF_m^\alpha(K)^*$,
restricted to $\bN_m(K)$, satisfies~\eqref{eq:introCF}. Thus, the
space $\bCH^{m, \alpha}(K)$ of $\alpha$-fractional $m$-charges is
(isomorphic to) the dual of $\bF_m^\alpha(K)$ in the case where $K$ is
convex.

One important class of $\alpha$-fractional charges consists of
$\alpha$-Hölder continuous differential forms. More precisely, to each
such form $\eta \in \rmLip^\alpha(K; \wedge^m \R^d)$, one associates
the functional $\Lambda(\eta)$ that acts on normal currents $T \in
\bN_m(K)$ by
\[
\Lambda(\eta)(T) = \int_K \langle \eta(x), \vec{T}(x) \rangle
\diff \| T \|(x).
\]
In the simplest case where $m = 0$, the map $\Lambda$ is in fact a
Banach space isomorphism between $\rmLip^\alpha(K)$ and $\bCH^{0,
  \alpha}(K)$. As a result, we can identify $\alpha$-fractional
$0$-charges over $K$ with $\alpha$-Hölder continuous functions on
$K$. In this case, it is known that $\rmLip^\alpha(K)$ is the double
dual of the little Hölder space, described for example in
\cite[Chapter~4]{Weav}. We treat the general case $m \geq 0$ by
analogy. We define the space of little $\alpha$-fractional $m$-charges
$\bch^{m, \alpha}(K)$ as the closure in $\bCH^{m, \alpha}(K)$ of
$\{\Lambda(\omega_{\mid K}) : \omega \text{ smooth} \}$, and we
establish that $\bF_m^\alpha(K) \cong \bch^{m,\alpha}(K)^*$. Thus any
bounded sequence in $\bF_m^\alpha(K)$ possesses a weakly convergent
subsequence (in the sense of currents), by the Banach-Alao\u{g}lu
theorem. Convergence in flat norm is later obtained by adapting the
classical deformation theorem to the setting of fractional currents.

Our initial motivation for introducing fractional charges was to
provide a generalization in positive codimension of the Hölder charges
introduced in~\cite{BouaDePa} and further studied in~\cite{Boua}.
The reader is referred to the survey~\cite{Bouafia2026nonabsolute} for
an overview of these works, as well as for examples of fractional
charges arising from stochastic processes.

\subsection{Generalized Stokes' formula}
The exterior derivative operator for fractional charges is
well-defined as the adjoint of the boundary operator. Specifically,
$\diff \omega$ is the $\alpha$-fractional $(m+1)$-charge over $K$,
given by
\begin{equation}
  \label{eq:stokesIntro}
  \langle \diff \omega, T \rangle = \langle \omega, \partial T\rangle
\end{equation}
for $T \in \bN_m(K)$. If $K$ is convex, this formula extends more
generally to $\alpha$-fractional currents over $K$.

A particular case of this formula deserves attention, as it results in
a generalized Gauss-Green formula for Hölder vector fields over
fractal boundaries. It extends the work of Harrison and
Norton~\cite{HarrNort} (it should be however stressed that
formula~\eqref{eq:stokesIntro} is purely definitional, contrary to the
more constructive approach taken in~\cite{HarrNort, Guse}). Indeed, we
can take for $U$ a bounded open set with finite $(1-\alpha)$-perimeter
(meaning that $\ind_U \in W_c^{1-\alpha,1}(\R^d)$, or equivalently,
$\llbracket U \rrbracket$ is $\alpha$-fractional). We shall prove that
this condition is weaker than the $(d-1+\alpha)$-summability
(see~\cite[\S{3}]{HarrNort} or Subsection~\ref{subsec:firstexample})
of $\partial U$ that is required in the Gauss-Green formula of
Harrison-Norton.  In this case, formula~\eqref{eq:stokesIntro} applies
for $T = \llbracket U \rrbracket$ and $\omega$ that is associated to
an $\alpha$-Hölder continuous $(d-1)$-form.

\subsection{Pushforward by Hölder maps}
The pushforward of currents by Hölder maps $f$ has been explored in
the literature, notably in \cite{FlanGiacGubiTort} and
\cite{Zust3}. The results in~\cite{Zust3} complement ours, as they
address the case where the components $f_i$ of $f$ have different
Hölder exponents.

Our contribution is to provide a functional-analytical setting to this
construction by showing that $f_\#$ is a bounded linear operator between
spaces of fractional currents. Indeed, in Section~\ref{sec:holderpf},
we define the pushforward $f_\# T$ of an $\alpha$-fractional
$m$-current $T$ by a $\gamma$-Hölder continuous map $f$. The
construction makes sense as long as $m+\alpha < \gamma (m+1)$. Under
this assumption, we show that $f_\#T$ is $\beta$-fractional, where
$\beta$ is given by the equation
\[
\frac{m+\alpha}{\gamma} = m + \beta.
\]
The above inequality provides an interpretation of the parameter
$\alpha$ as an upper bound for the fractional part of the fractal
dimension of both $T$ and its boundary $\partial T$.

In top dimension, \textit{i.e.}, whenever $T$ is a $d$-dimensional
current in $\R^d$ and $f$ is $\R^d$-valued, the pushforward is defined
under the less restrictive condition $d-1 + \alpha <
\gamma(d-1)$. When applied to $T = \llbracket U \rrbracket$, this
pushforward construction allows us to recover the result of De Lellis
and Inauen on the fractional Sobolev regularity of the Brouwer degree
$\operatorname{deg}(f, U, \cdot)$, and even strengthens it slightly by
treating the critical case where $U$ has finite fractional perimeter.

\subsection{Young geometric integration}
Züst introduced in~\cite{Zust2} a notable generalization of the
one-dimensional Young integration to Hölder forms, making sense of
geometric integrals like
\begin{equation}
  \label{eq:zustI}
  \int_{[0, 1]^d} g^0 \diff g^1 \wedge \cdots \wedge \diff g^d
\end{equation}
where $g^0, \dots, g^d$ are Hölder-continuous functions over $[0,
  1]^d$, with Hölder exponents $\alpha_0, \dots, \alpha_d \in
\mathopen{(} 0, 1 \mathclose{]}$ such that
  \begin{equation}
    \label{eq:condZ}
  \alpha_0 + \cdots +
  \alpha_d > d.
  \end{equation}
The integral~\eqref{eq:zustI} is there defined via approximation by
Riemann sums, and the wedge products that appear in~\eqref{eq:zustI}
are purely formal. The condition~\eqref{eq:condZ} was shown to be
sharp for the convergence of these sums; see
\cite[Section~3.2]{Zust2}. In the context of (zero-codimensional)
charges, a generalization of the Züst integral was proposed
in~\cite{Boua}. Recently, T. Jaffard described precisely the
regularity of the differential forms in~\ref{eq:zustI} in~\cite{Jaff},
in terms of negative Hölder spaces.

Our goal in Section~\ref{sec:ext} is to set up a rough exterior
calculus apparatus in which differential forms, such as the ones
in~\eqref{eq:zustI}, are well-defined. In fact, the elements $g^0$,
$\diff g^1, \dots, \diff g^d$ can be seen as fractional charges. We
construct more generally a wedge product between fractional charges:
if $\omega$ and $\eta$ are respectively $\alpha$- and
$\beta$-fractional charges (of any order), we make sense of $\omega
\wedge \eta$ provided that the Young-type condition $\alpha + \beta >
1$ is met. The resulting product $\omega \wedge \eta$ is an $(\alpha +
\beta - 1)$-fractional charge. Using this wedge product, $g^0 \wedge
\diff g^1 \wedge \cdots \wedge \diff g^d$ is then a well-defined
$(\alpha_0 + \cdots + \alpha_d - d)$-fractional charge, which can
therefore be integrated not only over $[0, 1]^d$, but over any
fractional $d$-current. More generally, the wedge product extends the
Züst integral in any codimension.

Recently, Chandra and Singh undertook a similar effort to develop a
rough geometric integration theory in~\cite{ChanSing}. Their approach
is probabilistic in nature, relying on a multidimensional sewing
lemma. Establishing a connection between their space of chains and
cochains and ours is an open question.

\subsection{Outline of the paper}
Section~\ref{sec:prelim} contains preliminaries, mostly on normal and
flat currents. Unlike the introduction, we reverse the order of
exposition: we begin in Section~\ref{sec:fractC} with the general
definition of fractional currents and their fundamental properties
before turning to the top-dimensional case in
Section~\ref{sec:codim0}, where we show the equivalence between
$\bF_d^\alpha(\R^d)$ and $W^{1-\alpha,
  1}_c(\R^d)$. Sections~\ref{subsec:firstexample}
and~\ref{subsec:zust} present examples of fractional currents; they
are not essential for the remainder of the paper.

In Section~\ref{sec:charges}, we study the space of $m$-charges
$\bCH^m(K)$ over a compact $K \subset \R^d$, whose elements are linear
functionals on $\bN_m(K)$ such that for any sequence $(T_n)$ in
$\bN_m(K)$,
\[
\omega(T_n) \to 0 \text{ if } \sup_{n \geq 0} \bN(T_n) < \infty \text{
  and } \bF(T_n) \to 0.
\]
This continuity condition is weaker than~\eqref{eq:introCF}. The
inclusion $\bCH^{m, \alpha}(K) \subset \bCH^m(K)$ can be informally
viewed as analogous to the inclusion $\rmLip^\alpha(K) \subset
C(K)$. This section contains essentially no new results. Fractional
charges, in duality with fractional currents, are then defined in
Section~\ref{sec:fracCharges}.

Our main results are in the subsequent sections: the compactness
theorem and the pushforward construction are found in
Sections~\ref{sec:compact} and~\ref{sec:holderpf}. Young geometric
integration is the object of Section~\ref{sec:ext}. Finally, we show
in Section~\ref{sec:metricCurrents} that fractional currents are
metric currents of a snowflaked version of Euclidean space.  The last
two sections do not depend on Sections~\ref{sec:compact}
and~\ref{sec:holderpf}.

We are grateful to the two anonymous referees for their careful reading and insightful suggestions, and to Philippe Mathieu for his careful proofreading of the manuscript.

\section{Preliminaries}
\label{sec:prelim}

\subsection{Notations}
\label{subsec:notations}
We will work in the Euclidean space $\R^d$ of dimension $d \geq 1$,
endowed with the standard Euclidean norm $| \cdot |$. For any compact
set $K \subset \R^d$ and $\varepsilon > 0$, the compact
$\varepsilon$-tubular neighborhood of $K$ is denoted
\[
B(K, \varepsilon) = \left\{ x \in \R^d: \operatorname{dist}(x, K) \leq
\varepsilon \right\},
\]
where $\operatorname{dist}(x, K)$ denotes the distance between $x$ and
$K$. The Lebesgue (outer) measure is denoted $\calL^d$.

Let $(X, d)$ be a metric space and $(E, \| \cdot \|)$ be a
finite-dimensional normed linear space.  For any $f \colon X \to E$
and $\alpha \in \mathopen{(} 0, 1 \mathclose{]}$, we denote
\[
\rmLip^\alpha f = \sup \left\{ \frac{\|f(x) - f(y)\|}{d(x, y)^\alpha}
: x, y \in X, x \neq y \right\}
\]
and by $\rmLip^\alpha(X; E)$ the space of maps $f \colon X \to E$ such
that $\rmLip^\alpha f < \infty$. We abbreviate $\rmLip^\alpha(X; \R)=
\rmLip^\alpha(X)$.

The space of continuous functions $X \to E$ is denoted $C(X; E)$. We
abbreviate $C(X; \R) = C(X)$. For any $f \in C(X; E)$ and any subset
$K \subset X$, we write
\[
\| f \|_\infty = \sup \left\{ \|f(x)\| : x \in X\right\} \text{ and }
\|f\|_{\infty, K} = \sup \left\{ \| f(x) \| : x \in K \right\}.
\]
Throughout the paper, multiplicative constants are denoted by $C$; their value may change from line to line, and even within the same line.

\subsection{Currents}
We will work within the setting of Federer-Fleming's currents. For an
in-depth exploration of this subject, we refer the reader to the
treatise~\cite{Fede}. In this preliminary part, our focus will be on
defining common notations, highlighting those that deviate from
\cite{Fede}, and revisiting a few definitions.

The spaces of $m$-vectors and $m$-covectors are $\wedge_m \R^d$ and
$\wedge^m \R^d$, and they are respectively given the mass and comass
norm described in \cite[1.8.1]{Fede}, both of which will be denoted
here by $\| \cdot \|$. We will use the bracket notation $\langle
\cdot, \cdot \rangle$ for the duality between $m$-covectors and
$m$-vectors.

All our currents will be compactly supported, defined on $\R^d$, and
their order will be typically denoted by the integer $m$. We recall
that a compactly supported $m$-current is a member of the topological
dual of $C^\infty(\R^d; \wedge^m \R^d)$, the space of smooth
$m$-forms, equipped with the locally convex topology described
in~\cite[4.1.1]{Fede}. The support $\spt T$ of a compactly supported
$m$-current $T$ is the smallest compact subset of $\R^d$ such that
$T(\omega) = 0$ for all smooth $m$-forms supported in $\R^d \setminus
\spt T$.

A sequence of compactly supported currents
$(T_n)$ is said to converge weakly to $T$ whenever $T_n(\omega) \to
T(\omega)$ for every compactly supported smooth $m$-form $\omega$.

For any compactly supported $m$-current $T$ and any smooth $k$-form
$\eta$, where $k \leq m$, we define the compactly supported
$(m-k)$-current $T \hel \eta$ by $(T \hel \eta)(\omega) = T(\eta
\wedge \omega)$.

The mass of a compactly supported current $T$ is
\[
\bM(T) = \sup \left\{ T(\omega) : \omega \in C^\infty(\R^d; \wedge^m
\R^d) \text{ and } \| \omega \|_\infty \leq 1\right\}.
\]
When $\bM(T) < \infty$, the $m$-current $T$ is representable by
integration: there is a unique Radon measure $\|T\|$, a
$\|T\|$-measurable unit $m$-vector field $\vec{T}$, unique
$\|T\|$-almost everywhere, such that for all $\omega \in
C^\infty(\R^d; \wedge^m \R^d)$,
\[
T(\omega) = \int_{\R^d} \langle \omega(x), \vec{T}(x) \rangle \diff \|T\|(x).
\]
Additionally, one has $\spt \|T\| = \spt T$.

In case $m \geq 1$, we define the boundary $\partial T$ to
be the compactly supported current $\omega \mapsto T(\diff
\omega)$. The normal mass of $T$ is
\[
\bN(T) = \bM(T) + \bM(\partial T)
\]
if $m \geq 1$ and $\bN(T) = \bM(T)$ if $m = 0$. A compactly supported
current $T$ is said to be normal whenever $\bN(T) < \infty$, and the
space of normal $m$-currents is denoted $\bN_m(\R^d)$.

Examples of normal $m$-currents are provided by compactly supported
smooth $m$-currents. Those are the currents of the form
\[
\omega \mapsto \int_{\R^d} \langle \omega(x), \xi(x) \rangle \diff \calL^d (x)
\]
where $\xi \colon \R^d \to \wedge_m \R^d$ is a compactly supported
smooth $m$-vector field. Such a current will be denoted $\calL^d \wedge
\xi$.

We define the flat norm of a normal current $T \in \bN_m(\R^d)$ in a
way which departs slightly from Federer's exposure:
\begin{align*}
  \bF(T) & = \sup \left\{ T(\omega) : \omega \in C^\infty(\R^d;
  \wedge^m \R^d) \text{ and } \max \left\{ \| \omega \|_\infty, \|
  \diff \omega \|_\infty \right\} \leq 1 \right\} \\ & = \inf
  \left\{ \bM(S) + \bM(T - \partial S) : S \in \bN_{m+1}(\R^d)
  \right\}.
\end{align*}
The proof of the above equality is similar to
\cite[4.1.12]{Fede}. From the first equality, it is clear that
convergence in flat norm implies weak convergence.

A compactly supported $m$-current $T$ is said to be flat whenever
there is a sequence $(T_n)$ of normal $m$-currents that is
$\bF$-Cauchy, weakly converges to $T$, and such that $\bigcup_n \spt
T_n$ is relatively compact. The flat norm of $T$ is then defined to be
$\bF(T) = \lim \bF(T_n)$. When $T$ is not flat, we set $\bF(T) =
\infty$. The space of flat $m$-currents is denoted by $\bF_m(\R^d)$.

It may be worth noting that, for every $T \in \bF_m(\R^d)$, one has
\begin{equation}
\label{eq:FMN}
\bF(T) \leq \bM(T) \leq \bN(T),
\end{equation}
a fact that will be used repeatedly.

The pushforward $f_\#T$ of a flat $m$-current $T \in \bF_m(\R^d)$ by a
locally Lipschitz map $f \colon \R^d \to \R^{d'}$ is the flat
$m$-current $f_\#T \in \bF_m(\R^{d'})$ defined as
in~\cite[4.1.14]{Fede}. It depends only on the values of $f$ on $\spt
T$, so we may write $f_\#T$ if the domain of $f$ contains $\spt T$ and
the restriction $f_{\mid \spt T}$ is Lipschitz continuous.

For any compact subset $K \subset \R^d$, we write
\begin{align*}
  \bN_m(K) & = \{T \in \bN_m(\R^d) : \spt T \subset K \} \\
  \bF_m(K) & = \{T \in \bF_m(\R^d) : \spt T \subset K \}
\end{align*}
The construction of charges ultimately relies on the Federer-Fleming's
compactness theorem of normal currents in flat norm. The following
version uses the flat norm $\bF$ (rather than $\bF_K$ as in
\cite[4.2.17(1)]{Fede}). It can be easily deduced from the original
version. Alternatively, it is possible to reproduce the arguments in
the proof of Federer-Fleming, as done in
\cite[Theorem~4.2]{DePaMoonPfef}.
\begin{thm}[Compactness]
  Let $K \subset \R^d$ be compact. For all $c \geq 0$, the ball $\{ T
  \in \bN_m(K) : \bN(T) \leq c\}$ is $\bF$-compact.
\end{thm}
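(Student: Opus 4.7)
The plan is to deduce this version directly from the original Federer--Fleming compactness theorem \cite[4.2.17(1)]{Fede}, which asserts the same compactness statement but with respect to the flat seminorm $\bF_K$ relative to $K$, defined by
\[
\bF_K(T) = \inf \left\{ \bM(S) + \bM(T - \partial S) : S \in \bN_{m+1}(\R^d),\ \spt S \subset K \right\}.
\]
The key observation is that the author's $\bF$ admits strictly more competitors $S$ (no support constraint), so for every $T \in \bN_m(K)$ we have the trivial inequality $\bF(T) \leq \bF_K(T)$. Hence $\bF_K$-convergence of a sequence in $\bN_m(K)$ automatically implies $\bF$-convergence.

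Concretely, let $(T_n)$ be a sequence in $\bN_m(K)$ with $\bN(T_n) \leq c$. By Federer's theorem, up to extracting a subsequence, there exists $T \in \bN_m(K)$ with $\bF_K(T_n - T) \to 0$. Since $\bF \leq \bF_K$, the same subsequence satisfies $\bF(T_n - T) \to 0$. This gives an $\bF$-accumulation point inside $\{T \in \bN_m(K) : \bN(T) \leq c\}$, provided we verify that $T$ itself belongs to this set.

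For the limit membership: $\bF_K$-convergence (being stronger than weak convergence) forces $\spt T \subset K$, because testing against a smooth form supported in $\R^d \setminus K$ yields $T(\omega) = \lim T_n(\omega) = 0$. The normal-mass bound $\bN(T) \leq c$ is handled by the lower semicontinuity of $\bM$ and $\bM \circ \partial$ under $\bF$-convergence: if $\bF(T_n - T) \to 0$, then for any smooth $m$-form $\omega$ with $\|\omega\|_\infty \leq 1$ one has $T(\omega) = \lim T_n(\omega) \leq \liminf \bM(T_n)$, and the analogous statement for $\partial T$ testing against $\omega = d\eta$. Taking suprema over $\omega$ (respectively $\eta$) gives $\bM(T) \leq \liminf \bM(T_n)$ and $\bM(\partial T) \leq \liminf \bM(\partial T_n)$, hence $\bN(T) \leq c$.

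There is essentially no obstacle here beyond bookkeeping of definitions: the substance lies in Federer's original theorem, and the present statement is merely its transfer across the inequality $\bF \leq \bF_K$. As the author notes, an alternative route avoiding any reference to $\bF_K$ is to reproduce verbatim the Federer--Fleming deformation-and-polyhedral-approximation argument, working directly with $\bF$ as in \cite[Theorem~4.2]{DePaMoonPfef}; that reproof is longer but fully self-contained.
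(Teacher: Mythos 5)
Your proof is correct and takes exactly the route the paper suggests: you deduce the statement from Federer's original $\bF_K$-compactness theorem via the trivial monotonicity $\bF \leq \bF_K$ that comes from enlarging the set of competitors $S$. The verification that the limit lies in the ball is in fact already furnished by Federer's theorem (which is a compactness statement for that very ball), so your lower-semicontinuity argument is redundant but harmless; the only small remark is that one should note $\bM(T)+\bM(\partial T)\leq \liminf\bM(T_n)+\liminf\bM(\partial T_n)\leq\liminf\bN(T_n)\leq c$ by superadditivity of $\liminf$ rather than adding the two inequalities separately along possibly different subsequences.
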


In this article, convolutions will play an important role in
regularizing various chains or cochains. Here, we simply recall how
convolution works at the level of currents.  The convolution of a
compactly supported $m$-current $T$ with a function $\phi \in
C^\infty_c(\R^d)$ is the compactly supported $m$-current defined by
\[
(T * \phi)(\omega) = T(\check{\phi} * \omega) \qquad \text{for all }
\omega \in C^\infty(\R^d; \wedge^m\R^d)
\]
where $\check{\phi}(x) = \phi(-x)$ for all $x \in \R^d$. 
 
Throughout the article, we fix a $C^\infty$ function $\Phi \colon \R^d
\to \R$ with compact support in the unit ball of $\R^d$, that is
nonnegative and such that $\int_{\R^d} \Phi = 1$. For the sake of
simplicity, we will further assume that $\Phi$ is even.  For all
$\varepsilon > 0$, we define $\Phi_\varepsilon(x) = \varepsilon^{-d}
\Phi(\varepsilon^{-1} x)$.

For the reader's convenience, we collect here several well-known facts concerning normal and flat currents that will be used throughout the paper. 
\begin{prop}
  \label{prop:22}
  Let $K$ be a compact subset of $\R^d$, $f, g \colon K \to \R^{d'}$ be two
  Lipschitz maps and $T \in \bF_m(K)$.
  \begin{itemize}
  \item[(A)] If $m \geq 1$, one has
    \[
    \bN(f_\# T) \leq \max \{ (\rmLip f)^m, (\rmLip f)^{m-1} \} \bN(T).
    \]
    If $m = 0$ then $\bN(f_\#T) \leq \bN(T)$.
  \item[(B)] $\bF(f_\# T) \leq \max \{ (\rmLip f)^m, (\rmLip f)^{m+1}
    \} \bN(T)$.
  \item[(C)] $\spt f_\#T \subset f(\spt T)$.
  \item[(D)] If $m \geq 1$, one has
    \[
    \bF(f_\# T - g_\# T) \leq \|f - g\|_{\infty, K} \max\{ (\rmLip
    f)^m, (\rmLip f)^{m-1}, (\rmLip g)^m, (\rmLip g)^{m-1} \}\bN(T).
    \]
    If $m = 0$ then $\bF(f_\# T - g_\# T) \leq \| f - g \|_{\infty, K}
    \bN(T)$.
  \item[(E)] $\bM(T * \Phi_\varepsilon) \leq \bM(T)$, $\bF(T *
    \Phi_\varepsilon) \leq \bF(T)$ and $\bN(T * \Phi_\varepsilon) \leq
    \bN(T)$.
  \item[(F)] $\bN(T * \Phi_\varepsilon) \leq C \varepsilon^{-1} \bF(T)$ if $\varepsilon \leq 1$.
  \item[(G)] $\bF(T - T * \Phi_\varepsilon) \leq \varepsilon \bN(T)$.
  \item[(H)] $\spt (T * \Phi_\varepsilon) \subset B(\spt T,
    \varepsilon)$.
  \end{itemize}
\end{prop}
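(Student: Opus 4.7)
The plan is to dispatch the eight items by standard dualized computations on test forms, using the two equivalent formulations of $\bF$ recorded above and invoking the cone/homotopy construction whenever an infimum decomposition $T = R + \partial S$ is required. After extending $f$ and $g$ coordinate-wise to globally Lipschitz maps on $\R^d$ (Kirszbraun), I treat the Federer pushforward directly. For \textbf{(A)}, the dual identity $(f_\# T)(\omega) = T(f^\# \omega)$ and the pointwise comass bound $\|f^\# \omega(x)\| \leq (\rmLip f)^m \|\omega(f(x))\|$ give $\bM(f_\# T) \leq (\rmLip f)^m \bM(T)$; for $m \geq 1$ the identity $\partial f_\# T = f_\#(\partial T)$ yields an analogous bound on $\bM(\partial f_\# T)$ with exponent $m-1$, and summing produces the claim (for $m = 0$ only the mass term is present). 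Item \textbf{(C)} is immediate: if a test form $\omega$ is supported in $\R^{d'} \setminus f(\spt T)$, then $f^\# \omega$ vanishes on a neighborhood of $\spt T$. For \textbf{(B)}, I use the infimum form $\bF(T) = \inf\{\bM(R) + \bM(S) : T = R + \partial S\}$; pushing forward yields $f_\# T = f_\# R + \partial f_\# S$, and so $\bF(f_\# T) \leq (\rmLip f)^m \bM(R) + (\rmLip f)^{m+1} \bM(S) \leq \max\{(\rmLip f)^m, (\rmLip f)^{m+1}\}(\bM(R) + \bM(S))$; taking the infimum and then using $\bF(T) \leq \bN(T)$ completes (B).

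Item \textbf{(D)} reduces to the classical affine-homotopy argument. Set $h(s, x) = (1 - s) f(x) + s g(x)$ and $P = \lseg 0, 1 \rseg \times T$; by the product boundary formula and $\partial \circ h_\# = h_\# \circ \partial$,
\[
g_\# T - f_\# T = \partial h_\# P + h_\#(\lseg 0, 1 \rseg \times \partial T),
\]
so $\bF(f_\# T - g_\# T) \leq \bM(h_\# P) + \bM(h_\#(\lseg 0, 1 \rseg \times \partial T))$ by the first formula for $\bF$. To bound $\bM(h_\# P)$, observe that on an $(m+1)$-vector $\partial_s \wedge \xi$ at $(s, x)$ the Jacobian $Dh$ contracts the $\partial_s$ direction to $g(x) - f(x)$, of comass at most $\|f - g\|_{\infty, K}$, while the remaining $m$ tangential directions are pushed by the convex combination $(1 - s) Df + s Dg$, contributing $\max\{(\rmLip f)^m, (\rmLip g)^m\}$. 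The same computation with exponent $m - 1$ handles the boundary term, and for $m = 0$ only the first piece survives.

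Items \textbf{(E)}--\textbf{(H)} are read off the dual formula $(T * \Phi_\varepsilon)(\omega) = T(\check \Phi_\varepsilon * \omega)$ and the commutation $\partial(T * \Phi_\varepsilon) = (\partial T) * \Phi_\varepsilon$. For \textbf{(E)}, the elementary bounds $\|\check \Phi_\varepsilon * \omega\|_\infty \leq \|\omega\|_\infty$ and $\|\diff(\check \Phi_\varepsilon * \omega)\|_\infty = \|\check \Phi_\varepsilon * \diff \omega\|_\infty \leq \|\diff \omega\|_\infty$ control $\bM$, $\bF$, and $\bN$ simultaneously; \textbf{(H)} is the classical support property of convolutions. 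For \textbf{(F)}, pick a near-optimal flat decomposition $T = R + \partial S$ with $\bM(R) + \bM(S) \leq 2 \bF(T)$; then $T * \Phi_\varepsilon = R * \Phi_\varepsilon + \partial (S * \Phi_\varepsilon)$, and writing the smooth convolved currents by their explicit density representations one estimates $\bM(R * \Phi_\varepsilon) \leq \bM(R)$ and $\bM(\partial(S * \Phi_\varepsilon)) \leq \|\nabla \Phi_\varepsilon\|_1 \bM(S) \leq C \varepsilon^{-1} \bM(S)$; the analogous argument applied to $\partial T = \partial R$ controls the boundary mass and summing yields (F). For \textbf{(G)}, rather than attempting to bound $\|\omega - \check \Phi_\varepsilon * \omega\|_\infty$ by $\|\diff \omega\|_\infty$ (which fails, since only the full gradient $\nabla \omega$ controls the modulus of continuity), I build an explicit flat primitive by a cone construction: with $H(s, x, y) = x - s y$ on $[0, 1] \times \R^d \times \R^d$, push the slab $\lseg 0, 1 \rseg \times T \times (\Phi_\varepsilon \calL^d)$ forward by $H$. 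Its boundary equals $T - T * \Phi_\varepsilon$ up to the corrective term $H_\#(\lseg 0, 1 \rseg \times \partial T \times (\Phi_\varepsilon \calL^d))$ of mass $O(\varepsilon \bM(\partial T))$, while the primitive itself has mass $O(\varepsilon \bM(T))$ because the $s$-directional displacement at $(s, x, y)$ is $-y$, of norm at most $\varepsilon$ on the support of $\Phi_\varepsilon$.

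The one genuinely delicate item is \textbf{(G)}: a naive dualization does not close because the flat norm uses $\diff \omega$ rather than the full gradient $\nabla \omega$, and the main technical load is to keep careful track of the Jacobians in the cone above (cf.\ Federer~\cite[4.1.18]{Fede}).
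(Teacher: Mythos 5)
Your proof is essentially correct. The paper itself does not prove Proposition~\ref{prop:22}: it is recorded as a list of ``well-known facts'' with implicit reference to Federer, so there is no argument in the paper to compare against. What you assemble is the standard toolbox---comass bounds, commutation of $\partial$ with pushforward and with convolution, and the affine-homotopy formula---and the estimates all close.

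Two small points are worth tightening. In item~(G), the factor $\Phi_\varepsilon\calL^d$ must be read as a \emph{zero-dimensional} current (a measure in the $y$-variable), so that $\lseg 0,1\rseg\times T\times(\Phi_\varepsilon\calL^d)$ is $(m+1)$-dimensional and its $H$-pushforward can serve as a flat primitive; as written, the notation invites a $d$-dimensional reading which makes no dimensional sense. Also the boundary of that pushforward is $T*\Phi_\varepsilon-T$ rather than $T-T*\Phi_\varepsilon$, but that is only a sign. For item~(F), the decomposition $T=R+\partial S$ is not actually needed: since $\diff(\check{\Phi}_\varepsilon*\omega)=\sum_i\diff x^i\wedge\big((\partial_i\check{\Phi}_\varepsilon)*\omega\big)$ passes the derivative onto the mollifier, one has $\max\{\|\check{\Phi}_\varepsilon*\omega\|_\infty,\|\diff(\check{\Phi}_\varepsilon*\omega)\|_\infty\}\le C\varepsilon^{-1}\|\omega\|_\infty$ for $\varepsilon\le 1$, hence $\bM(T*\Phi_\varepsilon)\le C\varepsilon^{-1}\bF(T)$ directly from the supremum formula for $\bF$; and $\diff(\check{\Phi}_\varepsilon*\diff\omega)=0$ gives the same bound on $\bM(\partial(T*\Phi_\varepsilon))$. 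Your decomposition route is also correct (the boundary term needs $\bF(\partial T)\le\bF(T)$, which you use implicitly), just longer. Items (A)--(E), (H), and the homotopy computation for (D) are exactly as intended, with the understanding that the dual identity $(f_\#T)(\omega)=T(f^\#\omega)$ you invoke in (A) is to be taken as a guide for smooth $f$ and then transferred to Lipschitz $f$ by the usual approximation in the definition of the pushforward.
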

\noindent Specifically, we refer to \cite[4.1.13, 4.1.14]{Fede} for properties (A) to (D), while (E) to (G) follow mainly from \cite[4.1.2]{Fede}. In (F), the constant $C$ depends only on the dimension $d$ (through the choice of $\Phi$). Recall from~\eqref{eq:FMN} that the flat norm $\bF$ is weaker than the norm $\bN$. This provides a useful heuristic for the appearance of the coefficients: an $\varepsilon^{-1}$ factor in~(F), where one estimates a stronger quantity in terms of a weaker one, and an $\varepsilon$ factor in~(G), reflecting the converse situation.

\section{Fractional currents}
\label{sec:fractC}

\subsection{Definition and first properties}
\label{def:frac}

Let $\alpha \in [0, 1]$. We say that a flat $m$-current $T \in
\bF_m(\R^d)$ is \emph{$\alpha$-fractional} whenever one can find a
sequence $(T_k)_{k \geq 0}$ of normal currents satisfying the
following properties:
\begin{itemize}
\item[(A)] there is a compact set $K$ such that $\spt T_k \subset K$
  for all $k$;
\item[(B)] $\sum_{k=0}^\infty \bN(T_k)^{1 - \alpha} \bF(T_k)^\alpha <
  \infty$;
\item[(C)] $T = \sum_{k = 0}^\infty T_k$.
\end{itemize}
We recall that $\bF \leq \bM \leq \bN$, which ensures that
\[
\sum_{k=0}^\infty \bF(T_k) \leq \sum_{k=0}^\infty \bN(T_k)^{1 -
  \alpha} \bF(T_k)^\alpha < \infty
\]
by (B). This guarantees that the series in (C) is in fact
$\bF$-convergent, and it is in this sense that it should be
understood. A sequence $(T_k)$ that satisfies (A), (B) and (C) is
called an \emph{$\alpha$-fractional decomposition} of $T$. This
paragraph makes it clear that any permutation of the sequence $(T_k)$
is still an $\alpha$-fractional decomposition of $T$. In the remainder
of the paper, we may at times consider decompositions indexed over a
countable set other than $\{0, 1,2, \dots\}$.

We will denote the space of $\alpha$-fractional currents by
$\bF_m^\alpha(\R^d)$, and associate to each $T \in \bF_m^\alpha(\R^d)$
the \emph{$\alpha$-fractional norm}
\begin{equation}
  \label{eq:normFa}
  \bF^\alpha(T) = \inf \sum_{k=0}^\infty \bN(T_k)^{1 - \alpha}
  \bF(T_k)^\alpha
\end{equation}
where the infimum is taken over all $\alpha$-fractional decompositions
of $T$ (the fact that $\bF^\alpha$ is a norm will become apparent in
Proposition~\ref{prop:basic}). If $T \in \bF_m(\R^d)$ is not
$\alpha$-fractional, we set $\bF^\alpha(T) = \infty$.

Additionally, for compact sets $K \subset \R^d$, we will denote by
$\bF_m^\alpha(K)$ the space of $\alpha$-fractional currents $T \in
\bF_m^\alpha(\R^d)$ such that $\spt T \subset K$. In case $K$ is
compact convex and $T \in \bF^\alpha(K)$, the infimum
in~\eqref{eq:normFa} can be taken only over $\alpha$-fractional
decompositions $(T_k)$ of $T$ into normal currents that are all
supported in $K$ (as we can replace each $T_k$ by its pushforward by
the orthogonal projection onto $K$). This fact will be often used
implicitly.

The following comments are in order.
\begin{itemize}
\item The boundary cases where $\alpha = 0$ or $\alpha = 1$ correspond
  to well-established classes of currents. Specifically, it can be
  easily verified that $\bF_m^0(\R^d)$ is the space of normal currents
  (with $\bF^0$ being the normal mass $\bN$) while $\bF^1_m(\R^d)$ is
  the space of flat currents (with $\bF^1 = \bF$). As we will observe
  later, many properties of fractional currents hold exclusively for
  $0 < \alpha < 1$, and the extreme cases demand closer scrutiny.
\item The definition of fractional currents can be readily extended to
  the metric setting, with coefficients in a complete normed Abelian
  group, as defined in \cite{DePaHard}. In this context, the sequence
  of normal currents $(T_k)$ is to be replaced with rectifiable
  $G$-chains. However in this work, we focus exclusively on Euclidean
  fractional currents with real coefficients, the only case for which
  a compactness theorem is currently known.
\item We will aim to convince the reader that an $\alpha$-fractional
  current, while being an $m$-dimensional object (in the sense that it
  can be integrated against $m$-forms), is characterized by the
  property that its ``fractal dimension'' (a loosely defined notion)
  is at most $m+\alpha$, and that the fractal dimension of its
  boundary at most $m-1+\alpha$. Thus, the parameter $\alpha$ can be
  understood as representing an upper bound of the fractional part of
  the fractal dimension.
\end{itemize}
In the next proposition, we gather some immediate properties of
fractional currents.

\begin{prop}
  \label{prop:basic}
  Let $K \subset \R^d$ be a compact set. The following hold.
  \begin{itemize}
    \item[(A)] If $\beta$ is such that $\alpha \leq \beta \leq 1$,
      then $\bF_m^\alpha(\R^d) \subset \bF_m^\beta(\R^d)$, and
      $\bF^\beta \leq \bF^\alpha$.
    \item[(B)] For $m \geq 1$, the boundary operator $\partial$ takes
      $\bF_m^\alpha(\R^d)$ to $\bF_{m-1}^\alpha(\R^d)$ and
      $\bF^\alpha(\partial T) \leq \bF^\alpha(T)$ for all $T \in
      \bF_m^\alpha(\R^d)$.
    \item[(C)] $\bF_m^\alpha(\R^d)$ is a linear space, normed by
      $\bF^\alpha$.
    \item[(D)] $(\bF_m^\alpha(K), \bF^\alpha)$ is a Banach space.
    \item[(E)] In case $K$ is convex, $\bN_m(K)$ is dense in
      $\bF^\alpha_m(K)$.
    \item[(F)] If $T \in \bF^\alpha_m(\R^d)$ and $f \colon \spt T \to
      \R^{d'}$ is a Lipschitz map, then $f_{\#}T \in
      \bF^\alpha_m(\R^{d'})$ and
      \begin{equation}
        \label{eq:pushLip}
        \bF^\alpha(f_{\#} T) \leq \max \{ (\rmLip f)^{m-1+\alpha},
        (\rmLip f)^{m+\alpha} \} \bF^\alpha(T)
      \end{equation}
      if $m \geq 1$, and
      \[
      \bF^\alpha(f_\# T) \leq \max \{1, (\rmLip f)^{\alpha} \} \bF^\alpha(T)
      \]
      if $m = 0$.
  \end{itemize}
\end{prop}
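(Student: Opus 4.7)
\textbf{Proof plan for Proposition~\ref{prop:basic}.}

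\emph{Items (A), (B), (C).} These reduce to bookkeeping around the inequality $\bF \leq \bM \leq \bN$. For (A), the identity
\[
\bN(T_k)^{1-\beta}\bF(T_k)^\beta = \bN(T_k)^{1-\alpha}\bF(T_k)^\alpha \cdot \left(\bF(T_k)/\bN(T_k)\right)^{\beta - \alpha}
\]
shows that any $\alpha$-fractional decomposition is simultaneously a $\beta$-fractional decomposition of no greater cost whenever $\alpha \leq \beta$. For (B), the dual definition of the flat norm yields $\bF(\partial T_k) \leq \bF(T_k)$ (if $\|\omega\|_\infty, \|\diff \omega\|_\infty \leq 1$ then $\diff \omega$ is an admissible test form for $T_k$), and clearly $\bN(\partial T_k) = \bM(\partial T_k) \leq \bN(T_k)$; hence an $\alpha$-decomposition $(T_k)$ of $T$ induces one of $\partial T$, namely $(\partial T_k)$, with no greater total cost. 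For (C), the triangle inequality follows by concatenating near-optimal decompositions of $T$ and $T'$ into a single sequence; homogeneity is immediate; and non-degeneracy follows from the termwise bound $\bF(T_k) \leq \bN(T_k)^{1-\alpha}\bF(T_k)^\alpha$, which gives $\bF(T) \leq \bF^\alpha(T)$, so that $\bF^\alpha(T) = 0$ forces $T = 0$.

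\emph{Item (F).} I extend $f$ to a Lipschitz map on all of $\R^d$ with the same constant $L = \rmLip f$ (by Kirszbraun), so that, for a near-optimal $\alpha$-decomposition $(T_k)$ of $T$ with common support $K$, all pushforwards $f_\# T_k$ are normal currents supported in the compact set $f(K)$. Proposition~\ref{prop:22}(A) controls $\bN(f_\# T_k)$, while the dual definition of $\bF$ combined with the pointwise estimates $\|f^\# \omega\|_\infty \leq L^m\|\omega\|_\infty$ and $\|\diff f^\# \omega\|_\infty = \|f^\# \diff \omega\|_\infty \leq L^{m+1}\|\diff \omega\|_\infty$ yields $\bF(f_\# T_k) \leq \max\{L^m, L^{m+1}\}\bF(T_k)$. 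Raising these bounds to the powers $1-\alpha$ and $\alpha$ respectively and multiplying contributes a \emph{uniform} factor $\max\{L^{m-1+\alpha}, L^{m+\alpha}\}$ to each term, and summation gives~\eqref{eq:pushLip}. The case $m = 0$ is the same but uses $\bN(f_\# T_k) \leq \bN(T_k)$ and $\bF(f_\# T_k) \leq \max\{1, L\}\bF(T_k)$.

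\emph{Items (E) and (D).} For (E), assuming $K$ is convex, fix an $\alpha$-decomposition $(T_k)$ of $T \in \bF^\alpha_m(K)$ whose terms are already supported in $K$ (as permitted by the remark following~\eqref{eq:normFa}). The partial sums $\sigma_N \in \bN_m(K)$ and $\bF^\alpha(T - \sigma_N) \leq \sum_{k > N}\bN(T_k)^{1-\alpha}\bF(T_k)^\alpha \to 0$. For (D), I extract from a $\bF^\alpha$-Cauchy sequence in $\bF^\alpha_m(K)$ a subsequence $(S_n)$ with $\bF^\alpha(S_{n+1} - S_n) < 2^{-n}$, and pick $\alpha$-decompositions $(T_k^{(n)})_k$ of $S_{n+1} - S_n$ of total cost below $2^{-n+1}$. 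Since the supports of $T_k^{(n)}$ may drift with $n$, I invoke (F) with a fixed Lipschitz retract $r \colon \R^d \to B(K, 1)$ that is the identity on $K$: then $r_\#(S_{n+1} - S_n) = S_{n+1} - S_n$, and $(r_\# T_k^{(n)})_k$ is an $\alpha$-decomposition of $S_{n+1} - S_n$ whose terms all lie in the compact set $B(K, 1)$, with total cost inflated by at most a uniform constant. Concatenating all these decompositions (together with a single one for $S_0$) yields an $\alpha$-decomposition of $S := \lim S_n$ of finite total cost and with uniform support in $B(K, 1)$; hence $S \in \bF^\alpha_m$. The inequality $\bF(S - S_n) \leq \bF^\alpha(S - S_n) \to 0$ together with $\spt S_n \subset K$ forces $\spt S \subset K$, and completeness is established. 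The only delicate point in the whole proposition is this support bookkeeping in (D): the definition of a fractional decomposition only requires a common compact support specific to that decomposition, so gluing countably many decompositions into one is not automatic, and the Lipschitz retract argument, leveraging the already-established pushforward bound (F), is the key manoeuvre that reduces completeness to a standard Cauchy-series calculation.
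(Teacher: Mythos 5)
Your proposal follows essentially the same strategy as the paper's: items (A)--(C) via the elementary inequalities $\bF\leq\bN$ and their boundary versions, (E) via truncating a decomposition supported in $K$, (F) via a Lipschitz extension and pushing a decomposition term by term, and (D) via a completeness-through-series argument in which the terms of all the chosen decompositions are pushed into a single compact set by a Lipschitz map that is the identity on $K$. The paper phrases (D) with absolutely convergent series rather than a Cauchy subsequence, but these are interchangeable, and the ``common compact support via projection'' step is exactly the paper's remedy for the drifting-support issue you correctly identify as the only subtle point.

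Two small slips deserve a note. In (D) you ask for a ``Lipschitz retract $r\colon\R^d\to B(K,1)$ that is the identity on $K$.'' For general compact $K$, the tubular neighbourhood $B(K,1)$ is typically non-convex (e.g. an annulus if $K$ is a circle), and there need be no Lipschitz map $\R^d\to B(K,1)$ restricting to the identity on $K$, let alone a retraction. The fix, which is what the paper does, is to project onto a closed ball (or the closed convex hull of $K$) containing $K$: nearest-point projection onto a convex compact is $1$-Lipschitz, fixes $K$ pointwise, and has compact image, so the cost of each term does not increase and no appeal to (F) is needed. In (F), the estimate $\bF(f_\# T_k)\leq\max\{L^m,L^{m+1}\}\bF(T_k)$ is correct, but since $f$ is only Lipschitz the pullback $f^\#\omega$ of a smooth form is not a smooth test form, so the ``dual'' (supremum) definition of $\bF$ cannot be applied verbatim; one should instead use the infimum characterization $\bF(T)=\inf\{\bM(S)+\bM(T-\partial S)\}$, write $T=A+\partial S$, and push $A$ and $S$ forward, or else first regularize $f$. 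Neither issue affects the soundness of the overall argument.
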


\begin{proof}
  (A) and (B) are easy consequences of the inequalities
  \[
  \bF(T) \leq \bN(T), \qquad \bF(\partial T) \leq \bF(T), \qquad
  \bN(\partial T) = \bM(\partial T) \leq \bN(T)
  \]
  that hold for any normal current $T$.
  
  (C). As $\bF = \bF^1 \leq \bF^\alpha$ by (A) and $\bF$ is a norm, the
  equality $\bF^\alpha(T) = 0$ readily implies that $T = 0$.

  The set $\bF_m^\alpha(\R^d)$ is closed under addition, for if
  $(T_k)$ and $(S_k)$ are $\alpha$-fractional decompositions of $T$
  and $S$, we can construct the decomposition $(R_k)$ of $T+S$, with
  $R_{2k} = T_k$ and $R_{2k+1} = S_k$. Furthermore, one has
  \[
  \bF^\alpha(T + S) \leq \sum_{k=0}^\infty \bN(R_k)^{1 - \alpha}
  \bF(R_k)^{\alpha} = \sum_{k=0}^\infty \bN(T_k)^{1 - \alpha}
  \bF(T_k)^{\alpha} + \sum_{k=0}^\infty \bN(S_k)^{1 - \alpha}
  \bF(S_k)^{\alpha}
  \]
  Taking the infimum on the right-hand side over all decompositions
  $(T_k)$ and $(S_k)$ yields the triangle inequality for
  $\bF^\alpha$. 

  (D). To prove that $\bF_m^\alpha(K)$ is complete, it is sufficient
  to prove that an $\bF^\alpha$-absolutely convergent series $\sum_n
  T^{(n)}$ actually converges in $\bF^\alpha_m(K)$. The proof of this
  is fairly standard; we do it by considering for each $T^{(n)}$ an
  $\alpha$-fractional decomposition $(T_k^{(n)})_k$ with
  \[
  \sum_{k=0}^\infty \bN(T_k^{(n)})^{1 - \alpha} \bF(T_k^{(n)})^\alpha
  < \bF^\alpha(T^{(n)}) + \frac{1}{2^n}
  \]
  Without loss of generality, we can assume that all of $T_k^{(n)}$
  are supported in the same compact set ($K$ is contained in a closed
  ball, then replace if necessary $T_k^{(n)}$ by its projection onto
  this ball). Then define
  \[
  T = \sum_{n,k} T_k^{(n)}
  \]
  This series is well-defined (it is in fact absolutely convergent in
  flat norm). Clearly, $T$ is $\alpha$-fractional, as
  $(T_{k}^{(n)})_{k,n}$ is an $\alpha$-fractional decomposition, and
  \[
  \bF^\alpha\left(T - \sum_{n=0}^N T^{(n)} \right) \leq
  \sum_{n=N+1}^\infty \left( \bF^\alpha(T^{(n)}) + \frac{1}{2^n}
  \right) \to 0
  \]
  as $N \to \infty$.

  (E). Let $T \in \bF^\alpha_m(K)$ and $(T_k)$ be an
  $\alpha$-fractional decomposition of $T$ such that all $T_k$ are
  supported in $K$. It is clear that
  \[
  \bF^\alpha \left( T - \sum_{k=0}^N T_k \right) \leq
  \sum_{k=N+1}^\infty \bN(T_k)^{1 - \alpha} \bF(T_k)^\alpha \to 0
  \]
  as $N \to \infty$.
  
  (F). First extend $f$ to a Lipschitz map $\R^d \to \R^{d'}$, while
  preserving the Lipschitz constant $L = \rmLip f$. Let $(T_k)$ be an
  $\alpha$-fractional decomposition of $T$. Observe that $(f_\# T_k)$
  is an $\alpha$-fractional decomposition of $f_\# T$. Thus, $f_\#T \in
  \bF^\alpha_m(\R^{d'})$ and, if $m \geq 1$,
  \begin{align*}
    \bF^\alpha(f_\#T) & \leq \sum_{k=0}^\infty \bN(f_\# T_k)^{1 -
      \alpha} \bF(f_\# T_k)^{\alpha} \\ & \leq \sum_{k=0}^\infty \max
    \{ L^{(m-1)(1-\alpha)}, L^{m(1 - \alpha)} \} \bN(T_k)^{1 - \alpha}
    \max \{ L^{m \alpha}, L^{(m+1)\alpha} \} \bF(T_k)^\alpha \\ & \leq
    \max \{ L^{m-1+\alpha}, L^{m+\alpha} \} \sum_{k=0}^{\infty}
    \bN(T_k)^{1 - \alpha} \bF(T_k)^{\alpha}
  \end{align*}
  Taking the infimum over all $\alpha$-fractional decompositions of
  $T$ results in the inequality~\eqref{eq:pushLip}. The case $m = 0$
  is treated similarly.
\end{proof}

\subsection{A first example}
\label{subsec:firstexample}

Many fractal-like objects can be seen as fractional currents. Although
a complete characterization of fractional currents with codimension
$0$ will be provided in Section~\ref{sec:codim0}, we introduce a
preliminary example here. Recall that a \emph{dyadic $k$-cube} (in
short: $k$-cube) is a set of the form
\begin{equation*}
  %\label{eq:kcube}
Q = \prod_{i=1}^d \left[ \frac{\ell_i}{2^k}, \frac{\ell_i+1}{2^k}
  \right]
\end{equation*}
where $k, \ell_1, \dots, \ell_d$ are integers. Let $A \subset \R^d$ be
a bounded subset of $\R^d$. For any integer $k$, we let $N_A(k)$ be
the number of $k$-cubes that intersect $A$. Following~\cite{HarrNort}, we
say that $A$ is \emph{$m$-summable} (where $m \geq 0$ is real) whenever
\[
\sum_{k=0}^\infty N_A(k) 2^{-km} < \infty
\]
Summability is related to the \emph{upper box dimension} of $A$, which is defined by
\[
\dim_{\mathrm{box}} A = \limsup_{k \to \infty} \frac{\log N_A(k)}{k
  \log 2}
\]
Indeed, one can easily check that $A$ is $m$-summable for all $m >
\dim_{\mathrm{box}} A$.

\begin{prop}
  \label{prop:firstexample}
  Let $U \subset \R^d$ be a bounded open set of $\R^d$ such that
  $\partial U$ is $(d-1+\alpha)$-summable, for $\alpha \in [0,
    1]$. Then $\llbracket U \rrbracket$ is $\alpha$-fractional.
\end{prop}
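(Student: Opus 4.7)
I plan to prove the proposition by constructing an explicit $\alpha$-fractional decomposition of $\llbracket U \rrbracket$ based on dyadic approximations of $U$. For each integer $k \geq 0$, let $U_k$ denote the union of all closed dyadic $k$-cubes entirely contained in $U$; since $U$ is bounded, $U_k$ is a finite union of cubes, so $\llbracket U_k \rrbracket$ is a normal current and $U_0 \subset U_1 \subset \cdots \subset U$. I set $T_{-1} = \llbracket U_0 \rrbracket$ (possibly $0$) and $T_k = \llbracket U_{k+1} \rrbracket - \llbracket U_k \rrbracket$ for $k \geq 0$, noting that each $T_k$ is a signed sum of at most $2^d N_{\partial U}(k)$ unit cube currents at scale $2^{-(k+1)}$: indeed, a $(k+1)$-cube $Q \subset U_{k+1} \setminus U_k$ has a parent $k$-cube that is not contained in $U$ but meets $U$, hence meets $\partial U$. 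All $T_k$ are supported in $\overline{U}$, a common compact set.

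The key estimates are then purely combinatorial. For the flat norm, I bound
\[
\bF(T_k) \leq \bM(T_k) = \calL^d(U_{k+1} \setminus U_k) \leq 2^d N_{\partial U}(k) \cdot 2^{-(k+1)d} \leq C\, N_{\partial U}(k)\, 2^{-kd},
\]
while for the normal mass, summing boundary contributions from each $(k+1)$-cube in $U_{k+1} \setminus U_k$ gives
\[
\bM(\partial T_k) \leq 2^d N_{\partial U}(k) \cdot 2d \cdot 2^{-(k+1)(d-1)} \leq C'\, N_{\partial U}(k)\, 2^{-k(d-1)},
\]
hence $\bN(T_k) \leq C''\, N_{\partial U}(k)\, 2^{-k(d-1)}$. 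Interpolating,
\[
\bN(T_k)^{1-\alpha}\bF(T_k)^\alpha \leq C'''\, N_{\partial U}(k)\, 2^{-k[(d-1)(1-\alpha) + d\alpha]} = C'''\, N_{\partial U}(k)\, 2^{-k(d-1+\alpha)},
\]
which is summable precisely by the $(d-1+\alpha)$-summability of $\partial U$.

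It remains to verify that $\llbracket U \rrbracket = T_{-1} + \sum_{k=0}^\infty T_k$. The partial sums telescope to $\llbracket U_{N+1} \rrbracket$, and $\bM(\llbracket U \rrbracket - \llbracket U_{N+1} \rrbracket) = \calL^d(U \setminus U_{N+1})$ is bounded by the total volume of $(N+1)$-cubes meeting $\partial U$, i.e.\ by $N_{\partial U}(N+1)\cdot 2^{-(N+1)d}$; since $N_{\partial U}(k)\, 2^{-k(d-1+\alpha)} \to 0$ by summability, this tends to $0$, giving mass convergence and a fortiori flat convergence. Combined with the summability of $\sum_k \bN(T_k)^{1-\alpha}\bF(T_k)^\alpha$, the sequence $(T_{-1}, T_0, T_1, \dots)$ is an $\alpha$-fractional decomposition of $\llbracket U \rrbracket$.

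The main subtlety, and the only place where care is required, is the bound on $\bM(\partial T_k)$: the naive estimate via $\bM(\partial \llbracket U_{k+1}\rrbracket) + \bM(\partial \llbracket U_k\rrbracket)$ is useless since these perimeters are not controlled by $N_{\partial U}(k)$ (they can accumulate from earlier scales). The clean fix is to recognize $T_k$ as an integer combination of at most $2^d N_{\partial U}(k)$ individual $(k+1)$-cube currents and to bound $\bM(\partial T_k)$ face-by-face, which introduces a lot of cancellation waste but still yields the correct exponent $d-1$ on the scale factor, so that the interpolation exponent $(d-1)(1-\alpha) + d\alpha = d-1+\alpha$ matches the summability hypothesis exactly.
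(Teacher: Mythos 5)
Your proof is correct and follows essentially the same approach as the paper's: a dyadic cube decomposition of $U$ with the key estimate driven by the count $N_{\partial U}(k)$ of cubes meeting $\partial U$, combined with the observation that each cube contributes $\sim 2^{-k(d-1)}$ to the normal mass and $\sim 2^{-kd}$ to the flat norm, so interpolation produces exactly the exponent $d-1+\alpha$. The only difference is cosmetic: the paper uses a Whitney decomposition (maximal $k$-cubes $Q$ with $Q$ and all its $3^d-1$ neighbors contained in $U$), whereas you use the simpler dyadic-shell decomposition $T_k = \llbracket U_{k+1}\rrbracket - \llbracket U_k \rrbracket$ with $U_k$ the union of all $k$-cubes contained in $U$; your variant makes the counting argument (parent meets $\partial U$) a one-liner and, unlike the paper, you verify the flat convergence $\llbracket U\rrbracket = \sum_k T_k$ explicitly.
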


\begin{proof}
  This proof is heavily influenced by \cite[Section~3]{HarrNort} and
  \cite[Lemma~4.4]{Zust}. We assume that $U$ is nonempty, otherwise
  there is nothing to prove. The main ingredient will be the Whitney
  decomposition of $U$, that we now describe.

  As $U$ is bounded and nonempty, there is a smallest integer $k_0
  \geq 0$ such that some $k_0$-cube and its $3^d-1$ neighbors are
  contained in $U$. Let $\calQ_{k_0}$ be the collection of such
  $k_0$-cubes. Then we define inductively the collections
  $\calQ_{k_0+1}, \calQ_{k_0+2}, \dots$, where for all $k \geq k_0 +
  1$, the collection $\calQ_k$ consists of all $k$-cubes $Q$ such that
  \begin{itemize}
  \item[(A)] $Q$ and all of its $3^d - 1$ neighbors are contained in $U$;
  \item[(B)] $Q$ is not contained in any cube of $\calQ_\ell$, for
    some $k_0 \leq \ell < k$.
  \end{itemize}
  Let $k \geq k_0 + 1$. By construction, the parent $(k-1)$-cube of
  any $Q \in \calQ_k$, or one of its neighbors, intersects $\partial
  U$. Therefore,
  \[
  \operatorname{Card} \calQ_k \leq 3^d \times 2^d N_{\partial U}(k-1)
  \leq C N_{\partial U}(k-1)
  \]
  Note that $\calQ_{k_0}$ is finite as well by boundedness of $U$. We
  define, for each $k \geq k_0$, the normal current
  \[
  T_k = \sum_{Q \in \calQ_k} \llbracket Q \rrbracket
  \]
  Moreover, one clearly has
  \begin{gather*}
    \bN(T_k) \leq \operatorname{Card} \calQ_k \left( 2^{-kd} + 2d 2^{-k(d-1)} \right) \leq C \left(\operatorname{Card} \calQ_k\right) 2^{-k(d-1)} \\
    \bF(T_k) \leq \bM(T_k) \leq \left(\operatorname{Card} \calQ_k\right) 2^{-kd}
  \end{gather*}
  Consequently,
  \[
  \sum_{k=k_0}^\infty \bN(T_k)^{1-\alpha} \bF(T_k)^{\alpha} \leq C
  \sum_{k=k_0}^\infty (\operatorname{Card} \calQ_k)
  2^{-k(d-1+\alpha)} < \infty
  \]
  Hence the sequence of normal currents $(T_k)$ satisfies
  condition~(B) of Definition~\ref{def:frac}.
\end{proof}

\subsection{Züst's fractal currents}
\label{subsec:zust}

In this subsection, we exhibit another class of currents that are
proven to be $\alpha$-fractional. It was introduced
in~\cite[Definition~4.1]{Zust}. The definition is adapted here to the
Euclidean setting, using Federer-style currents instead of metric
currents.

Let $m \leq \gamma < m+1$ and $m-1 \leq \delta < m$. An
$m$-dimensional flat current $T \in \bF_m(\R^d)$ is said to be
\emph{$(\gamma,\delta)$-fractal} in the sense of Züst whenever there
are sequences $(R_k)$ in $\bN_{m}(\R^d)$ and $(S_k)$ in
$\bN_{m+1}(\R^d)$, and parameters $\sigma, \rho > 1$ (depending on $T$) such that
\begin{itemize}
\item[(A)] there is a compact set $K \subset \R^d$ such that the
  supports of $R_k$, $S_k$ all lie in $K$;
\item[(B)] one has
  \begin{gather*}
  \sum_{k=0}^\infty \bM(S_k) \sigma^{k(m+1-\gamma)} < \infty, \qquad
  \sum_{k=0}^\infty \bM(\partial S_k) \sigma^{k(m-\gamma)} < \infty
  \\ \sum_{k=0}^\infty \bM(R_k) \rho^{k(m-\delta)} < \infty, \qquad
  \sum_{k=0}^\infty \bM(\partial R_k) \rho^{k(m-1-\delta)} < \infty
  \end{gather*}
\item[(C)] $T = \sum_{k=0}^\infty (R_k + \partial S_k)$ weakly.
\end{itemize}
A helpful way to think about $T$ is as having a fractal dimension less
than $\gamma$, while its boundary can be seen as having a fractal
dimension less than $\delta$. One limitation of Züst’s fractal currents, compared to fractional currents, is that they do not form a linear space. Fractional currents are more general, as we now show:
\begin{prop}
A $(\gamma, \delta)$-fractal current (in the sense of Züst) is $\alpha$-fractional for $\alpha = \max \{\gamma - m, \delta - (m-1) \}$.
\end{prop}

\begin{proof}
Let $T$ be a $(\gamma, \delta)$-fractal current. We adopt the notations introduced at the beginning of the subsection.
First we estimate
\begin{align*}
\bN(\partial S_k)^{1 - \alpha} \bF(\partial S_k)^{\alpha} & \leq
\bM(\partial S_k)^{1-\alpha} \bM(S_k)^{\alpha} \\
& \leq \left( \bM(\partial S_k) \sigma^{-k(m+1-\gamma)\alpha/(1-\alpha) } \right)^{1-\alpha}  \left( \bM(S_k) \sigma^{k(m+1-\gamma)} \right)^{\alpha} \\ & \leq (1 - \alpha)
\bM(\partial S_k) \sigma^{-k(m+1-\gamma)\alpha/(1-\alpha) } + \alpha \bM(S_k)
\sigma^{k(m+1-\gamma)}
\end{align*}
by Young's inequality. Using $\alpha \geq \gamma - m$ and $\sigma > 1$,
one further obtains
\begin{equation}
  \label{eq:estimDSk}
\bN(\partial S_k)^{1 - \alpha} \bF(\partial S_k)^{\alpha} \leq (1 - \alpha)
\bM(\partial S_k) \sigma^{k(m-\gamma) } + \alpha \bM(S_k)
\sigma^{k(m+1-\gamma)}
\end{equation}
Similarly,
\begin{align}
  \bN(R_k)^{1-\alpha} \bF(R_k)^\alpha & \leq (1-\alpha) \bN(R_k)
  \rho^{-k(m-\delta) \alpha/(1-\alpha)} + \alpha \bF(R_k) \rho^{k(m-
    \delta)} \notag \\ & \leq (1-\alpha) \bN(R_k) \rho^{k(m-1-\delta)}
  + \alpha \bF(R_k) \rho^{k(m- \delta)} \notag \\ & \leq \bM(R_k)
  \rho^{k(m-\delta)}\left( \alpha + \frac{1-\alpha}{\rho^k} \right) +
  (1-\alpha)\bM(\partial R_k) \rho^{k(m-1-\delta)} \label{eq:estimRk}
\end{align}
The inequalities~\eqref{eq:estimDSk}, \eqref{eq:estimRk} and (B) show
that $(T_k)$, where $T_{2k} = \partial S_k$ and $T_{2k+1} = R_k$, is
an $\alpha$-fractional decomposition of $T$. Hence $T \in
\bF^\alpha_m(\R^d)$. This shows, by the way, that the sum in~(C) is
$\bF$-convergent.
\end{proof}

\section{Fractional Sobolev regularity of zero-codimensional fractional currents}
\label{sec:codim0}

It is well-known that normal currents of top dimension have the form
$\llbracket u \rrbracket$, where $u$ is a compactly supported $BV$
function, that is, a $L^1_c$ function such that the De Giorgi
variation
\[
\|Du\|(\R^d) = \sup \left\{ \int_{\R^d} u \operatorname{div} v : v \in
C^1_c(\R^d; \R^d) \text{ and } \|v\|_\infty \leq 1 \right\}
\]
is finite, in which case $\|Du\|(\R^d) = \bM(\partial \llbracket u
\rrbracket)$, see \cite[Section~4.5]{Fede}. As for flat currents, they are the currents of the form
$\llbracket u \rrbracket$, where $u$ is a compactly supported
integrable function, see \cite[4.1.18]{Fede}. We will characterize in Theorem~\ref{thm:topdim}
the functions $u$ such that $\llbracket u \rrbracket \in
\bF^\alpha_d(\R^d)$ as members of a fractional Sobolev
space. \textbf{We will assume throughout this section that $0 < \alpha
  < 1$.}

\subsection{Fractional Sobolev space}
Here, we will simply recall the definition of fractional Sobolev
spaces, referring the reader to~\cite{DiNePalaVald} for a
comprehensive discussion on the subject.

The \emph{$W^{1-\alpha,1}$-norm} of an $L^1$ function $u \colon \R^d
\to \R$ is defined to be
\[
\| u \|_{W^{1-\alpha, 1}(\R^d)} = \int_{\R^d} \int_{\R^d} \frac{|u(x)
  - u(y)|}{|x - y|^{d+1-\alpha}} \diff x \diff y.
\]
This quantity defines a genuine norm (and not merely a seminorm) on
$L^1(\R^d)$, since the only constant function belonging to $L^1(\R^d)$
is the zero function.

In the case where $u$ is the indicator function of a Lebesgue
measurable set $A$, we call it the \emph{$(1-\alpha)$-fractional
  perimeter} of $A$:
\[
\operatorname{Per}_{1 - \alpha}(A) = \| \ind_A
\|_{W^{1-\alpha,1}(\R^d)} = 2 \int_A \int_{\R^d \setminus A}
\frac{\diff x \diff y}{| x - y |^{d + 1 - \alpha}}
\]
We let $W^{1-\alpha,1}_c(\R^d)$ be the space of all compactly
supported integrable functions $u$ such that $\|u
\|_{W^{1-\alpha,1}(\R^d)} < \infty$.

%We will also make use of the fractional Sobolev space
%$W^{1-\alpha,1}((0, 1)^d)$ over the open cube $(0, 1)^d$, which
%consists of the functions $u \in L^1((0, 1)^d)$ for which the
%semi-norm
%\[
%[u]_{W^{1-\alpha,1}((0, 1)^d)} = \int_{(0,1)^d} \int_{(0, 1)^d}
%\frac{|u(x) - u(y)|}{|x - y|^{d+1-\alpha}} \diff x \diff y
%\]
%is finite. The space $W^{1-\alpha,1}(\R^d)$ is normed by
%\[
%\|u\|_{W^{1-\alpha,1}((0,1)^d)} = \|u\|_1 + [u]_{W^{1-\alpha,1}((0, 1)^d)}
%\]
%
%For each integer $k \geq 0$, we let $\calP_k$ be the collection of the
%$2^k$ dyadic cubes of sidelength $2^{-k}$ and $E_k \colon L^1((0,
%1)^d) \to L^1((0, 1)^d)$ that maps $u$ be the function $E_k(u) \colon
%(0, 1)^d \to \R$ that is equal on each $Q \in \calP_k$ to the average
%of $u$ on $Q$
%\[
%E_k(u) = \sum_{Q \in \calP_k} \left( \dashint_Q u \right) \ind_{Q}
%\]
%(\textit{i.e} $E_k(u)$ is the conditional expectation of $u$ with
%respect to the $\sigma$-algebra generated by the dyadic cubes of
%sidelength $2^{-k}$). The following result is due to G. Bourdaud
%\cite{Bour}. It will be needed in the course of the proof of
%Theorem~\ref{thm:topdim}. It is a sort of discrete Littlewood-Paley
%characterization of $W^{1-\alpha, 1}((0, 1)^d)$. For a proof, we refer
%to the paper \cite[Theorem~A.1]{BourBrezMiro}.
%
%\begin{thm}
%  \label{thm:WLP}
%  There is a constant $C > 0$ such that, for all $u \in L^1((0, 1)^d)$, on%e has
%  \[
%  \frac{[u]_{W^{1-\alpha,1}((0, 1)^d)}}{C} \leq \sum_{k=1}^\infty 2^{(1
%      - \alpha)k} \| E_k(u) - E_{k-1}(u) \|_1 \leq C
%    [u]_{W^{1-\alpha,1}((0, 1)^d)}
%  \]
%\end{thm}

\subsection{A norm equivalent to $\bF^\alpha$}
We observe that the $W^{1-\alpha,1}$-norm behaves well under
rescaling. Indeed,
\[
\|u \circ \varphi_r^{-1}\|_{W^{1-\alpha,1}(\R^d)} = r^{d-1+\alpha} \|u
\|_{W^{1-\alpha,1}(\R^d)}
\]
for $r > 0$, where $\varphi_r(x) = r x$ for all $x \in \R^d$.

On the space $\bF_d^\alpha(\R^d)$ of top-dimensional $\alpha$-fractional currents, it will be more convenient to work with a
norm that has better homogeneity properties than $\bF^\alpha$. We
define
\[
\tilde{\bF}^\alpha(T) = \inf \sum_{k=0}^\infty \bM(\partial T_k)^{1 -
  \alpha} \bM(T_k)^\alpha
\]
where the infimum is taken over all $\alpha$-fractional decompositions
of $T$. One checks that $\tilde{\bF}^\alpha(\varphi_{r\#} T) =
r^{d-1+\alpha} \tilde{\bF}^\alpha(T)$.

The norms $\bF^\alpha$ and $\tilde{\bF}^\alpha$ are equivalent, in a
sense that we now describe. Of course, one has $\tilde{\bF}^\alpha
\leq \bF^\alpha$, as $\bF = \bM$ for top-dimensional currents. As for
the converse inequality, we will prove that
\begin{equation}
  \label{eq:equivFFtilde}
\bF^\alpha(T) \leq C(K) \tilde{\bF}^\alpha(T) \qquad
\text{for all } T \in \bF^\alpha_d(K)
\end{equation}
where $K \subset \R^d$ is compact and $C(K)$ is a constant depending
only on $K$.

We lose no generality in assuming that $K$ is convex, as we can
replace it with its closed convex hull. Consider an
$\alpha$-fractional decomposition $(T_k)$ of $T$, with $\spt T_k
\subset K$ for all $k$. Each $T_k = \llbracket u_k \rrbracket$ is
represented by a $BV$ function $u_k$, supported in $K$.  By the Hölder
and Sobolev inequality (see \cite[Theorem~5.10(i)]{EvanGari}), one has
\[
\bM(T_k) = \|u_k\|_1 \leq \|u_k\|_{d/(d-1)} |K|^{1/d} \leq C \|D u_k\|(\R^d)
|K|^{1/d}
\]
where $C$ denotes here the Sobolev constant. Hence
\[
\bN(T_k) \leq (1 + C|K|^{1/d}) \|D u_k\|(\R^d)
\]
and
\[
\bN(T_k)^{1-\alpha} \bF(T_k)^\alpha \leq \left( 1 + C |K|^{1/d}
\right)^{1-\alpha} \bM(\partial T_k)^{1-\alpha} \bM(T_k)^\alpha
\]
Summing over $k$ and then passing to the infimum
yields~\eqref{eq:equivFFtilde}.

\begin{thm}
  \label{thm:topdim}
  Let $u$ be a compactly supported integrable function. The current
  $\llbracket u \rrbracket$ is $\alpha$-fractional if and only if $u
  \in W^{1 - \alpha, 1}_c(\R^d)$, in which case
  \[
  \frac{1}{C} \tilde{\bF}^\alpha(\llbracket u \rrbracket)
  \leq\|u\|_{W^{1-\alpha, 1}(\R^d)} \leq C
  \tilde{\bF}^\alpha(\llbracket u \rrbracket)
  \]
  for some constant $C = C(d, \alpha) > 0$.
\end{thm}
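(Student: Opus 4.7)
The plan hinges on the identification of top-dimensional normal currents with compactly supported $BV$ functions: every $T \in \bN_d(\R^d)$ takes the form $T = \llbracket u \rrbracket$ with $\bM(T) = \|u\|_1$ and $\bM(\partial T) = \|Du\|(\R^d)$. Moreover, since $\bF = \bM$ on $\bF_d(\R^d)$, $\bF$-convergent series of top-dimensional currents correspond to $L^1$-convergent series of their representatives.

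\emph{Upper bound for the Gagliardo norm.} For this easier half I would take an arbitrary $\alpha$-fractional decomposition $\llbracket u \rrbracket = \sum_k \llbracket u_k \rrbracket$ with each $u_k \in BV_c(\R^d)$, and apply termwise the interpolation inequality recalled in the introduction:
\[
\|u_k\|_{W^{1-\alpha,1}} \leq C \|Du_k\|(\R^d)^{1-\alpha} \|u_k\|_1^\alpha = C \bM(\partial \llbracket u_k \rrbracket)^{1-\alpha} \bM(\llbracket u_k \rrbracket)^\alpha.
\]
Since $\sum u_k \to u$ in $L^1$, Fatou's lemma passes subadditivity of the Gagliardo seminorm to the infinite sum, giving $\|u\|_{W^{1-\alpha,1}} \leq C \sum_k \bM(\partial \llbracket u_k \rrbracket)^{1-\alpha} \bM(\llbracket u_k \rrbracket)^\alpha$. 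Infimizing over decompositions produces the bound.

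\emph{Converse via a scale-adapted Littlewood--Paley tower.} For the reverse inequality I would explicitly construct an $\alpha$-fractional decomposition of $\llbracket u \rrbracket$. Pick $R$ with $\spt u \subset B(0,R)$, let $\varepsilon_k = 2^{-k}R$, and set $v_0 = u * \Phi_R$ and $v_k = u * \Phi_{\varepsilon_k} - u * \Phi_{\varepsilon_{k-1}}$ for $k \geq 1$; then $u = \sum_k v_k$ in $L^1$, and every $v_k$ is smooth and supported in the common compact set $B(0,2R)$. For $k \geq 1$, the kernel $\psi_k = \Phi_{\varepsilon_k} - \Phi_{\varepsilon_{k-1}}$ and its derivative $D\psi_k$ have vanishing zeroth moments, so rewriting
\[
v_k(x) = \int \bigl(u(x-y)-u(x)\bigr)\psi_k(y) \diff y, \qquad D v_k(x) = \int \bigl(u(x-y)-u(x)\bigr)D\psi_k(y) \diff y
\]
yields $\|v_k\|_1 \leq C \varepsilon_k^{-d} I_k$ and $\|Dv_k\|_1 \leq C \varepsilon_k^{-d-1} I_k$, where $I_k = \int_{|y| \leq 2\varepsilon_k}\|u(\cdot-y)-u\|_1 \diff y$. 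Interpolation gives $\|Dv_k\|_1^{1-\alpha}\|v_k\|_1^\alpha \leq C\varepsilon_k^{-(d+1-\alpha)} I_k$, and a Fubini/dyadic summation collapses the series in $k \geq 1$ into an integral dominated by $\|u\|_{W^{1-\alpha,1}}$.

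\emph{Main obstacle.} The delicate step is the low-frequency term $v_0 = u * \Phi_R$, which has no vanishing moments; trivially one only gets $\|Dv_0\|_1^{1-\alpha}\|v_0\|_1^\alpha \leq C R^{-(1-\alpha)} \|u\|_1$. To eliminate the residual $R$-dependence I would exploit the elementary observation that $\|u(\cdot-y)-u\|_1 = 2\|u\|_1$ whenever $|y|>2R$; integrating against $|y|^{-d-1+\alpha}$ over $\{|y|>2R\}$ produces the support-based Sobolev-type inequality $\|u\|_1 \leq C(d,\alpha)R^{1-\alpha}\|u\|_{W^{1-\alpha,1}}$, which precisely absorbs the $R^{-(1-\alpha)}$ factor. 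This is what ensures that the final constant depends only on $d$ and $\alpha$ and not on the support of $u$.
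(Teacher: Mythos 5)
Your proof is correct, and it takes a genuinely different route from the paper in the decisive direction.  For the inequality $\|u\|_{W^{1-\alpha,1}} \leq C\tilde{\bF}^\alpha(\llbracket u\rrbracket)$ you and the paper argue identically: apply the $BV\!-\!L^1$ interpolation inequality (which the paper proves from scratch in its Steps 1--2, whereas you take it as given) termwise, and pass to the $L^1$-limit by Fatou/a.e.~subsequential convergence; the paper does the same.  The genuine difference is in building a decomposition from $u \in W^{1-\alpha,1}_c$.  The paper rescales $\spt u$ into $[0,1]^d$, takes $v_k$ to be the conditional expectation of $u$ over dyadic $k$-cubes, and sets $u_k = v_k - v_{k-1}$; the key input is the elementary cube-by-cube bound $\|Du_k\|(\R^d) \leq C 2^k \|u_k\|_1$ plus a Fubini argument over the nested family $\{Q,\hat Q\}$.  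You instead take mollifier differences $v_k = u*(\Phi_{\varepsilon_k}-\Phi_{\varepsilon_{k-1}})$ and exploit the vanishing zeroth moment of the kernel $\psi_k = \Phi_{\varepsilon_k}-\Phi_{\varepsilon_{k-1}}$ (and of $D\psi_k$) to express both $v_k$ and $Dv_k$ as integrals against the difference $u(\cdot-y)-u$, so that $\|v_k\|_1$ and $\|Dv_k\|_1$ are controlled by the \emph{same} quantity $I_k$; the dyadic summation collapse of $\sum_k \varepsilon_k^{-(d+1-\alpha)}I_k$ into the Gagliardo integral is the analogue of the paper's estimate of $\sum_k 2^{(1-\alpha)k}\|u_k\|_1$.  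Your treatment of the zero-frequency piece ($v_0$) is essentially the same idea the paper uses --- both exploit that translates of $u$ by $|y|>2R$ are disjoint from $u$ to get $\|u\|_1 \leq CR^{1-\alpha}\|u\|_{W^{1-\alpha,1}}$ --- but you keep $R$ explicit rather than rescaling to $[0,1]^d$.  Each version has advantages: the paper's martingale decomposition is more elementary (the pieces are piecewise constant indicators of cubes, no smooth bump needed), while your convolution decomposition is more portable (it uses the regularization kernel $\Phi_\varepsilon$ already running through the rest of the paper, and avoids any commitment to a dyadic lattice, which would be relevant for metric generalizations).  One small point worth noting for completeness: to conclude that $(v_k)$ really is an $\alpha$-fractional decomposition you must verify condition~(B) of Definition~\ref{def:frac}, i.e.\ $\sum_k \bN(\llbracket v_k\rrbracket)^{1-\alpha}\bF(\llbracket v_k\rrbracket)^\alpha < \infty$, not merely $\sum_k \|Dv_k\|_1^{1-\alpha}\|v_k\|_1^\alpha < \infty$; since all $v_k$ are supported in $B(0,2R)$, the Sobolev inequality (as used by the paper in proving~\eqref{eq:equivFFtilde}) supplies $\|v_k\|_1 \leq C(R)\|Dv_k\|_1$, which makes the two conditions equivalent up to an $R$-dependent constant --- harmless, since the $\tilde{\bF}^\alpha$ bound itself is $R$-free.
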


\begin{proof}
  \textbf{Step 1. For $u \in C^1(\R^d) \cap L^1(\R^d)$, we have
    $\|u\|_{W^{1-\alpha, 1}(\R^d)} \leq C\|D u\|(\R^d)^{1 - \alpha}
    \|u\|_1^\alpha$, where $C = C(d, \alpha)$ is a constant.}

  To our knowledge, this step and the next one were first proven
  by Brasco, Lindgren and Parini in~\cite[Proposition~4.2]{BrasLindPari}. For the reader's convenience, we reproduce their proof.
  
  We can assume
  that $u \neq 0$ and $\|D u\|(\R^d)< \infty$. Let $\eta > 0$ be a real
  number, to be chosen adequately later. By a change of variables,
  \[
  \|u\|_{W^{1-\alpha,1}(\R^d)} = \int_{\R^d} \int_{\R^d} \frac{|u(x) -
    u(x - z)|}{|z|^{d+1-\alpha}} \diff x \diff z
  \]
  We then split
  \begin{multline*}
  \|u\|_{W^{1 - \alpha, 1}(\R^d)} = \int_{ \{|z| \leq \eta \} }
  \int_{\R^d} \frac{|u(x) - u(x - z)|}{|z|^{d+1-\alpha}} \diff x \diff
  z + \\ \int_{ \{|z| > \eta \} } \int_{\R^d} \frac{|u(x) - u(x -
    z)|}{|z|^{d+1-\alpha}} \diff x \diff z
  \end{multline*}
  The second term in the right-hand side is controlled by
  \[
  \int_{ \{|z| > \eta \} } \int_{\R^d} \frac{|u(x) - u(x -
    z)|}{|z|^{d+1-\alpha}} \diff x \diff z \leq \int_{ \{ |z| > \eta
    \} } \frac{2 \|u\|_1 }{|z|^{d+1 - \alpha}} \diff z =
  C \eta^{\alpha - 1} \|u\|_1
  \]
  As for the first term, we estimate
  \[
  |u(x) - u(x-z)| \leq |z| \int_0^1 | \nabla u(x - tz) | \diff t
  \]
  Thus, by Fubini's theorem,
  \begin{align*}
    \int_{ \{|z| \leq \eta \} } \int_{\R^d} \frac{|u(x) - u(x -
      z)|}{|z|^{d+1-\alpha}} \diff x \diff z & \leq \int_{ \{|z| \leq
      \eta \} } \int_{\R^d} \int_0^1 \frac{|\nabla u(x -
      tz)|}{|z|^{d-\alpha}} \diff t \diff x \diff z \\ & \leq \int_{
      \{|z| \leq \eta \} } \int_0^1 \int_{\R^d} \frac{|\nabla u(x -
      tz)|}{|z|^{d-\alpha}} \diff x \diff t \diff z \\ & = \int_{
      \{|z| \leq \eta \} } \frac{\|D u\|(\R^d)}{|z|^{d-\alpha}} \diff z \\ & =
    C \eta^\alpha \|D u\|(\R^d)
  \end{align*}
  Summing up, we find
  \[
  \|u\|_{W^{1-\alpha,1}(\R^d)} \leq C \left( \eta^\alpha \| D
  u\|(\R^d) + \eta^{\alpha - 1} \|u\|_1 \right).
  \]
  Choosing $\eta = \|u\|_1 / \|D u\|(\R^d)$, we finish the proof of
  the first step.

  \textbf{Step 2. For $u \in BV(\R^d)$, we have $\|u\|_{W^{1-\alpha,
      1}(\R^d)} \leq C\|D u\|(\R^d)^{1 - \alpha} \|u\|_1^\alpha$, where $C =
  C(d, \alpha)$ is a constant.}

  Indeed, by the approximation theorem \cite[Theorem~5.2.2]{EvanGari},
  there is a sequence $(u_n)$ in $BV(\R^d) \cap C^1(\R^d)$ such that
  $u_n \to u$ in $L^1(\R^d)$ and $\| Du_n\|(\R^d) \to \|D
  u\|(\R^d)$. We may also assume that $u_n \to u$ pointwise. We next
  apply Fatou lemma to infer that
  \[
  \|u\|_{W^{1-\alpha, 1}(\R^d)} \leq \liminf_{n \to \infty}
  \|u_n\|_{W^{1-\alpha, 1}(\R^d)} \leq C \|D u\|(\R^d)^{1-\alpha}
  \|u\|_1^\alpha.
  \]

  \textbf{Step 3. Let $u$ be an integrable compactly supported
  function. If $\llbracket u \rrbracket \in \bF^\alpha_d(\R^d)$, then
  $\|u\|_{W^{1-\alpha,1}(\R^d)} \leq C \tilde{\bF}^\alpha(\llbracket u
  \rrbracket)$.}

  Let $\left(\llbracket u_k \rrbracket\right)$ be an
  $\alpha$-fractional decomposition of $\llbracket u \rrbracket$. The
  $L^1$-convergence of the series $\sum u_k$ implies that some
  subsequence of the sequence of partial sums converges almost
  everywhere to $u$. It follows that $|u(x)-u(y)| \leq \sum_{k=0}^\infty
  |u_k(x) - u_k(y)|$ for almost all $x, y$. Thus,
  \[
  \|u\|_{W^{1-\alpha, 1}(\R^d)} \leq \sum_{k=0}^\infty
  \|u_k\|_{W^{1-\alpha,1}(\R^d)} \leq C \sum_{k=0}^\infty \|D
  u_k\|(\R^d)^{1-\alpha} \|u_k\|_1^\alpha.
  \]
  We conclude by taking the infimum in the right-hand side over all
  possible decompositions of $\llbracket u \rrbracket$.

  \textbf{Step 4. If $u \in W^{1-\alpha,1}_c(\R^d)$, then $\llbracket
  u \rrbracket \in \bF^\alpha_d(\R^d)$ and
  $\tilde{\bF}^\alpha(\llbracket u \rrbracket) \leq C
  \|u\|_{W^{1-\alpha, 1}(\R^d)}$.}

  Some elements here are inspired by the proof
  of~\cite[Theorem~A.1]{BourBrezMiro}.  As the norms $\| \cdot
  \|_{W^{1-\alpha, 1}(\R^d)}$ and $\tilde{\bF}^\alpha$ behave
  identically under rescaling and translation, we may assume without
  loss of generality that $\spt u \subset [0, 1]^d$.  For each integer
  $k \geq 0$, we let $\calP_k$ be the collection of dyadic $k$-cubes
  in $[0, 1]^d$. We define
  \[
  v_k = \sum_{Q \in \calP_k} \left( \frac{1}{|Q|} \int_Q u \right) \ind_Q
  \]
  It is well-known that $v_k \to u$ in $L^1(\R^d)$. We define the
  following $BV$ functions
  \[
  u_0 = v_0 \text{ and } u_k = v_k - v_{k-1} \text{ for all } k \geq 1
  \]
  all of them being compactly supported in $[0, 1]^d$.

  We wish to prove that the decomposition $\left(\llbracket u
  \rrbracket = \sum_{k=0}^\infty \llbracket u_k \rrbracket \right)$ is
  an $\alpha$-fractional decomposition of $\llbracket U \rrbracket$.

  We deal with the function $u_0 = \left(\int_{[0, 1]^d} u \right)
  \ind_{[0, 1]^d}$ separately. We have
  \[
  \| D u_0 \|(\R^d) \leq \left| \int_{[0, 1]^d} u \right| \| D \ind_{[0, 1]^d}\|(\R^d) \leq \|u\|_1 \cdot 2d.
  \]  
  Therefore
  \[
  \|D u_0\|(\R^d)^{1-\alpha} \| u_0 \|_1^\alpha \leq (2d)^{1-\alpha}
  \|u\|_1.
  \]
  Moreover, recalling that $\spt u \subset [0, 1]^d$, one has
  \begin{align*}
  \|u \|_{W^{1-\alpha, 1}(\R^d)} 
   & = \int_{\R^d} \int_{\R^d} \frac{|u(x) - u(y)|}{|x-y|^{d+1-\alpha}} \diff x \diff y \\ 
   & \geq \int_{[0, 1]^d} \left( \int_{\R^d \setminus [-1, 2]^d} \frac{|u(x) - u(y)|}{|x-y|^{d+1-\alpha}} \diff x \right) \diff y \\
   & \geq \int_{[0,1]^d} |u(y)| \left( \int_{\R^d \setminus [-1, 2]^d} \frac{1}{|y-x|^{d+1-\alpha}} \diff x\right) \diff y \\
   & \geq C \|u\|_1.
  \end{align*}
  Therefore,
  \begin{equation}
    \label{eq:Du0u0}
    \|D u_0\|(\R^d)^{1-\alpha} \| u_0 \|_1^\alpha \leq C
    \|u\|_{W^{1-\alpha, 1}(\R^d)}.
  \end{equation}
  Now consider any $k \geq 1$. As the function $u_k$ has a
  decomposition
  \[
  u_k = \sum_{Q \in \calP_k} \alpha_Q \ind_Q
  \]
  we infer that
  \[
  \| D u_k \|(\R^d) \leq \sum_{Q \in \calP_k} |\alpha_Q| \| D \ind_Q
  \|(\R^d) = 2d (2^{-k})^{d-1} \sum_{Q \in \calP_k} |\alpha_Q| = C
  2^{k} \|u_k\|_1
  \]
  This leads to
  \begin{equation}
    \label{eq:sumkDukuk}
  \sum_{k=1}^\infty \|D u_k\|(\R^d)^{1-\alpha} \|u_k\|_1^\alpha \leq C
  \sum_{k=1}^\infty 2^{(1-\alpha)k} \|u_k\|_1
  \end{equation}
  Next we intend to estimate the right-hand side
  of~\eqref{eq:sumkDukuk}. For any $Q \in \calP_k$, we let $\hat{Q}
  \in \calP_{k-1}$ be the parent cube of $Q$. Almost everywhere on a
  $k$-cube $Q \in \calP_k$, the function $u_k$ is constant and takes the value
  \begin{align*}
  \alpha_Q & = \frac{1}{\calL^d(Q)} \int_Q u(x) \diff x -
  \frac{1}{\calL^d(\hat{Q})} \int_{\hat{Q}} u(y) \diff y \\ & =
  \frac{1}{\calL^d(Q) \calL^d(\hat{Q})} \int_Q \int_{\hat{Q}} \left(
  u(x) - u(y) \right) \diff x \diff y
  \end{align*}
  Consequently,
  \begin{align*}
  \|u_k\|_1 & \leq \sum_{Q \in \calP_k} \frac{1}{\calL^d(\hat{Q})}
  \int_Q \int_{\hat{Q}} |u(x) - u(y)| \diff x \diff y \\ & \leq
  2^{(k-1)d} \int_{\R^{2d}} |u(x) - u(y)| \sum_{Q \in \calP_k}
  \ind_Q(x) \ind_{\hat{Q}}(y) \diff x \diff y
  \end{align*}
  This implies that
  \begin{equation}
    \label{eq:sumkDukuk2}
  \sum_{k=1}^\infty 2^{(1 - \alpha)k} \|u_k\|_1 \leq \int_{\R^{2d}}
  \frac{|u(x) - u(y)|}{|x - y|^{d+1-\alpha}} a(x, y) \diff x \diff y
  \end{equation}
  where
  \[
  a(x, y) \leq C |x - y|^{d + 1 - \alpha} \sum_{k=1}^\infty 2^{k(d+1 -
    \alpha)} \sum_{Q \in \calP_k} \ind_Q(x) \ind_{\hat{Q}}(y)
  \]
  It is clear that, for all $k \geq 1$ and almost all $(x, y)$,
  \[
  \sum_{Q \in \calP_k} \ind_Q(x) \ind_{\hat{Q}}(y) \leq 1
  \]
  and that this sum is zero if $|x - y|$ is greater than the diameter
  $\sqrt{d} 2^{-(k-1)}$ of a $(k-1)$-cube. Thus, we can bound $a(x,
  y)$ by
  \[
  C|x - y|^{d+1 - \alpha} \sum_{k=1}^{k_0} 2^{k(d+1 - \alpha)}
  \]
  where $k_0$ is the greatest integer such that $\sqrt{d} 2^{-(k_0 -
    1)} \geq |x - y|$. This proves that the function $a$ is bounded by
  a constant depending only on $d$ and
  $\alpha$. Finally,~\eqref{eq:Du0u0},~\eqref{eq:sumkDukuk}
  and~\eqref{eq:sumkDukuk2} show that
  \[
  \sum_{k=1}^\infty \| Du_k \|(\R^d)^{1 - \alpha} \| u_k \|_1^\alpha
  \leq C \| u \|_{W^{1-\alpha}(\R^d)}
  \]
  This proves that $\llbracket u \rrbracket \in \bF^\alpha_d(\R^d)$
  and $\tilde{\bF}^\alpha(\llbracket u \rrbracket) \leq C
  \|u\|_{W^{1-\alpha,1}(\R^d)}$.
\end{proof}

In light of Subsection~\ref{subsec:firstexample} and the previous
theorem, we can assert that if $U$ is a bounded open set in $\R^d$
then,
\[
d - 1 + \inf \{ \alpha \in [0, 1] : \llbracket U \rrbracket \in
\bF^\alpha_d(\R^d) \} \leq \dim_{\mathrm{box}}(\partial U)
\]
or, equivalently,
\[
d - \dim_{\mathrm{box}}(\partial U) \leq \sup \{ s \in (0, 1) :
\ind_{U} \in W^{s, 1}_c(\R^d) \}
\]
(where, in the right-hand side, the supremum is zero if there are no
$s \in (0, 1)$ such that $\ind_U \in W^{s, 1}_c(\R^d)$).  The idea of
using the fractional Sobolev exponent $s$ as a way to measure the
fractional codimension of the boundary of $U$ has been first
introduced in~\cite{Visi}, where the above formula is proven to hold
if $U$ is any bounded Lebesgue measurable subset of $\R^d$, and the
topological boundary $\partial U$ is replaced by a measure-theoretic
analog.

\section{Charges in middle dimension}
\label{sec:charges}

\subsection{Definition}
\label{e:charges}
In this section, we present a self-contained introduction to charges
in middle dimension and establish their fundamental properties. This
concept was initially developed by De Pauw, Moonens, and Pfeffer
in~\cite{DePaMoonPfef}, where charges were defined over arbitrary
subsets of Euclidean space. The theory was later extended to the more
general context of metric spaces in~\cite{DePaHardPfef}. In this work,
however, we restrict our attention to charges defined on compact
subsets of $\mathbb{R}^d$, later on $\mathbb{R}^d$ itself in
Subsection~\ref{subsec:chargesRd}. This narrower focus allows us to
simplify many of the underlying arguments.

An \emph{$m$-charge} over a compact set $K \subset \R^d$ is a linear
map $\omega \colon \bN_m(K) \to \R$ that satisfies one of the
following equivalent continuity properties:
\begin{enumerate}
\item[(A)] $\omega(T_n) \to 0$ for any bounded sequence $(T_n)$ in
  $\bN_m(K)$ that converges in flat norm to $0$;
\item[(B)] the restriction of $\omega$ to the unit ball of
  $\bN_m(K)$ is $\bF$-continuous;
\item[(C)] for all $\varepsilon > 0$, there exists $\theta \geq 0$
  such that
  \[
  |\omega(T)| \leq \varepsilon \bN(T) + \theta \bF(T)
  \]
  holds for any normal current $T \in \bN_m(K)$.
\end{enumerate}
\begin{proof}[Proof of the equivalences]
  One clearly has (A) $\iff$ (B) and (C) $\implies$ (A). The only non
  trivial implication (A) $\implies$ (C) can be derived as a short
  consequence of the compactness theorem. Indeed, suppose by
  contradiction that (A) holds and (C) is false. In this case, there
  is $\varepsilon > 0$ and a sequence $(T_n)$ of normal currents
  supported in $K$, with normal masses $\bN(T_n) = 1$ such that
  \begin{equation}
    \label{eq:1}
    |\omega(T_n)| > n \bF(T_n) + \varepsilon
  \end{equation}
  for all $n$. Some subsequence $(T_{n_k})$ converges to a normal
  current $T \in \bN_m(K)$ in flat norm. Property (A) then implies
  that $\omega(T_{n_k}) \to \omega(T)$. Consequently, $\bF(T_{n_k})
  \leq n_k^{-1} |\omega(T_{n_k})|$ tends to $0$ as $k \to \infty$,
  which implies that $T = 0$ and $\omega(T_{n_k}) \to 0$. This is in
  contradiction with~\eqref{eq:1}.
\end{proof}
The space of $m$-charges over $K$ is denoted $\bCH^m(K)$. As $\bF \leq
\bN$, the continuity property (C) above implies that charges are
$\bN$-continuous, \textit{i.e.} $\bCH^m(K)$ is a subspace of the dual
$\bN_m(K)^*$. The space $\bCH^m(K)$ is equipped with the operator norm
\begin{equation}
  \label{eq:normCHmK}
  \| \omega \|_{\bCH^m(K)} \defeq \sup \left\{ \omega(T) : T \in
  \bN_m(K) \text{ and } \bN(T) \leq 1 \right\}.
\end{equation}
In fact, $\bCH^m(K)$ is a closed subspace of $\bN_m(K)^*$, for if
$(\omega_n)$ is a sequence in $\bCH^m(K)$ converging towards $\omega
\in \bN_m(K)^*$, then $\omega_n \to \omega$ uniformly on the unit ball
of $\bN_m(K)$. We conclude by (B) that $\omega$ is a charge. As a
result, $\bCH^m(K)$ is a Banach space.

In addition, there is also a notion of \emph{weak convergence of
  charges}: we say that $\omega_n \to \omega$ weakly whenever
$\omega_n(T) \to \omega(T)$ for all $T \in \bN_m(K)$. %Actually, this
%terminology is consistent with fact, stated here without further use,
%that $\bN_m(K)$ is the dual of $\bCH^m(K)$, for the duality bracket
%$\langle \omega, T \rangle = \omega(T)$, where $\omega \in \bCH^m(K)$
%and $T \in \bN_m(K)$.

\subsection{Continuous differential forms}
\label{subsec:cdf}
Any continuous $m$-form on $K$ defines an $m$-charge, via the map
\[
\Lambda \colon C(K; \wedge^m \R^d) \to \bCH^m (K)
\]
defined by
\begin{equation}
  \label{eq:defLambda}
  \Lambda(\omega)(T) = \int_{K} \langle \omega(x), \vec{T}(x)
  \rangle \diff \|T\|(x)
\end{equation}
for $\omega \in C(K; \wedge^m \R^d)$ and $T \in \bN_m(K)$. Note that,
in the special case where $\omega$ has a smooth extension to $\R^d$,
still denoted $\omega$, then $\Lambda(\omega)(T) = T(\omega)$.

Let us check that $\Lambda(\omega)$, defined by~\eqref{eq:defLambda}
for $\omega \in C(K; \wedge^m \R^d)$, is indeed an $m$-charge.

To this end, let us fix $\varepsilon > 0$ and choose a compactly
supported smooth form $\phi \colon \R^d \to \wedge^m \R^d$ such that
$|\omega(x) - \phi(x)| \leq \varepsilon$ for all $x \in K$. Then, for
all $T \in \bN_m(K)$,
\begin{equation}
  \label{eq:lambdaomega1}
  |\Lambda(\omega)(T) - \Lambda(\phi_{\mid K})(T)|  \leq \left| \int_{K} \langle \omega(x) -
  \phi(x), \vec{T}(x) \rangle \diff \|T\|(x)\right| 
  \leq \varepsilon \bM(T) \leq \varepsilon \bN(T) %+ \max \{ \|\phi\|_{\infty}, \| \diff \phi
  %\|_{\infty} \} \bF(T) \\ & \leq \varepsilon \bN(T) + \theta \bF(T)
\end{equation}
whereas
\begin{equation}
  \label{eq:lambdaomega2}
|\Lambda(\phi_{\mid K})(T)| \leq \theta \bF(T)
\end{equation}
for $\theta = \max \{ \|\phi\|_{\infty}, \| \diff
\phi\|_{\infty}\}$. The map $\Lambda(\phi_{\mid K}) \colon T \mapsto
T(\phi)$ is clearly linear. As $\varepsilon > 0$ is arbitrary,
inequality~\eqref{eq:lambdaomega1} shows the linearity of
$\Lambda(\omega)$. As for continuity, it follows
from~\eqref{eq:lambdaomega1}, \eqref{eq:lambdaomega2}
and~\ref{e:charges}(C).

We warn the reader that $\Lambda$ is, in general, not injective. Indeed, flat $m$-chains cannot be excessively concentrated. More precisely, it is known that a nonzero flat chain $T \in \bF_m(\R^d)$ must have a support of Hausdorff measure $\calH^m(\spt T) > 0$. We refer to \cite[4.1.20]{Fede} for a more general statement involving the integralgeometric measure. Consequently, if $K$ is nonempty and $\calH^m(K) = 0$, then $\bN_m(K) \subset \bF_m(K) = \{0\}$, which in turn implies $\bCH_m(K) = \{0\}$. In this situation, $\Lambda$ cannot be injective. For this reason, we will not identify a continuous $m$-form with its associated $m$-charge.

Only in the simplest case $m = 0$ do charges correspond to continuous functions, as we prove now.

\begin{prop}
  \label{prop:CH0}
  The map $\Lambda \colon C(K) \to \bCH^0(K)$ is a Banach space
  isomorphism.
\end{prop}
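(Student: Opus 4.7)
The plan is to exhibit $\Lambda$ as a Banach space isomorphism (in fact, an isometry) by producing an explicit inverse. First I would check boundedness: for $\omega \in C(K)$ and $T \in \bN_0(K)$, the defining formula~\eqref{eq:defLambda} gives $|\Lambda(\omega)(T)| \leq \|\omega\|_{\infty, K} \bM(T) = \|\omega\|_{\infty, K} \bN(T)$, whence $\|\Lambda(\omega)\|_{\bCH^0(K)} \leq \|\omega\|_{\infty, K}$. This already implies injectivity (evaluating on Dirac currents), but I will instead construct an inverse and read off injectivity afterward.

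For the inverse, given $\omega \in \bCH^0(K)$, I would define $f(x) \defeq \omega(\llbracket x \rrbracket)$, where $\llbracket x \rrbracket$ denotes the unit Dirac $0$-current at $x$. The key geometric input is the bound $\bF(\llbracket x \rrbracket - \llbracket y \rrbracket) \leq |x - y|$, coming from $\llbracket x \rrbracket - \llbracket y \rrbracket = \partial \llbracket [y, x] \rrbracket$ (the segment $1$-current of mass $|x-y|$). Combined with the charge continuity property~\ref{e:charges}(C), this gives $|f(x) - f(y)| \leq 2 \varepsilon + \theta(\varepsilon) |x - y|$, yielding uniform continuity of $f$; moreover $\|f\|_{\infty, K} \leq \|\omega\|_{\bCH^0(K)}$ since $\bN(\llbracket x \rrbracket) = 1$.

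It remains to show $\Lambda(f) = \omega$. Both sides are $m$-charges, and they agree on atomic currents by construction of $f$, so it suffices to prove that the atomic currents form an $\bN$-bounded, $\bF$-dense subset of $\bN_0(K)$. Concretely, representing $T \in \bN_0(K)$ by a finite signed Borel measure $\mu$ supported in $K$, one approximates $\mu$ by $\mu_\delta = \sum_i \mu(A_i) \llbracket x_i \rrbracket$ for a Borel partition $(A_i)$ of $K$ of mesh $\leq \delta$ with $x_i \in A_i$. Writing $\mu - \mu_\delta$ as a sum of signed measures $\nu_i$ of zero total mass supported in $A_i$, an optimal-transport-style construction between the positive and negative parts of $\nu_i$ produces a $1$-current $S_i$ with $\partial S_i = \nu_i$ and $\bM(S_i) \leq \delta\, |\mu|(A_i)$, so $\bF(\mu - \mu_\delta) \leq \delta\, |\mu|(K) \to 0$. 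Since $\bN(\mu_\delta) \leq \bN(\mu)$, continuity property~\ref{e:charges}(A) applied to both $\omega$ and $\Lambda(f)$ gives $\omega(\mu) = \lim \omega(\mu_\delta) = \lim \Lambda(f)(\mu_\delta) = \Lambda(f)(\mu)$.

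The main obstacle is this last $\bF$-approximation step: one needs to bound the flat norm of a zero-mean signed measure on a small set by the diameter times its total variation, which requires the explicit transport construction sketched above. Once this is in hand, the map $\omega \mapsto f$ is the inverse of $\Lambda$, and the two-sided bounds $\|\Lambda(\omega)\|_{\bCH^0(K)} \leq \|\omega\|_{\infty, K}$ and $\|f\|_{\infty, K} \leq \|\omega\|_{\bCH^0(K)}$ show $\Lambda$ is an isometric isomorphism.
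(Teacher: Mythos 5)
Your proof is correct and follows essentially the same route as the paper's: both construct the inverse $\Gamma(\omega)(x) = \omega(\llbracket x \rrbracket)$ and reduce the identity $\Lambda \circ \Gamma = \mathrm{id}$ to the $\bF$-density (with mass control) of polyhedral $0$-currents in $\bN_0(K)$. The only differences are cosmetic but favorable to you: the paper cites the deformation theorem~\cite[4.2.9]{Fede} for that density, while you sketch a direct, self-contained transport construction; and the paper deduces continuity of $\Lambda$ from the open mapping theorem, whereas you observe the trivial bound $\|\Lambda(\omega)\|_{\bCH^0(K)} \leq \|\omega\|_{\infty, K}$ directly from~\eqref{eq:defLambda}, which together with $\|\Gamma(\omega)\|_{\infty, K} \leq \|\omega\|_{\bCH^0(K)}$ upgrades the conclusion to an isometry (slightly stronger than what the proposition asserts, and what the paper's argument actually yields too since $\Lambda(\omega)(\llbracket x \rrbracket) = \omega(x)$).
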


\begin{proof}
  We define the map $\Gamma \colon \bCH^0(K) \to C(K)$ by
  $\Gamma(\omega)(x) = \omega(\llbracket x \rrbracket)$ for all
  $\omega \in \bCH^0(K)$ and $x \in K$. The continuity of
  $\Gamma(\omega)$ follows from~\ref{e:charges}(A). It is clear that
  $\Gamma \circ \Lambda$ is the identity operator. Therefore, it
  suffices to show that $\Gamma$ is a Banach space isomorphism.
  
  First, we check that $\Gamma$ is a continuous. For all $x \in K$, we
  have
  \[
  |\Gamma(\omega)(x)| \leq \|\omega\|_{\bCH^0(K)} \bM(\llbracket x
  \rrbracket) = \|\omega\|_{\bCH^0(K)}.
  \]
  Thus, $\|\Gamma(\omega)\|_\infty \leq \|\omega\|_{\bCH^0(K)}$.

  Next, we claim that $\Gamma$ is injective. Let us call $\bP_0(K)$
  the space of polyhedral $0$-currents supported in $K$, \textit{i.e.}
  the linear space spanned by the $\llbracket x \rrbracket$, for $x
  \in K$. By an easy corollary of the deformation
  theorem~\cite[4.2.9]{Fede}, every $T \in \bN_0(K)$ is the
  $\bF$-limit of a sequence $(T_n)$ in $\bP_0(K)$ such that $\bM(T_n)
  \leq \bM(T)$. Observe that if $\omega \in \ker \Gamma$, then
  $\omega$ vanishes on $\bP_0(K)$. By the preceding result and the
  continuity property of charges, $\omega = 0$. This proves that
  $\Gamma$ is injective.

  Next we prove the surjectivity of $\Gamma$. Let $g \in C(K)$. We
  define the function $\omega$, on polyhedral $0$-currents, by
  \[
  \omega \left( \sum_{k=1}^n a_k \llbracket x_k \rrbracket
  \right) = \sum_{k=1}^n a_k g(x_k).
  \]
  Let $\varepsilon > 0$. There is a smooth compactly supported
  function $f \in C^\infty_c(\R^d)$ such that $\|f - g\|_{\infty, K}
  \leq \varepsilon$. Setting $\theta = \max\{\|f\|_{\infty}, \| \diff
  f \|_{\infty}\}$, we have
  \begin{align}
  |\omega(T)| & \leq \left| \sum_{k=1}^n a_k f(x_k) \right|+ \left|
  \sum_{k=1}^n a_k (f - g)(x_k) \right| \notag \\ & \leq \theta \bF(T)
  + \varepsilon \bM(T) \label{eq:2}
  \end{align}
  for every polyhedral $0$-current $T$. We extend $\omega$ to
  $\bM_0(K)$ with
  \begin{equation}
    \label{eq:extensionCharge}
  \omega(T) = \lim_{n \to \infty} \omega(T_n)
  \end{equation}
  where $T \in \bM_0(K)$ and $(T_n)$ is any sequence of polyhedral
  $0$-currents that $\bF$-converges to $T$, with $\bM(T_n) \leq
  \bM(T)$. By~\eqref{eq:2}, the sequence $(\omega(T_n))$ is Cauchy,
  which ensures that the limit in~\eqref{eq:extensionCharge} exists and
  does not depend on the choice of an approximating sequence. It is
  also straightforward that $\omega$ is linear and~\eqref{eq:2} holds
  now for any $T \in \bM_0(K)$. Hence $\omega \in \bCH^0(K)$ and
  $\Gamma(\omega) = g$. This proves that $\Gamma$ is onto.

  Finally, $\Gamma^{-1} = \Lambda$ is continuous by the open mapping
  theorem.
\end{proof}

\subsection{Operations on charges}
Operations on normal currents, such as pushforwards by Lipschitz maps,
taking the boundary, have a counterpart in term of charges, defined by
duality. We first define the \emph{exterior derivative} $\diff \omega
\in \bCH^{m+1}(K)$ of a charge $\omega \in \bCH^m(K)$, by setting
\[
\diff \omega(T) \defeq \omega(\partial T)
\]
for all $T \in \bN_m(K)$. That $\diff \omega$ is continuous, in the
sense of charges, is a consequence of the identities
\begin{equation}
  \label{eq:FNboundary}
  \bN(\partial T) \leq \bN(T), \qquad \bF(\partial T) \leq \bF(T)
\end{equation}
that furthermore implies that the operator $\mathrm{d} \colon
\bCH^{m}(K) \to \bCH^{m+1}(K)$ is bounded with norm less than or equal to $1$.

This allows us to introduce other examples of charges, namely the
differentials of continuous forms.
%{\color{red}In fact, this is the most general
%we can do, as a representation theorem, due to De Pauw, Moonens, and
%Pfeffer \cite[Theorem~6.1]{DePaMoonPfef} --which we shall not use--
%states that any charge can be decomposed as
%\[
%\omega = \Lambda(\omega_1) + \diff \Lambda(\omega_2)
%\]
%where $\omega_1$ are continuous $m$- and $(m-1)$-forms. (Is this
%true?)}

Next, if $f \colon K \to L$ is a Lipschitz map between two compact
subsets $K$ and $L$ of $\R^d$ and $\R^{d'}$, respectively, we define
the \emph{pullback}
\[
f^{\#} \colon \bCH^m(L) \to \bCH^m(K)
\]
by
\[
f^{\#} \omega(T) = \omega(f_\# T)
\]
for all $\omega \in \bCH^m(L)$ and $T \in \bN_m(K)$.

Most results from differential calculus extend to charges, by
duality arguments. We state some of them for the sake of completeness.
\begin{itemize}
\item[(A)] $\diff \circ \diff = 0$.
\item[(B)] The exterior derivative commutes with Lipschitz pullbacks.
\item[(C)] $(f \circ g)^{\#} = g^{\#} \circ f^{\#}$ for Lipschitz maps
  $f,g$ with compatible domains and codomains.
\end{itemize}

\subsection{Relative compactness in $\bCH^m(K)$}
We now state a criterion for relative compactness in $\bCH^m(K)$. It
will prove useful in the next section for establishing the basic
properties of the space of fractional charges. This result seems to
be new.

\begin{prop}
  \label{prop:compactCH}
  Let $\Omega \subset \bCH^m(K)$. The following are equivalent:
  \begin{itemize}
  \item[(A)] $\Omega$ is relatively compact;
  \item[(B)] for all $\varepsilon > 0$, there is $\theta \geq 0$ such that
    \[
    |\omega(T)| \leq \varepsilon \bN(T) + \theta \bF(T)
    \]
    holds for all $\omega \in \Omega$. We stress that $\theta$ does not depend on $\omega$. 
  \end{itemize}
\end{prop}

\begin{proof}
  (A) $\implies$ (B). We prove this implication by
  contradiction. Suppose there are $\varepsilon > 0$ and two sequences
  $(\omega_n)$ in $\Omega$ and $(T_n)$ in $\bN_m(K)$ such that
  \[
  |\omega_n(T_n)| > \varepsilon \bN(T_n) + n \bF(T_n) 
  \]
  for all integers $n$. We can also suppose $\bN(T_n) = 1$ for all
  $n$. As $\Omega$ is relatively compact, it is bounded, consequently
  \[
  n \bF(T_n) < \sup_{\omega \in \Omega} \|\omega\|_{\bCH^m(K)} <
  \infty
  \]
  which implies that $(T_n)$ converges to $0$ in flat norm. On the
  other side, there is a subsequence $(\omega_{n_k})$ that converges
  to $\omega \in \bCH^m(K)$. Hence
  \[
  |\omega_{n_k}(T_{n_k})| \leq |\omega(T_{n_k})| + \|\omega -
  \omega_{n_k}\|_{\bCH^m(K)} \to 0
  \]
  which contradicts that $|\omega_{n_k}(T_{n_k})| > \varepsilon$.

  (B) $\implies$ (A). Denote by $B_{\bN_m(K)}$ the unit ball of
  $\bN_m(K)$, metrized by $\bF$, and let $\iota \colon \bCH^m(K) \to
  C(B_{\bN_m(K)})$ be the linear map that sends a charge to its
  restriction to $B_{\bN_m(K)}$. Here, $C(B_{\bN_m(K)})$ is given the
  supremum norm. Since $\bCH^m(K)$ is complete and $\iota$ is an
  isometric embedding, we only need to show that $\iota(\Omega)$ is
  relatively compact in $C(B_{\bN_m(K)})$.

  First, the inequality in (B) (for $\varepsilon = 1$) entails that
  $\iota(\Omega)$ is pointwise bounded. Now, for an arbitrary
  $\varepsilon > 0$ there is $\theta \geq 0$ as in (B). If $T, S \in
  B_{\bN_m(K)}$ satisfy $\bF(T - S) \leq \varepsilon / \theta$, then
  for any $\omega \in \Omega$, one has
  \[
  |\iota(\omega)(T) - \iota(\omega)(S)| \leq \theta \bF(T - S) +
  \varepsilon \bN(T - S) \leq 3 \varepsilon.
  \]
  This proves that $\iota(\Omega)$ is equicontinuous, thus relatively
  compact by the Arzelà-Ascoli theorem. The proof is then finished.
\end{proof}

\subsection{Charges over $\R^d$}
\label{subsec:chargesRd}
An \emph{$m$-charge} over $\R^d$ is a linear map $\omega \colon
\bN_m(\R^d) \to \R$ that satisfies the continuity condition
\[
\omega(T_n) \to 0 \quad \text{whenever } \bF(T_n) \to 0, \, \, \sup_{n \geq
  0} \bN(T_n) < \infty \text{ and } \bigcup_{n=0}^\infty \spt T_n
\text{ is bounded}
\]
Alternatively, $\omega$ is an $m$-charge over $\R^d$ whenever its
restriction to $\bN_m(K)$ for every compact set $K$ is an $m$-charge over
$K$. We denote by $\bCH^m(\R^d)$ the space of $m$-charges over
$\R^d$.

For each compact set, we define
\[
\| \omega \|_{\bCH^m, K} = \sup \left\{ \omega(T) : T \in \bN_m(K) \right\}
\]
This definition closely resembles~\eqref{eq:normCHmK}, with the
difference that $\| \cdot \|_{\bCH^{m}, K}$ is now only a seminorm on
$\bCH^m(\R^d)$. The collection of these seminorms, as $K$ ranges over
all compact subsets of $\R^d$, defines a vector space topology on
$\bCH^m(\R^d)$. Clearly, only a countable subset of these seminorms is
required to determine this topology (for example, by letting $K$ range
over the closed ball centered at the origin with positive integer
radius). Furthermore, it follows directly from the fact that the
spaces $\bCH^m(K)$ are Banach spaces and that $\bCH^m(\R^d)$ is a
Fréchet space.

Many definitions and properties carry over easily to $\R^d$.
\begin{itemize}
\item[(A)] \emph{Weak convergence}: a sequence $(\omega_n)$ converges
  to $\omega$ weakly in $\bCH^m(\R^d)$ whenever $\omega_n(T) \to
  \omega(T)$ for any $T \in \bN_m(\R^d)$;
\item[(B)] \emph{Exterior derivative}: the operator $\diff \colon
  \bCH^m(\R^d) \to \bCH^{m+1}(\R^d)$ defined by $\diff \omega(T) =
  \omega(\partial T)$ is linear and continuous;
\item[(C)] \emph{Pullback by locally Lipschitz maps}: for any locally
  Lipschitz map $f \colon \R^d \to \R^{d'}$ and $\omega \in
  \bCH^{m}(\R^{d'})$, we define the pullback $f^\# \omega \in
  \bCH^{m}(\R^d)$ by $f^\# \omega(T) = \omega( f_\# T)$. This
  definition has a natural adaptation in the domain or the codomain of
  $f$ is a compact subset of the Euclidean space.

  In particular, we will frequently use pullbacks as a tool to extend
  charges, in the following context: if $K \subset \R^d$ is compact
  convex and $p \colon \R^d \to K$ is the orthogonal projection, then
  $p^\# \omega \in \bCH^{m}(\R^d)$ agrees with $\omega$ on $\bN_m(K)$. This is because $p_\# T = \operatorname{id}_\# T = T$ for all $T \in \bN_m(K)$. Recall that the pushforward of a flat chain $T$ by a map $f$ depends only on the restriction of $f$ to the support $\spt T$.
\item[(D)] \emph{Continuous differential $m$-forms over $\R^d$}: as before, a
  continuous $m$-form can be naturally regarded as an $m$-charge,
  through the map $\Lambda \colon C(\R^d; \wedge^m \R^d) \to
  \bCH^m(\R^d)$, defined by
  \[
  \Lambda(\omega)(T) = \int_{\R^d} \langle \omega(x), \vec{T}(x)
  \rangle \diff \| T \|(x)
  \]
  for all $T \in \bN_m(\R^d)$.
  
  This map is injective. Indeed, suppose that $\Lambda(\omega) = 0$. For every compactly supported smooth $m$-current $\calL^d \wedge \xi$, one has
  \[
  \Lambda(\omega)(\calL^d \wedge \xi) = \int_{\R^d} \langle \omega(x), \xi(x) \rangle \diff x = 0.
  \]
  Since this holds for every $\xi \in C^\infty_c(\R^d; \wedge_m \R^d)$, one infers that $\omega = 0$ by the fundamental lemma of the calculus of variations.
  
  Consequently, we will identify continuous $m$-forms with $m$-charges over $\R^d$,
  allowing us to consider that
  \[
  C(\R^d; \wedge^m \R^d) \subset \bCH^{m}(\R^d).
  \]
  In particular, in the case $\omega$ is a smooth $m$-form over $\R^d$
  and $T$ is a normal $m$-current, both expressions $\omega(T)$ and
  $T(\omega)$ are well-defined and interchangeable.

  When $\omega \in C^1(\R^d; \wedge^m \R^d)$, the meaning of $\diff
  \omega$ is unambiguous, whether we think of $\omega$ as an $m$-form
  or an $m$-charge. This follows from the classical Stokes' formula.
\item[(E)] \emph{Continuity of $0$-charges}: the map $\Lambda \colon
  C(\R^d) \to \bCH^0(\R^d)$ is a Fréchet space isomorphism, whose
  inverse is the map $\Gamma$ defined by $\Gamma(\omega)(x) =
  \omega(\llbracket x \rrbracket)$ for $\omega \in \bCH^0(\R^d)$ and
  $x \in \R^d$. Here, $C(\R^d)$ is given the Fréchet topology induced
  by the family of seminorms $\| \cdot \|_{\infty, K}$, where $K$ can
  be any compact subset of $\R^d$.
\item[(F)] \emph{Relative compactness in $\bCH^m(\R^d)$}: a subset
  $\Omega \subset \bCH^m(\R^d)$ is relatively compact in
  $\bCH^m(\R^d)$ whenever for all compact subsets $K \subset \R^d$ and
  $\varepsilon > 0$, there exists $\theta = \theta(K, \varepsilon)
  \geq 0$ such that $|\omega(T)| \leq \varepsilon \bN(T) + \theta
  \bF(T)$ for all $T \in \bN_m(K)$.
\end{itemize}
%This time, the formula~\eqref{eq:seminormCHm} still makes
%sense for $\omega \in \bCH^m(\R^d)$, however $\| \cdot \|_{\bCH^m(K)}$
%is only a seminorm on $\bCH^m(\R^d)$. The collection of all such seminorms
%induces a Fréchet topology on $\bCH^m(\R^d)$.

\subsection{Regularization of charges}

Operating on the entire space $\R^d$ enables the regularization of
charges through convolution. The \emph{convolution} of a charge $
\omega \in \bCH^m(\R^d)$ with a smooth compactly supported function
$\phi$ is the linear map $\bN_m(\R^d) \to \R$ that sends $T$ to
$\omega(T * \check{\phi})$. It satisfies the continuity property of
charges because the map $T \mapsto T * \check{\phi}$ is both $\bN$-
and $\bF$-continuous.

This provides an example of weak convergence of charges. Indeed, for
$T \in \bN_m(\R^d)$, one has $T * \Phi_\varepsilon \to 0$ in flat norm
as $\varepsilon \to 0$, whereas the normal masses of the $T *
\Phi_\varepsilon$ are bounded by $\bN(T)$, by
Proposition~\ref{prop:22}. (Recall that we fixed a regularization
kernel $\Phi_\varepsilon$ in Section~\ref{sec:prelim}). This entails
that $\omega * \Phi_\varepsilon \to \omega$ weakly.

\begin{prop}
  \label{prop:smooth}
  Let $\omega \in \bCH^m(\R^d)$ and $\phi \in C^\infty_c(\R^d)$. Then
  $\omega * \phi \in C^\infty(\R^d ; \wedge^m \R^d)$.
\end{prop}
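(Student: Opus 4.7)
The plan is to exhibit an explicit smooth $m$-form $\eta$ on $\R^d$ such that $\omega * \phi = \Lambda(\eta)$ as charges, where we use the identification of continuous $m$-forms with $m$-charges. Let $(e_I)_I$ denote the standard basis of $\wedge_m \R^d$ and $(e^I)_I$ its dual basis in $\wedge^m \R^d$. For each $x \in \R^d$ and each index $I$, I would introduce the smooth (hence normal) $m$-current
\[
S_x^I \defeq \calL^d \wedge \bigl( y \mapsto \phi(x - y) e_I \bigr),
\]
and then define $\eta^I(x) \defeq \omega(S_x^I)$ and $\eta(x) \defeq \sum_I \eta^I(x) e^I$. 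The observation that unifies everything is that $S_x^I = (\tau_x)_\# S_0^I$, where $\tau_x(y) = y + x$ and $S_0^I = \calL^d \wedge (\check\phi \cdot e_I)$, so the map $x \mapsto S_x^I$ inherits smoothness from $\phi$ in a strong enough sense for $\omega$ to differentiate through it.

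The first substantial step is to show that $\eta$ is $C^\infty$. For any direction $e_j$ and any $x_0 \in \R^d$, the difference quotient
\[
\frac{S_{x_0 + t e_j}^I - S_{x_0}^I}{t} = \calL^d \wedge \left( y \mapsto \frac{\phi(x_0 + t e_j - y) - \phi(x_0 - y)}{t}\, e_I \right)
\]
converges, as $t \to 0$, to $\calL^d \wedge (y \mapsto (\partial_j \phi)(x_0 - y) e_I)$ with convergence of the underlying vector fields in $C^\infty$ with uniform compact support. This implies convergence in normal mass $\bN$, and since $\omega$ is continuous on $\bN_m(K)$ for each compact $K$, we obtain that $\partial_j \eta^I(x_0)$ exists and equals $\omega\bigl(\calL^d \wedge (y \mapsto (\partial_j \phi)(x_0 - y) e_I)\bigr)$. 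An induction on the order of differentiation, applied to the same argument, gives $\eta \in C^\infty(\R^d; \wedge^m \R^d)$.

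The second step is to verify the identity $\omega * \phi = \Lambda(\eta)$ on smooth compactly supported normal currents. For $T = \calL^d \wedge \xi$ with $\xi = \sum_I \xi^I e_I$ smooth and compactly supported, a direct computation yields $T * \check\phi = \calL^d \wedge (\check\phi * \xi)$, so
\[
(\omega * \phi)(T) = \omega\bigl( \calL^d \wedge (\check\phi * \xi) \bigr) = \sum_I \omega\bigl( \calL^d \wedge ((\check\phi * \xi^I) e_I) \bigr).
\]
The key manoeuvre is to interchange $\omega$ with the convolution integral, using that
\[
\calL^d \wedge \bigl((\check\phi * \xi^I) e_I\bigr) = \int_{\R^d} \xi^I(z)\, S_z^I \, \diff z
\]
as an $\bN$-Bochner integral of a continuous, compactly supported map $z \mapsto \xi^I(z) S_z^I$. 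Approximating the integral by Riemann sums, applying $\omega$ by linearity, and passing to the limit via the continuity property \ref{e:charges}(C), I obtain $\omega\bigl(\calL^d \wedge ((\check\phi * \xi^I) e_I)\bigr) = \int \xi^I(z) \eta^I(z) \diff z$. Summing over $I$ gives $(\omega * \phi)(T) = \int \langle \eta(z), \xi(z) \rangle \diff z = \Lambda(\eta)(T)$.

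Finally, both $\omega * \phi$ and $\Lambda(\eta)$ are $m$-charges on $\R^d$. Since for any $T \in \bN_m(\R^d)$ the regularizations $T * \Phi_\varepsilon$ are smooth compactly supported normal currents with $\spt(T * \Phi_\varepsilon) \subset B(\spt T, 1)$, $\bN(T * \Phi_\varepsilon) \leq \bN(T)$, and $\bF(T - T * \Phi_\varepsilon) \to 0$ (Proposition~\ref{prop:22}), the equality extends to all $T \in \bN_m(\R^d)$ by the continuity condition defining charges over $\R^d$. This yields $\omega * \phi = \Lambda(\eta) \in C^\infty(\R^d; \wedge^m \R^d)$. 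The main technical obstacle is justifying the interchange of $\omega$ with the vectorial Bochner integral in the identification step; the Riemann sum approach reduces this to the uniform $\bN$-continuity of the integrand, which is straightforward from the smoothness of $\phi$ and $\xi$.
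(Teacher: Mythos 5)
Your proposal is correct and follows essentially the same approach as the paper: you define the candidate smooth form by evaluating $\omega$ on translated copies of $\calL^d \wedge (\check\phi \cdot e_I)$, prove smoothness via difference quotients, identify with $\omega * \phi$ on currents of the form $\calL^d \wedge \xi$ by Riemann-sum approximation of the convolution integral, and then extend to all of $\bN_m(\R^d)$ via mollification $T * \Phi_\varepsilon$ and the continuity property of charges. The only cosmetic difference is that you observe $\bN$-convergence of the difference quotients (which, since charges are $\bN$-continuous, is a slightly more direct justification than the paper's ``flat-norm convergence with uniformly bounded normal masses''), but the underlying estimates and the overall structure coincide with the paper's proof.
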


\begin{proof}
  We will actually prove the explicit formula
  \begin{equation}
    \label{eq:formulaConvOmega}
    (\omega * \phi)(z) = \sum_{I \in \Lambda(d, m)} \omega \left(
    \calL^d \wedge \phi( z - \cdot ) \boldsymbol{e}_I \right) \diff
    x_I \text{ for all } z \in \R^d,
  \end{equation}
  where $\Lambda(d, m)$ is the set of increasing subfamilies of $\{1,
  \dots, d\}$ of cardinal $m$, and for $I = (i_1, \dots, i_m)$,
  \[
  \boldsymbol{e}_I = e_{i_1} \wedge \cdots \wedge e_{i_m} \text{ and }
  \diff x_I = \diff x_{i_1} \wedge \cdots \wedge \diff x_{i_m}
  \]
  and $e_1, \dots, e_d$ is the canonical basis of $\R^d$.

  Call $\tilde{\omega}(z)$ the right-hand side
  in~\eqref{eq:formulaConvOmega}. First we check that $\tilde{\omega}$
  is a smooth $m$-form. This is done by ensuring that, for all $1 \leq
  i \leq d$ and for any sequence $(h_n)$ of nonzero real numbers
  tending to 0,
  \[
  \frac{ \calL^d \wedge \phi( z + h_n \boldsymbol{e}_i - \cdot )
    \boldsymbol{e}_I - \calL^d \wedge \phi( z - \cdot )
    \boldsymbol{e}_I }{h_n} \to \calL^d \wedge \frac{\partial
    \phi}{\partial x_i}(z - \cdot) \boldsymbol{e}_I
  \]
  in flat norm with uniformly bounded normal masses. An argument by
  induction finishes the proof that the component functions $z \mapsto
  \omega\left(\calL^d \wedge \phi(z - \cdot) \boldsymbol{e}_I \right)$
  are smooth.

  Next, in order to prove that the charges $\omega * \phi$ and
  $\tilde{\omega}$ coincide, we need only do so on currents of the
  form $\calL^d \wedge \xi$, where $\xi = \sum_{I \in \Lambda(d, m)}
  \xi_I \boldsymbol{e}_I$ is a compactly supported smooth
  $m$-vector field. This is because, for all $T \in \bN_m(\R^d)$,
  \[
  (\omega * \phi - \tilde{\omega})(T) = \lim_{\varepsilon \to 0}
  (\omega * \phi - \tilde{\omega})(T * \Phi_\varepsilon)
  \]
  using the continuity property of the charge $\omega * \phi -
  \tilde{\omega}$ and Proposition~\ref{prop:22}(E) and (G). As $T *
  \Phi_\varepsilon$ has the form $\calL^d \wedge \xi$ by
  \cite[4.1.2]{Fede}, the claim follows.

  We begin by evaluating
  \[
  (\calL^d \wedge \xi)(\tilde{\omega}) = \sum_{I \in \Lambda(d,m)}
  \int_{\R^d} \omega(\calL^d \wedge \phi( z - \cdot )
  \boldsymbol{e}_I) \xi_I(z) \diff z.
  \]
  On the other hand, one has
  \[
  \omega(T * \check{\phi}) = \sum_{I \in \Lambda(d,m)} \omega \left(
  \calL^d \wedge \xi_I * \check{\phi} \boldsymbol{e}_I \right).
  \]
  Following~\cite[4.1.2]{Fede}, we introduce, for every $n \geq 1$, a
  partition $A_{n,1}, \dots, A_{n,p_n}$ of $\spt \xi$ into Borel sets
  of diameter less than $n^{-1}$ and choose points $z_{n, k} \in A_{n,
    k}$ for $1 \leq k \leq p_n$. Then
  \[
  \sum_{k=1}^{p_n} \xi_I(z_{n,k}) \left(\calL^d \wedge \phi(z_{n,k} - \cdot)
  \boldsymbol{e}_I\right) \calL^d(A_{n,k}) \to \calL^d \wedge \xi_I *
  \check{\phi} \boldsymbol{e}_I
  \]
  in flat norm with uniformly bounded normal masses. Thus,
  \begin{align*}
    \omega(T * \check{\phi}) & = \lim_{n \to \infty} \sum_{I \in
      \Lambda(d,m)} \sum_{k=1}^{p_n} \xi_I(z_{n,k}) \omega(\calL^d \wedge
    \phi(z_{n,k} - \cdot) \boldsymbol{e_I}) \calL^d(A_{n,k}) \\ & =
    \sum_{I \in \Lambda(d,m)} \int_{\R^d} \xi_I(z) \omega(\calL^d
    \wedge \phi(z - \cdot) \boldsymbol{e_I}) \diff z \\ & = (\calL^d
    \wedge \xi)(\tilde{\omega}). \qedhere
  \end{align*}
\end{proof}

%\subsection{Operations on charges}
%Operations on normal currents, such as pushforwards under Lipschitz
%maps and taking boundaries, have corresponding counterparts for
%charges, defined through duality. Here we define:
%\begin{itemize}
%\item the \emph{exterior derivative} of a charge $\omega \in
%  \bCH^m(K)$ is the charge $\diff \omega \in \bCH^{m+1}(K)$ defined by
%  $\diff \omega(T) = \omega(\partial T)$.  This construction works
%  over $\R^d$ as well. %Of course $\diff \circ \diff = 0$.
%\item the \emph{pullback} of a charge $\omega \in \bCH^m(L)$ by a map
%  $f \colon K \to L$ (where $K \subset \R^d$ is either a compact
%  subset or $K = \R^d$, and similarly $L \subset \R^{d'}$ is a compact
%  subset or $L = \R^{d'}$) is the charge $f^{\#} \omega \in \bCH^m(K)$
%  defined by $f^{\#}\omega(T) = \omega(f_\#T)$.
%\item the \emph{convolution by $\Phi_\varepsilon$} of an $m$-charge
%  $\omega \in \bCH^m(\R^d)$ is the charge $\omega * \Phi_\varepsilon
%  \in \bCH^{m}(\R^d)$ defined by $\omega * \Phi_\varepsilon(T) =
%  \omega(T * \Phi_\varepsilon)$. It is a smooth $m$-form, see
%  \cite[Proposition~3.9]{Boua1}.
%\end{itemize}

\section{Fractional charges and duality with fractional currents}
\label{sec:fracCharges}

\subsection{Definition}
Let $\alpha \in \mathopen{(} 0, 1 \mathclose{]}$. An
  \emph{$\alpha$-fractional charge} over a compact set $K \subset
  \R^d$ is a linear functional $\omega \colon \bN_m(K) \to \R$ for
  which there is a constant $C \geq 0$ such that
  \[
  |\omega(T)| \leq C \bN(T)^{1 - \alpha} \bF(T)^\alpha \text{ for all
  } T \in \bN_m(K)
  \]
It is clear that the above requirement is stronger than the continuity
condition of Subsection~\ref{e:charges}. We adopt the notation
$\bCH^{m, \alpha}(K)$ to represent the space of $\alpha$-fractional
$m$-charges, normed by
\begin{equation*}
  \|\omega\|_{\bCH^{m, \alpha}(K)} = \inf \left\{ C \geq 0 :
  |\omega(T)| \leq C \bN(T)^{1-\alpha} \bF(T)^{\alpha} \text{ for all
  } T \in \bN_m(K) \right\}.
\end{equation*}
We also define $\| \omega \|_{\bCH^{m, \alpha}(K)} = \infty$ if
$\omega \in \bCH^m(K) \setminus \bCH^{m, \alpha}(K)$.
  
The parameter $\alpha$ represents regularity. One clearly has
inclusions
\[
\bCH^{m,\beta}(K) \subset \bCH^{m, \alpha}(K) \subset \bCH^m(K)
\]
(that are continuous) whenever $\beta \geq \alpha$. In addition, the
reader may use the continuity of the second embedding and the lower
semicontinuity of $\|\cdot\|_{\bCH^{m, \alpha}(K)}$ with respect to
weak convergence to check that $\bCH^{m, \alpha}(K)$ is a Banach
space.

When $\alpha = 1$ and $K$ is convex, we encounter a well-known
object. Indeed, in this case, a $1$-fractional charge $\omega$ is
$\bF$-continuous and $\bN_m(K)$ is $\bF$-dense in $\bF_m(K)$. As such,
$\omega$ can be uniquely extended so as to become an element of
$\bF_m(K)^*$, the space of \emph{flat $m$-cochains over $K$},
introduced by H. Whitney. This result will be generalized in
Theorem~\ref{thm:duality}.

More generally, we can think of $\alpha$-fractionality as a regularity
that is intermediate between that of mere charges and that of flat
cochains.

We observe that, as a consequence of~\eqref{eq:FNboundary}, the
exterior derivative of an $\alpha$-fractional $m$-charge is again
$\alpha$-fractional (and the map $\diff \colon \bCH^{m, \alpha}(K) \to
\bCH^{m+1, \alpha}(K)$ is continuous). As for pullbacks, if $m \geq 1$, $f \colon
K \to L$ denotes a Lipschitz map and $\omega \in \bCH^{m, \alpha}(L)$,
then for all $T \in \bN_m(K)$,
\begin{align*}
  |f^\# \omega(T)| & = |\omega ( f_\# T )| \\ & \leq \| \omega
  \|_{\bCH^{m, \alpha}(L)} \bN(f_\# T)^{1 - \alpha} \bF(f_\# T)^\alpha
  \\ & \leq \max \{(\rmLip f)^{m-1 + \alpha}, (\rmLip f)^{m + \alpha}
  \} \| \omega \|_{\bCH^{m,\alpha}(L)} \bN(T)^{1 - \alpha} \bF(T)^\alpha.
\end{align*}
Accordingly, $f^\# \omega$ is $\alpha$-fractional and
\[
\| f^\# \omega \|_{\bCH^{m,\alpha}(K)} \leq \max \{(\rmLip f)^{m-1 +
  \alpha}, (\rmLip f)^{m + \alpha} \} \| \omega
\|_{\bCH^{m,\alpha}(L)}.
\]
In case $m = 0$, one proves similarly that $f^\# \omega$ is $\alpha$-fractional and
\[
\| f^\# \omega \|_{\bCH^{0, \alpha}(K)} \leq (\rmLip f)^\alpha \| \omega\|_{\bCH^{0, \alpha}(L)}
\]

\subsection{Hölder differential forms}
In this subsection, we claim that $\alpha$-Hölder continuous $m$-forms
are $\alpha$-fractional charges. More precisely, the map $\Lambda$
defined in Subsection~\ref{subsec:cdf} restricts to a continuous
linear map
\begin{equation}
  \label{eq:Lambdaalpha}
  \Lambda \colon \rmLip^\alpha(K; \wedge^m \R^d) \to \bCH^{m,
    \alpha}(K)
\end{equation}
Indeed, let $\omega \in \rmLip^\alpha(K; \wedge^m \R^d)$. It is
possible to extend $\omega$ to an $\alpha$-Hölder continuous $m$-form $\tilde{\omega}$
on $\R^d$ such that
\[
\rmLip^\alpha(\tilde{\omega}) \leq C \rmLip^\alpha(\omega) \text{ and
} \|\tilde{\omega}\|_\infty = \| \omega\|_{\infty}.
\]
Such an extension can be obtained, for instance, via the McShane extension theorem (see \cite[Theorem~1.33]{Weav}).
For any $\varepsilon \in \mathopen{(}0, 1\mathclose{]}$, the smooth
  $m$-form $\omega_\varepsilon = \tilde{\omega} * \Phi_\varepsilon$
  satisfies
  \[
  \| \omega_\varepsilon\|_\infty \leq \| \omega \|_\infty, \qquad
\| \tilde{\omega} - \omega_\varepsilon \|_\infty \leq C\rmLip^\alpha(\omega)
\varepsilon^\alpha \text{ and } \| \diff \omega_\varepsilon \|_\infty
\leq \frac{C}{\varepsilon^{1 - \alpha}} \rmLip^\alpha(\omega)
\]
For an arbitrary $T \in \bN_m(K)$, we have
\[
\Lambda(\omega)(T) = \int_K \langle \omega(x) - \omega_\varepsilon(x), \vec{T}(x) \rangle \diff \|T\|(x) + T(\omega_\varepsilon)
\]
From the above inequalities, we can control the first term
\[
\left| \int_K \langle \omega(x) - \omega_\varepsilon(x), \vec{T}(x)
\rangle \diff \|T\|(x) \right| \leq C\rmLip^\alpha(\omega)
\varepsilon^\alpha \bN(T)
\]
whereas,
\begin{align*}
  |T(\omega_\varepsilon)| & \leq \max \left\{
  \|\omega_\varepsilon\|_\infty, \| \diff \omega_\varepsilon \|_\infty
  \right\} \bF(T) \\ & \leq \frac{C}{\varepsilon^{1 - \alpha}} \max
  \left\{ \| \omega \|_{\infty}, \rmLip^\alpha(\omega) \right\} \bF(T)
\end{align*}
Combining the two preceding inequalities yields
\begin{equation}
  \label{eq:combining}
|\Lambda(\omega)(T)| \leq C \max \left\{ \| \omega \|_{\infty},
\rmLip^\alpha(\omega) \right\} \left( \varepsilon^\alpha \bN(T) +
\frac{\bF(T)}{\varepsilon^{1-\alpha}}\right)
\end{equation}
In case $\alpha \neq 1$, we choose $\varepsilon = \bF(T) / \bN(T)$
(which is indeed less than or equal to $1$), so that
\begin{equation}
  \label{eq:LambdaC0bis}
|\Lambda(\omega)(T)| \leq C \max \left\{ \| \omega \|_\infty,
\rmLip^\alpha(\omega) \right\} \bN(T)^{1 - \alpha} \bF(T)^\alpha
\end{equation}
thereby showing that $\Lambda(\omega)$ is $\alpha$-fractional and
$\Lambda$ is continuous, with
\begin{equation}
  \label{eq:LambdaC0}
  \|\Lambda(\omega)\|_{\bCH^{m,\alpha}(K)} \leq C \max \{ \| \omega
  \|_\infty, \rmLip^\alpha(\omega) \}.
\end{equation}
Inequalities~\eqref{eq:LambdaC0bis} and~\eqref{eq:LambdaC0} are
obtained in case $\alpha = 1$ by letting $\varepsilon$ tend to $0$
in~\eqref{eq:combining}.

\begin{prop}
  \label{prop:CH0alpha}
  Let $\alpha \in \mathopen{(} 0, 1 \mathclose{]}$. The map $\Lambda
    \colon \rmLip^\alpha(K) \to \bCH^{0, \alpha}(K)$ is a Banach space
    isomorphism.
\end{prop}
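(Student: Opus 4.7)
My plan is to reduce everything to Proposition~\ref{prop:CH0}, which already identifies $\bCH^0(K)$ with $C(K)$ via the inverse pair $\Lambda$ and $\Gamma$, where $\Gamma(\omega)(x) = \omega(\llbracket x \rrbracket)$. Since we have the continuous inclusions $\rmLip^\alpha(K) \hookrightarrow C(K)$ and $\bCH^{0,\alpha}(K) \hookrightarrow \bCH^0(K)$, and since the map $\Lambda$ under consideration is simply the restriction of the one in Proposition~\ref{prop:CH0}, the injectivity of $\Lambda \colon \rmLip^\alpha(K) \to \bCH^{0,\alpha}(K)$ is immediate.

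Continuity of $\Lambda$ has already been established in~\eqref{eq:LambdaC0}. The only real content is surjectivity together with the estimate for the inverse. Given $\omega \in \bCH^{0,\alpha}(K)$, view it as an element of $\bCH^0(K)$ and set $g \defeq \Gamma(\omega) \in C(K)$, so that $\Lambda(g) = \omega$ automatically. I would then show that $g \in \rmLip^\alpha(K)$ by testing $\omega$ on the normal $0$-currents $T_{x,y} \defeq \llbracket x \rrbracket - \llbracket y \rrbracket$ for $x, y \in K$. The key point is that $\bN(T_{x,y}) = \bM(T_{x,y}) = 2$, while $\bF(T_{x,y}) \leq |x - y|$: indeed, the segment current $S = \llbracket \mathopen{[} y, x \mathclose{]} \rrbracket$ is normal with $\partial S = T_{x,y}$ and $\bM(S) = |x - y|$, so $\bF(T_{x,y}) \leq \bM(S)$ from the second expression of the flat norm recalled in Section~\ref{sec:prelim}. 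The $\alpha$-fractional continuity of $\omega$ then yields
\[
|g(x) - g(y)| = |\omega(T_{x,y})| \leq \|\omega\|_{\bCH^{0,\alpha}(K)} \bN(T_{x,y})^{1-\alpha} \bF(T_{x,y})^\alpha \leq 2^{1-\alpha} \|\omega\|_{\bCH^{0,\alpha}(K)} |x - y|^\alpha,
\]
so $\rmLip^\alpha g \leq 2^{1-\alpha} \|\omega\|_{\bCH^{0,\alpha}(K)}$. A similar (and simpler) test against $\llbracket x \rrbracket$ gives $\|g\|_\infty \leq \|\omega\|_{\bCH^{0,\alpha}(K)}$.

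Combining these two bounds shows that $\Lambda$ has a bounded inverse, so it is a Banach space isomorphism. No step is really an obstacle here: the only care needed is in the flat-norm estimate $\bF(\llbracket x \rrbracket - \llbracket y \rrbracket) \leq |x - y|$, which however is a standard one-line fact once one recalls that the oriented segment bounds the pair of endpoints. One could alternatively invoke the open mapping theorem in place of the explicit norm estimate, but since the bound comes out for free from the test on $T_{x,y}$, writing it explicitly seems cleaner.
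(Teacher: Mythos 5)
Your proof is correct and takes essentially the same route as the paper: both directions rest on (i) the mollification estimate already recorded as~\eqref{eq:LambdaC0}, and (ii) testing an $\alpha$-fractional $0$-charge against $T_{x,y}=\llbracket x\rrbracket-\llbracket y\rrbracket$, whose flat norm is bounded by $|x-y|$ via the segment current. The only (welcome) streamlining is that you reuse~\eqref{eq:LambdaC0} directly rather than re-deriving the mollification computation for polyhedral $0$-currents as the paper does in its surjectivity step, and the $T_{x,y}$ test hands you the explicit bound $\rmLip^\alpha(\Gamma(\omega))\leq 2^{1-\alpha}\|\omega\|_{\bCH^{0,\alpha}(K)}$, so the appeal to the open mapping theorem becomes unnecessary.
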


\begin{proof}
  Recall the map $\Gamma \colon \bCH^0(K) \to C(K)$ from the proof of
  Proposition~\ref{prop:CH0}
  \[
  \Gamma(\omega)(x) = \omega(\llbracket x \rrbracket) \text{ for all }
  \omega \in \bCH^{0}(K) \text{ and } x \in K.
  \]
  It was proved that it is inverse to the map $\Lambda \colon C(K) \to
  \bCH^0(K)$. It suffices to prove that $\Gamma$ restricts to a Banach
  space isomorphism $\bCH^{0, \alpha}(K) \to \rmLip^\alpha(K)$.

  Let $\omega \in \bCH^{0, \alpha}(K)$. The function $\Gamma(\omega)$
  is $\alpha$-Hölder continuous, as for all $x, y \in K$, one has
  \begin{align*}
  \left| \Gamma(\omega)(x) - \Gamma(\omega)(y) \right| & =
  \left|\omega(\llbracket x \rrbracket - \llbracket y
  \rrbracket)\right| \\ & \leq \| \omega \|_{\bCH^{0, \alpha}(K)}
  \bN(\llbracket x \rrbracket - \llbracket y \rrbracket)^{1 -
    \alpha}\bF(\llbracket x \rrbracket - \llbracket y
  \rrbracket)^\alpha \\ & \leq C \| \omega \|_{\bCH^{0, \alpha}(K)} |x
  - y|^\alpha.
  \end{align*}
  Furthermore, for any $x \in K$,
  \[
  |\Gamma(\omega)(x)| = |\omega(\llbracket x \rrbracket)| \leq \|
  \omega \|_{\bCH^{0, \alpha}(K)}.
  \]
  This shows that $\max \{ \| \Gamma(\omega) \|_\infty,
  \rmLip^\alpha(\Gamma(\omega)) \} \leq C \| \omega \|_{\bCH^{0,
      \alpha}(K)}$, giving the continuity of $\Gamma$.

  Let $f \in \rmLip^\alpha(K)$. In particular, $f \in C(K)$ so there
  is a charge $\omega \in \bCH^0(K)$ such that $\omega =
  \Lambda(f)$. Let us prove that $\omega \in \bCH^{0,
    \alpha}(K)$. First extend $f$ to $\R^d$ while preserving the
  Hölder constant. As usual, we consider $\varepsilon \in \mathopen{(}
  0, 1\mathclose{]}$ and define $f_\varepsilon = f *
    \Phi_\varepsilon$. We have
  \[
  \| f_\varepsilon \|_\infty \leq \|f \|_{\infty, K}, \| f -
  f_\varepsilon \|_\infty \leq \varepsilon^\alpha \rmLip^\alpha(f)
  \text{ and } \rmLip f_\varepsilon = \| \diff f_\varepsilon \|_\infty
  \leq C \varepsilon^{\alpha - 1} \rmLip^\alpha f.
  \]
  Let $T = \sum_{k=0}^n a_k \llbracket x_k \rrbracket$ be a
  $0$-polyhedral current, supported in $K$.
  \[
  \omega(T) = \sum_{k=0}^n a_k f_\varepsilon(x_k) + \sum_{k=0}^n a_k
  \left( f(x_k) - f_\varepsilon(x) \right)
  \]
  Therefore,
  \begin{align*}
  |\omega(T)| & \leq |T(f_\varepsilon)| + \varepsilon^\alpha
  \rmLip^\alpha(f) \bN(T) \\ & \leq C \max \{ \|f \|_{\infty, K},
  \rmLip^\alpha f\} \left( \varepsilon^{\alpha - 1} \bF(T) +
  \varepsilon^\alpha \bN(T) \right).
  \end{align*}
  Choosing $\varepsilon = \bF(T) / \bN(T)$ yields
  \[
  |\omega(T)| \leq C \max \{ \|f \|_{\infty, K}, \rmLip^\alpha(f) \}
  \bN(T)^{1 - \alpha} \bF(T)^\alpha.
  \]
  The preceding inequality holds as well for an arbitrary $T \in
  \bN_0(K)$, as it is the $\bF$-limit of as sequence of $0$-polyhedral
  currents $(T_n)$ with $\bM(T_n) \leq \bM(T)$. This proves that
  $\omega$ is $\alpha$-fractional. Since $\Gamma(\omega) = f$, this
  concludes that the map $\Gamma$, restricted to $\bCH^{0, \alpha}(K)
  \to \rmLip^\alpha(K)$, is surjective. By the open mapping theorem,
  it is a Banach space isomorphism, and so is $\Lambda = \Gamma^{-1}
  \colon \rmLip^\alpha(K) \to \bCH^{0, \alpha}(K)$.
\end{proof}

\subsection{Fractional charges over $\R^d$}
An \emph{$\alpha$-fractional $m$-charge} over $\R^d$ is a linear functional
$\omega \colon \bN(\R^d) \to \R$ such that, for all compact $K \subset
\R^d$, there is a constant $C_K \geq 0$ such that
\[
|\omega(T)| \leq C_K \bN(T)^{1-\alpha} \bF(T)^\alpha \text{ for all }
T \in \bN_m(K).
\]
In other words, an $\alpha$-fractional charge is an $m$-charge whose
restriction to each $\bN_m(K)$ belongs to $\bCH^{m,\alpha}(K)$. We
denote by $\bCH^{m,\alpha}(\R^d)$ the space of $\alpha$-fractional
charges over $\R^d$.

We define, for each compact $K \subset \R^d$, the following seminorm
on $\bCH^{m,\alpha}(\R^d)$,
\[
\|\omega\|_{\bCH^{m, \alpha}, K} = \inf \left\{ C \geq 0 : |\omega(T)|
\leq C \bN(T)^{1-\alpha} \bF(T)^{\alpha} \text{ for all } T \in
\bN_m(K) \right\}.
\]
The family of such seminorms induces a Fréchet topology on
$\bCH^{m,\alpha}(\R^d)$. We summarize the following facts, each of
which is a straightforward adaptation of previous arguments to $\R^d$.
\begin{itemize}
\item[(A)] \emph{Exterior derivative}: the operator $\mathrm{d} \colon
  \bCH^{m}(\R^d) \to \bCH^{m+1}(\R^d)$ restricts to a linear
  continuous operator $\bCH^{m, \alpha}(\R^d) \to \bCH^{m+1,
    \alpha}(\R^d)$.
\item[(B)] \emph{Pullback by locally Lipschitz maps}: let $f \colon
  \operatorname{dom} f \to \operatorname{codom} f$ be a locally
  Lipschitz map, where the domain is either $\R^d$ a compact subset
  thereof, and likewise, the codomain is either $\R^{d'}$ a compact
  subset thereof. Then the pullback $f^\#$ is a linear continuous map
  $\bCH^{m, \alpha}(\operatorname{codom} f) \to
  \bCH^{m,\alpha}(\operatorname{dom} f)$.
\item[(C)] \emph{Hölder continuous differential forms}: the map $\Lambda
  \colon C(\R^d; \wedge^m \R^d) \to \bCH^m(\R^d)$ restricts to a
  continuous linear map $\rmLip^\alpha_{\rmloc}(\R^d; \wedge^m \R^d)
  \to \bCH^{m, \alpha}(\R^d)$, that is still injective. As such, we
  consider that
  \[
  \rmLip^\alpha_{\rmloc}(\R^d; \wedge^m \R^d) \subset \bCH^{m,
    \alpha}(\R^d).
  \]
\item[(D)] \emph{Hölder continuity of fractional $0$-charges}: the map
  $\Lambda \colon \rmLip^\alpha_{\rmloc}(\R^d) \to \bCH^{0,
  \alpha}(\R^d)$ is a Fréchet space isomorphism, whose inverse is
  given by the map $\Gamma$ defined by $\Gamma(\omega)(x) =
  \omega(\llbracket x \rrbracket)$.
\end{itemize}

We end this subsection with two technical results that give the
$1$-fractional seminorm of a smooth form. They will be used in the
proof of Theorem~\ref{thm:main} and their reading may be postponed
until that point.

\begin{prop}
  \label{prop:CH1smooth}
  Suppose $\omega \in C^\infty(\R^d; \bigwedge^m \R^d)$ and $K \subset
  \R^d$ is a compact convex set with positive Lebesgue measure.  Then
  $\| \omega \|_{\bCH^{m,1}, K} = \max \{ \| \omega \|_{\infty, K}, \|
  \diff \omega \|_{\infty, K} \}$.
\end{prop}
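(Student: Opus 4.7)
My plan is to prove the inequality in each direction separately, using the second characterization of the flat norm for the upper bound and explicit test currents concentrated at interior points of $K$ for the lower bound.

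\textbf{Upper bound.} Fix $T \in \bN_m(K)$. Since $K$ is convex, the orthogonal projection $p \colon \R^d \to K$ is $1$-Lipschitz; replacing a competitor $S \in \bN_{m+1}(\R^d)$ by $p_\# S$ in the second characterization of $\bF$ (and using $p_\# T = T$) shows that the infimum can be restricted to $S \in \bN_{m+1}(K)$ without increasing. For such a competitor I write $T(\omega) = (T - \partial S)(\omega) + S(\diff \omega)$ and bound each summand by mass times comass-sup over $K$, yielding
\[
|T(\omega)| \leq \max\{\|\omega\|_{\infty, K}, \|\diff \omega\|_{\infty, K}\}\bigl(\bM(T - \partial S) + \bM(S)\bigr).
\]
Taking the infimum over $S$ gives $|T(\omega)| \leq \max\{\|\omega\|_{\infty, K}, \|\diff\omega\|_{\infty, K}\} \bF(T)$, hence one direction of the identity.

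\textbf{Lower bound on $\|\omega\|_{\infty, K}$.} Since $K$ is convex with positive Lebesgue measure, $\mathrm{int}(K)$ is nonempty. Fix $x \in \mathrm{int}(K)$ and $\varepsilon > 0$. By definition of the comass norm I can choose a simple unit $m$-vector $\xi = v_1 \wedge \cdots \wedge v_m$ with $\langle \omega(x), \xi \rangle \geq \|\omega(x)\| - \varepsilon$, together with the \emph{constant} (hence closed) unit-comass $m$-form $\eta = v_1^* \wedge \cdots \wedge v_m^*$ satisfying $\langle \eta, \xi \rangle = 1$. For sufficiently small $r > 0$, the smooth current $T_r = \calL^d \wedge (\rho_r \xi)$, where $\rho_r \geq 0$ is a bump supported in $B(x, r) \subset K$ with $\int \rho_r = 1$, lies in $\bN_m(K)$ with $\bM(T_r) = 1$. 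Testing the defining supremum of $\bF(T_r)$ against $\eta$ gives $\bF(T_r) \geq T_r(\eta) = 1$, while $\bF(T_r) \leq \bM(T_r) = 1$, so $\bF(T_r) = 1$. By smoothness of $\omega$, $T_r(\omega) \to \langle \omega(x), \xi \rangle$ as $r \to 0$, so $\|\omega\|_{\bCH^{m,1}, K} \geq \langle \omega(x), \xi \rangle$. Letting $\varepsilon \to 0$ and optimizing over $x \in \mathrm{int}(K)$ (using continuity of $\omega$ and $K = \overline{\mathrm{int}(K)}$) yields $\|\omega\|_{\bCH^{m,1}, K} \geq \|\omega\|_{\infty, K}$.

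\textbf{Lower bound on $\|\diff\omega\|_{\infty, K}$.} The identity $\diff\omega(T) = \omega(\partial T)$ combined with $\bF(\partial T) \leq \bF(T)$ gives $\|\diff\omega\|_{\bCH^{m+1,1}, K} \leq \|\omega\|_{\bCH^{m,1}, K}$. When $m < d$, applying the previous step to the smooth $(m+1)$-form $\diff\omega$ gives $\|\diff\omega\|_{\infty, K} \leq \|\diff\omega\|_{\bCH^{m+1,1}, K}$; when $m = d$, $\diff\omega \equiv 0$ and the bound is trivial. The three inequalities combine to give the reverse direction.

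\textbf{Main obstacle.} The delicate point is producing test currents $T_r$ whose flat norm \emph{equals} their mass, rather than merely being bounded by it; otherwise the test-current argument would give an inequality in the wrong direction. The trick is to test $\bF$ against a constant $m$-form dual to the simple direction $\xi$: being closed, such a form automatically saturates the constraint $\|\diff \eta\|_\infty \leq 1$ for free, producing the clean matching $\bF(T_r) = \bM(T_r) = 1$ that extracts the pointwise comass of $\omega$.
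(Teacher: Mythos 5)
Your proof is correct and follows essentially the same route as the paper: the upper bound uses the $\inf$-formula for $\bF$ restricted to competitors supported in $K$ (via the orthogonal projection, exactly as in the paper), and the lower bound tests $\omega$ against smooth currents of the form $\calL^d \wedge (\rho\,\xi)$ concentrated near an interior point, which is a pointwise version of the paper's duality with $L^1$-normalized vectorfields $\zeta$; your treatment of $\|\diff\omega\|_{\infty,K}$ via the nonexpansivity of $\diff$ on $\bCH^{\cdot,1}$ is a slightly more modular packaging of the paper's direct test against $\partial(\calL^d\wedge\xi)$. One small correction to your \textbf{Main obstacle} paragraph: the equality $\bF(T_r)=\bM(T_r)$ is not actually needed, and the paper does not prove it. Since $|\omega(T_r)| \leq \|\omega\|_{\bCH^{m,1},K}\,\bF(T_r)$, the one-sided estimate $\bF(T_r)\leq\bM(T_r)=1$ already yields $|\omega(T_r)|\leq\|\omega\|_{\bCH^{m,1},K}$, which is the inequality in the desired direction (a smaller $\bF(T_r)$ would only strengthen, not weaken, the resulting lower bound on the charge norm). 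Your constant-form argument establishing $\bF(T_r)=1$ is correct but superfluous.
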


\begin{proof}
  The convexity of $K$ guarantees that for all $T \in \bN_m(K)$, one has
  \begin{align*}
  \bF(T) & = \inf \{ \bM(A) + \bM(B) : T = A + \partial B \text{ with } A \in \bN_m(\R^d) \text { and } B \in \bN_{m+1}(\R^d) \} \\
  & =  \inf \{ \bM(A) + \bM(B) : T = A + \partial B \text{ with } A \in \bN_m(K) \text { and } B \in \bN_{m+1}(K) \}
  \end{align*}
  Indeed, one can always replace $A$ and $B$ by their pushforwards $p_\#$ and $p_\# B$ by the orthogonal projection $p$ onto $K$. Since $p$ is $1$-Lipschitz, this operation does not increase the mass and the identity $T = p_\# A + \partial (p_\# B)$ is valid.
  
  Therefore, we consider $A \in \bN_m(K)$ and $B \in
  \bN_{m+1}(K)$ such that $T = A + \partial B$ and compute
  \[
  |\omega(T)| = |A(\omega)| + |B(\diff \omega)| \leq \left( \bM(A) +
  \bM(B) \right) \max \{ \| \omega \|_{\infty, K}, \| \diff \omega
  \|_{\infty, K} \}.
  \]
  Taking the infimum over $A, B$, one obtains that $|\omega(T)| \leq
  \bF(T) \max \{ \| \omega \|_{\infty, K}, \| \diff \omega \|_{\infty,
    K} \}$. This means that $\| \omega \|_{\bCH^{m,1}, K} \leq \max \{
  \| \omega \|_{\infty, K}, \| \diff \omega \|_{\infty, K} \}$.

  As the Lebesgue measure of the convex set $K$ is nonzero, one can
  replace the $\| \cdot \|_{\infty, K}$ seminorms with essential
  suprema. One has then
  \begin{gather*}
  \| \omega \|_{\infty, K} = \sup_\zeta \int \langle \omega(x),
  \zeta(x) \rangle \diff x = \sup_\zeta \omega(\calL^d \wedge \zeta)
  \\ \| \diff \omega \|_{\infty, K} = \sup_\xi \int \langle (\diff
  \omega)(x), \xi(x) \rangle \diff x = \sup_{\xi}
  \omega\left(\partial(\calL^d \wedge \xi) \right)
  \end{gather*}
  where $\zeta$ (resp. $\xi$) ranges over the summable
  $m$-vector fields (resp. $(m+1)$-vector fields) supported in $K$ of
  $L^1$-norm 1. As
  \[
  \bF(\calL^d \wedge \zeta) \leq \bM(\calL^d \wedge \zeta) \leq 1
  \text{ and } \bF\left(\partial(\calL^d \wedge \xi) \right) \leq
  \bM\left(\partial(\calL^d \wedge \xi) \right) \leq 1,
  \]
  one finally proves the desired inequality.
\end{proof}

\begin{coro}
  \label{cor:411}
  If $\omega \in C^\infty(\R^d, \bigwedge^m \R^d)$, $\eta \in
  C^\infty(\R^d, \bigwedge^{m'} \R^d)$ and $K$ is a compact set that
  satisfies (A) and (B), then $\| \omega \wedge \eta
  \|_{\bCH^{m+m',1}, K} \leq C \| \omega \|_{\bCH^{m, 1}, K} \| \eta
  \|_{\bCH^{m', 1}, K}$, for some constant $C$.
\end{coro}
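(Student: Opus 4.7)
The plan is to reduce the charge-norm inequality to a pointwise estimate on forms via Proposition~\ref{prop:CH1smooth}, which identifies $\|\cdot\|_{\bCH^{\cdot,1},K}$ with $\max\{\|\cdot\|_{\infty,K},\|\diff(\cdot)\|_{\infty,K}\}$ whenever the form is smooth and $K$ is compact convex with positive Lebesgue measure (I read conditions (A) and (B) of the corollary as exactly these hypotheses, inherited from Proposition~\ref{prop:CH1smooth}). Since $\omega$, $\eta$, and $\omega\wedge\eta$ are all smooth, the corollary reduces to proving
\[
\max\{\|\omega\wedge\eta\|_{\infty,K},\|\diff(\omega\wedge\eta)\|_{\infty,K}\} \leq C\,\max\{\|\omega\|_{\infty,K},\|\diff\omega\|_{\infty,K}\}\cdot\max\{\|\eta\|_{\infty,K},\|\diff\eta\|_{\infty,K}\}.
\]

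First I would invoke the standard sub-multiplicativity of the comass norm on the exterior algebra: there is a constant $C_1 = C_1(d,m,m')$ such that $\|\xi\wedge\zeta\| \leq C_1\|\xi\|\|\zeta\|$ for every $\xi\in\wedge^m\R^d$ and $\zeta\in\wedge^{m'}\R^d$ (cf.\ \cite[1.8.1]{Fede}, since comass and mass are equivalent up to dimensional constants and mass is sub-multiplicative under $\wedge$). Taking suprema over $K$ yields $\|\omega\wedge\eta\|_{\infty,K}\leq C_1\|\omega\|_{\infty,K}\|\eta\|_{\infty,K}$.

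Next I would apply the graded Leibniz rule $\diff(\omega\wedge\eta)=\diff\omega\wedge\eta+(-1)^m\omega\wedge\diff\eta$, which is valid pointwise since $\omega,\eta$ are smooth, and combine it with the sub-multiplicativity estimate applied twice (once in bidegree $(m+1,m')$, once in bidegree $(m,m'+1)$) to obtain
\[
\|\diff(\omega\wedge\eta)\|_{\infty,K} \leq C_2\bigl(\|\diff\omega\|_{\infty,K}\|\eta\|_{\infty,K} + \|\omega\|_{\infty,K}\|\diff\eta\|_{\infty,K}\bigr),
\]
with $C_2$ depending on $d,m,m'$. Bounding each of the four factors by the corresponding maximum $\max\{\|\cdot\|_{\infty,K},\|\diff(\cdot)\|_{\infty,K}\}$, taking the larger of this bound and the comass estimate from the previous paragraph, and finally invoking Proposition~\ref{prop:CH1smooth} for each of the three forms $\omega$, $\eta$, $\omega\wedge\eta$ (note the smoothness of $\omega\wedge\eta$ ensures the proposition applies) delivers the announced inequality with $C=2\max\{C_1,C_2\}$.

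The argument is essentially routine; the only mild subtlety is that Proposition~\ref{prop:CH1smooth} requires the form to be smooth on all of $\R^d$ and $K$ to be compact convex with $\calL^d(K)>0$, which is precisely what conditions (A) and (B) provide and is automatically inherited by the smooth form $\omega\wedge\eta$. There is no genuine obstacle.
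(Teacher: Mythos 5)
Your proof is correct and matches the paper's approach: the paper's one-line argument (Leibniz identity plus Federer 1.8.1 for the constant) is exactly what you have unpacked, using Proposition~\ref{prop:CH1smooth} to identify the $\bCH^{\cdot,1}$-seminorm of a smooth form with $\max\{\|\cdot\|_{\infty,K},\|\diff\cdot\|_{\infty,K}\}$ and then combining the comass sub-multiplicativity with the graded Leibniz rule. Your reading of conditions (A) and (B) as "compact convex with positive Lebesgue measure" is also the intended one.
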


\begin{proof}
  It is a consequence of the identity $\diff (\omega \wedge \eta) =
  \diff \omega \wedge \eta + (-1)^m \omega \wedge \diff \eta$. The
  reader interested in estimating the constant may consult
  \cite[1.8.1]{Fede}.
\end{proof}

\subsection{Duality and a generalized Gauss-Green formula for fractal boundaries}
\begin{thm}
  \label{thm:duality}
  Let $K$ be a compact convex subset of $\R^d$. Then
  $\bCH^{m,\alpha}(K)$ is (isometrically isomorphic to) the dual space
  of $\bF_m^\alpha(K)$. The corresponding duality bracket $\langle
  \cdot, \cdot \rangle \colon \bCH^{m,\alpha}(K) \times
  \bF_m^\alpha(K) \to \R$ satisfies
  \begin{equation}
    \label{eq:dualB}
    \langle \omega, T \rangle = \omega(T)
  \end{equation}
  for all $\omega \in \bCH^{m,\alpha}(K)$ and $T \in \bN_m(K)$.
\end{thm}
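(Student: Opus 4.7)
The plan is to construct an isometric linear bijection $\Psi \colon \bCH^{m,\alpha}(K) \to \bF_m^\alpha(K)^*$ whose inverse is restriction to $\bN_m(K)$. Since $K$ is convex, Proposition~\ref{prop:basic}(E) tells us that $\bN_m(K)$ is dense in $\bF_m^\alpha(K)$, so any linear functional on $\bN_m(K)$ that is $\bF^\alpha$-continuous admits a unique continuous extension to $\bF_m^\alpha(K)$. The content of the theorem therefore reduces to the quantitative comparison
\begin{equation*}
  |\omega(T)| \leq \|\omega\|_{\bCH^{m,\alpha}(K)}\,\bF^\alpha(T) \qquad \text{for all } T \in \bN_m(K),
\end{equation*}
together with the converse estimate, which is immediate: any $\phi \in \bF_m^\alpha(K)^*$ restricts to a linear functional on $\bN_m(K)$, and the trivial decomposition $T_0=T,\,T_k=0$ for $k\geq 1$ gives $\bF^\alpha(T)\leq \bN(T)^{1-\alpha}\bF(T)^\alpha$, whence $|\phi(T)|\leq \|\phi\|\,\bN(T)^{1-\alpha}\bF(T)^\alpha$ shows $\phi_{\mid \bN_m(K)} \in \bCH^{m,\alpha}(K)$ with norm at most $\|\phi\|$.

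The heart of the proof is thus to show, for a given $T\in \bN_m(K)$ and any $\alpha$-fractional decomposition $T = \sum_k T_k$ with each $T_k$ supported in $K$, that
\begin{equation*}
  \omega(T) = \sum_{k=0}^\infty \omega(T_k).
\end{equation*}
Once this identity is available, applying $|\omega(T_k)|\leq \|\omega\|_{\bCH^{m,\alpha}(K)}\bN(T_k)^{1-\alpha}\bF(T_k)^\alpha$ termwise and taking the infimum over decompositions yields the desired bound. The naive attempt, approximating $T$ by the partial sums $S_N=\sum_{k\leq N}T_k$ and invoking the charge continuity, fails because $\bN(S_N)$ need not stay bounded even though $\bF(T-S_N)\to 0$. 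I would circumvent this by regularization. Let $p\colon \R^d \to K$ be the orthogonal projection, extend $\omega$ to $p^\# \omega \in \bCH^{m,\alpha}(\R^d)$ (with $\|p^\#\omega\|_{\bCH^{m,\alpha},K'}\leq \|\omega\|_{\bCH^{m,\alpha}(K)}$ for any compact $K'$, since $p$ is $1$-Lipschitz), and set $\omega_\varepsilon = (p^\#\omega)*\Phi_\varepsilon$. By Proposition~\ref{prop:smooth}, $\omega_\varepsilon$ is a smooth $m$-form, so it acts on a normal current $U$ as $U(\omega_\varepsilon)$ and is therefore $\bF$-continuous on $\bN_m(\R^d)$. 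Consequently, for each fixed $\varepsilon>0$, $\omega_\varepsilon(S_N) \to \omega_\varepsilon(T)$ as $N\to\infty$, which gives $\omega_\varepsilon(T) = \sum_k \omega_\varepsilon(T_k)$.

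The main obstacle is then to pass $\varepsilon \to 0$ under the summation. This is handled by a dominated convergence argument for series. Using the definition $(p^\#\omega*\Phi_\varepsilon)(U) = (p^\#\omega)(U*\Phi_\varepsilon)$ together with Proposition~\ref{prop:22}(E), which guarantees $\bN(U*\Phi_\varepsilon)\leq \bN(U)$ and $\bF(U*\Phi_\varepsilon)\leq \bF(U)$, one obtains the uniform bound
\begin{equation*}
  |\omega_\varepsilon(U)| \leq \|\omega\|_{\bCH^{m,\alpha}(K)}\,\bN(U)^{1-\alpha}\bF(U)^\alpha
\end{equation*}
for $U\in \bN_m(K)$ and $\varepsilon\leq 1$. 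In particular, $|\omega_\varepsilon(T_k)|$ is dominated by the summable sequence $\|\omega\|_{\bCH^{m,\alpha}(K)}\bN(T_k)^{1-\alpha}\bF(T_k)^\alpha$. Since $\omega*\Phi_\varepsilon \to \omega$ weakly as noted in Subsection~\ref{subsec:chargesRd}, both $\omega_\varepsilon(T)\to \omega(T)$ and, termwise, $\omega_\varepsilon(T_k)\to \omega(T_k)$; dominated convergence then gives $\omega(T) = \sum_k \omega(T_k)$, completing the key estimate. The two directions of the inequality combine to give $\|\Psi(\omega)\|_{\bF_m^\alpha(K)^*} = \|\omega\|_{\bCH^{m,\alpha}(K)}$, and~\eqref{eq:dualB} holds by construction of $\Psi$ as an extension from $\bN_m(K)$.
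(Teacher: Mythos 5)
Your proof is correct and follows essentially the same route as the paper's: both reduce the matter to showing that for a fixed charge $\omega$ and any $\alpha$-fractional decomposition, $\omega$ commutes with the sum, and both establish this by pulling $\omega$ back to $\R^d$ via the projection $p$, mollifying to get smooth forms $(p^\#\omega)*\Phi_\varepsilon$ (which are $\bF$-continuous and therefore sum-compatible), deriving a uniform $\varepsilon$-independent bound from Proposition~\ref{prop:22}(E), and passing $\varepsilon \to 0$ by dominated convergence together with the weak convergence $(p^\#\omega)*\Phi_\varepsilon \to p^\#\omega$. Your framing — proving the key estimate on $\bN_m(K)$ and then extending by density rather than defining the bracket on all of $\bF^\alpha_m(K)$ and checking well-definedness — is a cosmetic reorganization of the same argument, not a different proof.
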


\begin{proof}
  First we define the duality bracket. For $\omega \in \bCH^{m,
    \alpha}(K)$ and $T \in \bF_m^\alpha(K)$ we set
  \begin{equation}
    \label{eq:defbracket}
    \langle \omega, T \rangle = \sum_{k=0}^\infty \omega(T_k)
  \end{equation}
  where $(T_k)$ is a decomposition of $T$ as in the
  definition~\ref{def:frac} and all the normal currents $T_k$ are
  supported in $K$. The right-hand side series is convergent, as
  \[
  \sum_{k=0}^\infty |\omega(T_k)| \leq \| \omega
  \|_{\bCH^{m,\alpha}(K)} \sum_{k=0}^\infty \bN(T_k)^{1 - \alpha}
  \bF(T_k)^\alpha < \infty
  \]
  We need however to show that the right-hand side
  of~\eqref{eq:defbracket} does not depend on the choice of the
  decomposition $(T_k)$. We reduce to the case where $T = 0$. First we
  define $\hat{\omega} = p^{\#} \omega \in \bCH^{m,\alpha}(\R^d)$,
  where $p \colon \R^d \to K$ is the orthogonal projection onto
  $K$. For $\varepsilon > 0$, we know that the convolution
  $\hat{\omega} * \Phi_\varepsilon$ is a smooth $m$-form. As $\sum_k
  T_k = 0$ in flat norm (and therefore weakly), one has
  \[
  \sum_{k=0}^\infty \hat{\omega} * \Phi_\varepsilon (T_k) = \lim_{n
    \to \infty} \hat{\omega} * \Phi_\varepsilon \left( \sum_{k=0}^n
  T_k \right) = 0
  \]
  In addition,
  \begin{align*}
    |\hat{\omega} * \Phi_\varepsilon (T_k)| & \leq |\omega(p_\# (T *
    \Phi_\varepsilon)) | \\ & \leq \| \omega \|_{\bCH^{m,\alpha}(K)}
    \bN( p_\#(T_k * \Phi_\varepsilon))^{1 - \alpha} \bF(p_\#(T_k *
    \Phi_\varepsilon))^\alpha \\ & \leq \| \omega
    \|_{\bCH^{m,\alpha}(K)} \bN(T_k)^{1 - \alpha} \bF(T_k)^\alpha
  \end{align*}
  and for all $k \geq 0$, one has
  \[
  \lim_{\varepsilon \to 0} \hat{\omega} * \Phi_\varepsilon(T_k) =
  \hat{\omega}(T_k) = \omega(T_k)
  \]
  This follows from the continuity of charges, as the normal currents
  $(T_k * \Phi_\varepsilon)_{0 < \varepsilon \leq 1}$ have uniformly
  bounded normal masses, are supported in $B(\spt T_k, 1)$ and $T_k *
  \Phi_\varepsilon \to T_k$ in flat norm by Proposition~\ref{prop:22}.
  By the Lebesgue dominated convergence theorem,
  \[
  \sum_{k=0}^\infty \omega(T_k) = \lim_{\varepsilon \to 0}
  \sum_{k=0}^\infty \hat{\omega}* \Phi_\varepsilon (T_k) = 0
  \]
  as desired.

  The map $T \mapsto \langle \omega, T \rangle$ is
  $\bF^\alpha$-continuous. Indeed, by~\eqref{eq:defbracket}, one has
  \[
  |\langle \omega, T \rangle| \leq \|\omega \|_{\bCH^{m,\alpha}(K)}
  \sum_{k=0}^\infty \bN(T_k)^{1 - \alpha} \bF(T_k)^\alpha
  \]
  and by passing to the infimum $|\langle \omega, T \rangle| \leq \|
  \omega\|_{\bCH^{m,\alpha}(K)} \bF^\alpha(T)$. Hence, the map
  \[
  \Upsilon \colon \bCH^{m,\alpha}(K) \to \bF^\alpha_m(K)^* \colon
  \omega \mapsto \langle \omega, \cdot \rangle
  \]
  is well-defined and $\| \Upsilon \| \leq 1$.

  Finally, $\Upsilon$ is surjective. Indeed, consider a linear
  continuous functional $\varphi \in \bF^\alpha_m(K)^*$ and denote by
  $\omega$ its restriction to $\bN_m(K)$. Then for all $T \in
  \bN_m(K)$, one has
  \[
  |\omega(T)| = |\varphi(T)| \leq \| \varphi \| \bF^\alpha(T) \leq \|
  \varphi \| \bN(T)^{1 - \alpha} \bF(T)^\alpha
  \]
  Accordingly, $\omega \in \bCH^{m,\alpha}(K)$ and it is
  straightforward that $\varphi = \Upsilon(\omega)$ by density of
  $\bN_m(K)$ in $\bF^\alpha_m(K)$ and $\|\omega\|_{\bCH^{m,\alpha}(K)}
  \leq \| \Upsilon(\omega) \|$. We conclude that $\Upsilon$ is a
  surjective isometry.
\end{proof}
Observe that if \( K \) is convex, Stokes' formula  
\[
\langle \diff \omega, T \rangle = \langle \omega, \partial T \rangle,
\]
which is valid by definition for \( \omega \in \bCH^{m-1, \alpha}(K)
\) and \( T \in \bN_m(K) \), extends by density to all \( T \in
\bF^\alpha_m(K) \).  Setting \( m = d \), \( T = \llbracket U
\rrbracket \), where \( U \) is a bounded open set with finite
\((1-\alpha)\)-perimeter, and choosing \( \omega \) as the
\(\alpha\)-fractional charge associated with an \(\alpha\)-Hölder
continuous \((d-1)\)-form, one arrives at a generalized Gauss-Green
theorem.  Notably, this framework encompasses the case where \( U \)
is \((d-1+\alpha)\)-summable, because of
Proposition~\ref{prop:firstexample}, thereby extending the generalized
Gauss-Green formula established by J. Harrison and A. Norton in
\cite{HarrNort}.

\subsection{Weak* convergence of fractional charges}
We say that a sequence $(\omega_n)$ in $\bCH^{m,\alpha}(K)$
\emph{converges weakly-*} to $\omega \in \bCH^{m, \alpha}(K)$ whenever
\begin{itemize}
\item[(A)] the sequence $(\omega_n)$ is bounded in $\bCH^{m, \alpha}(K)$;
\item[(B)] $\omega_n \to \omega$ in $\bCH^{m}(K)$.
\end{itemize}
One could prove that this notion of weak* convergence corresponds to the duality formulation given in Theorem~\ref{thm:duality} whenever $K$ is convex; however, we shall not rely on this more precise statement.

The next proposition, which establishes a compactness property
of $\bCH^{m, \alpha}(K)$, implies that condition (B) can be replaced with
\begin{itemize}
\item[(B')] $\omega_n \to \omega$ weakly in $\bCH^m(K)$, that is,
  $\omega_n(T) \to \omega(T)$ for all $T \in \bN_m(K)$.
\end{itemize}
\begin{prop}
  Any bounded sequence in $\bCH^{m, \alpha}(K)$ has a subsequence that
  converges in $\bCH^m(K)$ to an $\alpha$-fractional charge.
\end{prop}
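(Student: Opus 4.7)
The plan is to combine two facts established earlier: the compactness criterion for ordinary charges (Proposition~\ref{prop:compactCH}) and the lower semicontinuity of the fractional norm under weak convergence. The bridge between the two is Young's inequality, which converts the $\alpha$-fractional continuity estimate into the uniform additive estimate required by Proposition~\ref{prop:compactCH}.

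Let $(\omega_n) \subset \bCH^{m,\alpha}(K)$ be bounded, say $M \defeq \sup_n \| \omega_n \|_{\bCH^{m,\alpha}(K)} < \infty$. First I would apply Young's inequality in the form
\[
a^{1-\alpha} b^{\alpha} \leq (1-\alpha)\lambda\, a + \alpha \lambda^{-(1-\alpha)/\alpha}\, b
\]
valid for $a, b, \lambda > 0$, with $a = \bN(T)$ and $b = \bF(T)$. This shows that for each $\varepsilon > 0$, choosing $\lambda$ so that $(1-\alpha)M\lambda = \varepsilon$, there exists $\theta = \theta(\varepsilon, \alpha, M) \geq 0$ such that
\[
|\omega_n(T)| \leq M \bN(T)^{1-\alpha} \bF(T)^{\alpha} \leq \varepsilon \bN(T) + \theta \bF(T)
\]
for all $n$ and every $T \in \bN_m(K)$. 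Crucially, $\theta$ does not depend on $n$.

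The uniform continuity estimate just obtained is exactly condition (B) of Proposition~\ref{prop:compactCH} applied to $\Omega = \{\omega_n : n \geq 0\}$. Hence $\Omega$ is relatively compact in $\bCH^m(K)$, and we may extract a subsequence $(\omega_{n_k})$ that converges in $\bCH^m(K)$ to some $\omega \in \bCH^m(K)$.

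It remains to show that the limit is $\alpha$-fractional. For any fixed $T \in \bN_m(K)$, passing to the limit in the inequality $|\omega_{n_k}(T)| \leq M \bN(T)^{1-\alpha} \bF(T)^{\alpha}$ (using convergence in $\bCH^m(K)$, hence at the level of evaluation on each normal current) yields $|\omega(T)| \leq M \bN(T)^{1-\alpha} \bF(T)^{\alpha}$. Thus $\omega \in \bCH^{m,\alpha}(K)$ with $\|\omega\|_{\bCH^{m,\alpha}(K)} \leq M$, which also confirms the lower semicontinuity property mentioned in the discussion preceding the statement. No step presents a genuine obstacle; the only subtle point is to verify that the $\theta$ produced by Young's inequality can indeed be chosen independently of $n$, which follows immediately from the uniform bound $\sup_n \|\omega_n\|_{\bCH^{m,\alpha}(K)} \leq M$.
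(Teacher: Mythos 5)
Your proof is correct and matches the paper's argument: both use Young's inequality to convert the uniform fractional bound $\sup_n\|\omega_n\|_{\bCH^{m,\alpha}(K)} \leq M$ into the uniform estimate $|\omega_n(T)| \leq \varepsilon\bN(T) + \theta(\varepsilon)\bF(T)$ required by Proposition~\ref{prop:compactCH}, then invoke that compactness criterion. The only cosmetic difference is that the paper phrases the final step as an appeal to the lower semicontinuity of $\|\cdot\|_{\bCH^{m,\alpha}(K)}$ under weak convergence, whereas you make this explicit by passing to the limit in $|\omega_{n_k}(T)| \leq M\bN(T)^{1-\alpha}\bF(T)^\alpha$ for each fixed $T$ — which is precisely how one would prove that lower semicontinuity.
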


\begin{proof}
  First observe that $\| \cdot \|_{\bCH^{m, \alpha}(K)}$ (defined on
  $\bCH^m(K)$ with values in $[0, \infty]$) is lower semi-continuous
  with respect to weak convergence. Therefore, we only need to check
  that a sequence $(\omega_n)$ that satisfies
  \[
  M \defeq \sup_n \| \omega_n \|_{\bCH^{m, \alpha}(K)} < \infty
  \]
  has a convergent subsequence in $\bCH^m(K)$. This is an easy
  consequence of the compactness criterion
  (Proposition~\ref{prop:compactCH}), as for any $T \in \bN_m(K)$, any
  integer $n$ and $\varepsilon > 0$, one has, by Young's inequality,
  \begin{align*}
  |\omega_n(T)| 
  & \leq M \bN(T)^{1 - \alpha} \bF(T)^\alpha \\
  & \leq \left( \varepsilon \bN(T) \right)^{1 - \alpha} \left( \frac{M^{1/\alpha}}{\varepsilon^{(1-\alpha)/\alpha}} \bF(T) \right)^\alpha \\
  &  \leq (1 - \alpha)\varepsilon \bN(T) +
  \frac{\alpha M^{1/\alpha}}{\varepsilon^{(1-\alpha)/\alpha}}
  \bF(T) \\
  & \leq \varepsilon \bN(T) +
   \frac{\alpha M^{1/\alpha}}{\varepsilon^{(1-\alpha)/\alpha}}
  \bF(T). \qedhere
  \end{align*}
\end{proof}

The definition of weak* convergence is adapted for charges over
$\R^d$ as follows. We say that $\omega_n \to \omega$ weakly-* in
$\bCH^{m, \alpha}(\R^d)$ whenever
\begin{itemize}
\item[(A)] $(\omega_n)$ is bounded in $\bCH^{m, \alpha}(\R^d)$. As the
  topology of $\bCH^{m, \alpha}(\R^d)$ is induced by the family of
  seminorms $\| \cdot \|_{\bCH^{m, \alpha}, K}$, this means for
  \[
  \sup_n \| \omega_n \|_{\bCH^{m,\alpha}, K} < \infty \text{ for any
    compact } K \subset \R^d;
  \]
\item[(B)] $\omega_n \to \omega$ in $\bCH^{m}(\R^d)$.
\end{itemize}
As before, one proves that (B) can be replaced with the weaker
\begin{itemize}
\item[(B')] $\omega_n \to \omega$ weakly in $\bCH^{m}(\R^d)$.
\end{itemize}
The smoothing of charges provides an example of weak*
convergence. Precise estimates are given in the next proposition. We recall that $B(K, \varepsilon)$ denotes the closed tubular $\varepsilon$-neighborhood of a compact set $K \subset \R^d$, see Subsection~\ref{subsec:notations}.
\begin{prop}
  \label{prop:estimatesSmoothing}
  Let $\omega \in \bCH^{m, \alpha}(\R^d)$, let $K \subset \R^d$ be
  compact and $\varepsilon \in \mathopen{(} 0, 1 \mathclose{]}$. We have
  \begin{itemize}
  \item[(A)] $\| \omega * \Phi_\varepsilon \|_{\bCH^{m, \alpha}, K}
    \leq \| \omega \|_{\bCH^{m, \alpha}, B(K, \varepsilon)}$;
  \item[(B)] $\| \omega * \Phi_\varepsilon \|_{\bCH^{m, 1}, K} \leq C
    \varepsilon^{\alpha - 1} \| \omega \|_{\bCH^{m, \alpha}, B(K,
      \varepsilon)}$;
  \item[(C)] $\| \omega - \omega * \Phi_\varepsilon \|_{\bCH^m, K}
    \leq C \varepsilon^\alpha \| \omega \|_{\bCH^{m, \alpha}, B(K,
      \varepsilon)}$.
  \end{itemize}
\end{prop}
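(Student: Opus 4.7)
The plan is to verify all three estimates by a single direct computation that, for a test current $T \in \bN_m(K)$, rewrites the pairing with the smoothed charge as $(\omega * \Phi_\varepsilon)(T) = \omega(T * \Phi_\varepsilon)$ (using that $\Phi$ is even, so $\check{\Phi}_\varepsilon = \Phi_\varepsilon$), and then applies the defining inequality of an $\alpha$-fractional charge to the test current $T * \Phi_\varepsilon$. Since Proposition~\ref{prop:22}(H) gives $\spt(T * \Phi_\varepsilon) \subset B(\spt T, \varepsilon) \subset B(K, \varepsilon)$, the relevant seminorm on the right-hand side is $\| \omega \|_{\bCH^{m, \alpha}, B(K, \varepsilon)}$, and the common starting point reads
\[
|\omega(T * \Phi_\varepsilon)| \leq \| \omega \|_{\bCH^{m, \alpha}, B(K, \varepsilon)} \bN(T * \Phi_\varepsilon)^{1 - \alpha} \bF(T * \Phi_\varepsilon)^\alpha.
\]

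For (A), I would feed in the bounds $\bN(T * \Phi_\varepsilon) \leq \bN(T)$ and $\bF(T * \Phi_\varepsilon) \leq \bF(T)$ from Proposition~\ref{prop:22}(E); the right-hand side becomes $\| \omega \|_{\bCH^{m, \alpha}, B(K, \varepsilon)} \bN(T)^{1 - \alpha} \bF(T)^\alpha$, which by definition of the seminorm $\| \cdot \|_{\bCH^{m, \alpha}, K}$ yields (A). For (B), the seminorm $\| \cdot \|_{\bCH^{m, 1}, K}$ calls for a bound of the form $C \bF(T)$, so I would instead feed in the smoothing estimate $\bN(T * \Phi_\varepsilon) \leq C \varepsilon^{-1} \bF(T)$ from Proposition~\ref{prop:22}(F) together with $\bF(T * \Phi_\varepsilon) \leq \bF(T)$ from (E); the factor $(C \varepsilon^{-1})^{1 - \alpha}$ then produces the expected $\varepsilon^{\alpha - 1}$, while the two copies of $\bF(T)$ recombine to give $\bF(T)^1$, resulting in the target inequality.

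For (C), I would first write
\[
(\omega - \omega * \Phi_\varepsilon)(T) = \omega(T - T * \Phi_\varepsilon)
\]
and observe that $T - T * \Phi_\varepsilon$ is still supported in $B(K, \varepsilon)$. Bounding its normal mass by $2 \bN(T)$ via the triangle inequality and Proposition~\ref{prop:22}(E), and its flat norm by $\varepsilon \bN(T)$ via Proposition~\ref{prop:22}(G), and then applying the $\alpha$-fractional inequality over $B(K, \varepsilon)$ delivers
\[
|\omega(T - T * \Phi_\varepsilon)| \leq 2^{1 - \alpha} \varepsilon^\alpha \| \omega \|_{\bCH^{m, \alpha}, B(K, \varepsilon)} \bN(T).
\]
Since the seminorm $\| \cdot \|_{\bCH^m, K}$ is tested against $T$ of normal mass at most $1$, (C) follows.

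There is no substantial obstacle: all three inequalities reduce to the definition of an $\alpha$-fractional charge once the smoothing estimates of Proposition~\ref{prop:22}(E)--(G) and the support control (H) are combined with the interpolation factor $\bN^{1 - \alpha} \bF^\alpha$. The only subtlety, amounting to bookkeeping, is to match the correct smoothing estimate to the seminorm appearing on the left-hand side of each of the three statements.
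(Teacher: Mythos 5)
Your proposal is correct and follows essentially the same argument as the paper: rewrite the pairing as $\omega(T * \Phi_\varepsilon)$ (resp.\ $\omega(T - T*\Phi_\varepsilon)$), invoke the support estimate of Proposition~\ref{prop:22}(H), and then match the appropriate smoothing bound from Proposition~\ref{prop:22}(E), (F) or (G) to the interpolation factor $\bN^{1-\alpha}\bF^\alpha$. The bookkeeping in each of (A), (B), (C) matches the paper's proof line by line.
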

\begin{proof}
  Let $T \in \bN_m(K)$ be arbitrary. Regarding (A), we have
  \begin{equation}
    \label{eq:proofS}
    |\omega * \Phi_\varepsilon(T)| = |\omega(T * \Phi_\varepsilon)|
    \leq \| \omega \|_{\bCH^{m,\alpha}(K_\varepsilon)} \bN(T *
    \Phi_\varepsilon)^{1-\alpha} \bF(T * \Phi_\varepsilon)^\alpha
  \end{equation}
  because $\spt (T * \Phi_\varepsilon) \subset B(K, \varepsilon)$. By
  Proposition~\ref{prop:22}(E),
  \[
  |\omega * \Phi_\varepsilon(T)| \leq \| \omega \|_{\bCH^{m,\alpha},
    B(K, \varepsilon)} \bN(T)^{1 - \alpha} \bF(T)^\alpha.
  \]
  Therefore $\|\omega * \Phi_\varepsilon\|_{\bCH^{m, \alpha},K} \leq
  \|\omega \|_{\bCH^{m, \alpha}, B(K,\varepsilon)}$.

  (B) is obtained by combining~\eqref{eq:proofS} with
  Proposition~\ref{prop:22}(F).

  (C). This time,
  \begin{align*}
    | (\omega - \omega * \Phi_\varepsilon)(T) | & = |\omega(T - T *
    \Phi_\varepsilon)| \\ & \leq \| \omega \|_{\bCH^{m, \alpha}, B(K,
      \varepsilon)} \bF(T - T * \Phi_\varepsilon)^\alpha \bN(T - T *
    \Phi_\varepsilon)^{1 - \alpha} \\ & \leq C \| \omega \|_{\bCH^{m,
        \alpha}, B(K, \varepsilon)} \varepsilon^\alpha \bN(T)^\alpha
    \left( \bN(T) + \bN(T * \Phi_\varepsilon) \right)^{1 - \alpha} &
    \text{Prop.~\ref{prop:22}(F)} \\ & \leq C \| \omega
    \|_{\bCH^{m,\alpha}, B(K, \varepsilon)} \varepsilon^\alpha \bN(T)
    & \text{Prop.~\ref{prop:22}(E)}
  \end{align*}
  We conclude with the arbitrariness of $T$.
\end{proof}
Another example of weak* convergence is given by the following proposition.
\begin{prop}
  Let $(\omega_n)$ be a sequence in $\rmLip^\alpha_{\rmloc}(\R^d;
  \wedge^m \R^d)$ such that, for all compact subsets $K \subset \R^d$,
  one has
  \[
  \sup_n \rmLip^\alpha(\omega_{n\mid K}) < \infty \text{ and }
  \omega_n \to \omega \text{ uniformly on } K.
  \]
  Then $\omega_n \to \omega$ weakly-* in $\bCH^{m,\alpha}(\R^d)$.
\end{prop}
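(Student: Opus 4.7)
The plan is to verify the two conditions defining weak-* convergence in $\bCH^{m,\alpha}(\R^d)$, namely: boundedness of $(\omega_n)$ in the Fréchet topology of $\bCH^{m,\alpha}(\R^d)$, and weak convergence of $\omega_n$ to $\omega$ in $\bCH^m(\R^d)$ (using the equivalent formulation (B') noted just above this proposition). Along the way I also need to check that the limit $\omega$ genuinely belongs to $\bCH^{m,\alpha}(\R^d)$.

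First I would verify that $\omega \in \rmLip^\alpha_{\rmloc}(\R^d; \wedge^m \R^d)$. Indeed, for any compact $K$ and any $x, y \in K$, passing to the limit in the inequality $\|\omega_n(x) - \omega_n(y)\| \leq \rmLip^\alpha(\omega_{n\mid K}) |x-y|^\alpha$ (which we may do since $\omega_n \to \omega$ uniformly, hence pointwise, on $K$) yields $\rmLip^\alpha(\omega_{\mid K}) \leq \sup_n \rmLip^\alpha(\omega_{n\mid K}) < \infty$. Invoking the embedding $\rmLip^\alpha_{\rmloc}(\R^d; \wedge^m \R^d) \subset \bCH^{m,\alpha}(\R^d)$ from Subsection~\ref{subsec:chargesRd}(C), we have $\omega \in \bCH^{m,\alpha}(\R^d)$, so the statement makes sense.

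Next I would establish condition (A), the boundedness. Fix a compact set $K \subset \R^d$. By the continuity estimate~\eqref{eq:LambdaC0} applied to each $\omega_n$ restricted to $K$, one has
\[
\| \omega_n \|_{\bCH^{m,\alpha}, K} \leq C \max \left\{ \| \omega_n \|_{\infty, K}, \rmLip^\alpha(\omega_{n\mid K}) \right\}.
\]
Both quantities on the right are bounded uniformly in $n$: the Hölder seminorm by hypothesis, and the sup norm because uniform convergence on $K$ implies $\sup_n \| \omega_n \|_{\infty, K} < \infty$. Hence $\sup_n \| \omega_n \|_{\bCH^{m,\alpha}, K} < \infty$, as required.

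Finally, for (B'), I would let $T \in \bN_m(\R^d)$ be arbitrary, with compact support $K = \spt T$. Since $\Lambda$ identifies $\omega_n$ with a charge via
\[
\omega_n(T) = \int_K \langle \omega_n(x), \vec{T}(x) \rangle \diff \| T \|(x),
\]
and likewise for $\omega$, the uniform convergence $\omega_n \to \omega$ on $K$ combined with $\| T \|(K) = \bM(T) < \infty$ gives
\[
|\omega_n(T) - \omega(T)| \leq \| \omega_n - \omega \|_{\infty, K} \, \bM(T) \to 0.
\]
There is no genuine obstacle here; the result is essentially a packaging of~\eqref{eq:LambdaC0} together with dominated convergence, and the only point requiring care is to confirm that weak pointwise convergence on $\bN_m(\R^d)$ suffices (which is guaranteed by the compactness discussion preceding the proposition).
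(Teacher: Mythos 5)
Your proof is correct and follows essentially the same route as the paper: both use the integral formula $\omega_n(T)-\omega(T)=\int\langle\omega_n-\omega,\vec T\rangle\diff\|T\|$ to get weak convergence from uniform convergence, and both appeal to estimate~\eqref{eq:LambdaC0} to obtain the uniform bound on $\|\omega_n\|_{\bCH^{m,\alpha},K}$. The only difference is that you also verify explicitly that the limit $\omega$ lies in $\rmLip^\alpha_{\rmloc}(\R^d;\wedge^m\R^d)$ and hence in $\bCH^{m,\alpha}(\R^d)$, a point the paper leaves implicit in the statement; this is a harmless and arguably welcome addition.
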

\begin{proof}
  Let $T \in \bN_m(\R^d)$. From
  \[
  \omega_n(T) - \omega(T) = \int_{\spt T} \langle \omega_n(x) - \omega(x),  \vec{T}(x) \rangle \diff \|T\|(x)
  \]
  we infer $|\omega_n(T) - \omega(T)| \leq \|\omega_n -
  \omega\|_{\infty, \spt T} \bM(T) \to 0$. This proves that $\omega_n
  \to \omega$ weakly in $\bCH^m(\R^d)$.

  For any compact subset $K \subset \R^d$, one has
  \[
  \| \omega_n \|_{\bCH^{m, \alpha}, K} \leq C \max \{ \| \omega_n
  \|_{\infty, K}, \rmLip^\alpha(\omega_{n \mid K}) \}
  \]
  by~\eqref{eq:LambdaC0}. This guarantees that the sequence
  $(\omega_n)$ is bounded in $\bCH^{m, \alpha}(\R^d)$.
\end{proof}

\subsection{A Littlewood-Paley type lemma}
The following interpolation lemma, though simple, will prove crucial
in the remainder of this paper. Indeed, it will be an important
ingredient in the proof of our three main results and constructions:
the compactness theorem, pushforwards by Hölder maps and the wedge
product of fractional charges.
\begin{lemma}
  \label{lemma:LPcharge}
  Let $K$ be a compact subset of $\R^d$, let $0 < \alpha < \beta \leq
  1$ and $\kappa \geq 0$. Suppose $(\omega_n)$ is a sequence in
  $\bCH^{m,\beta}(K)$ such that, for all $n \geq 0$, one has
  \begin{equation}
    \label{eq:compo}
  \| \omega_n \|_{\bCH^{m, \beta}(K)} \leq 2^{-n(\alpha - \beta)} \kappa
  \qquad
  \text{and}
  \qquad
  \| \omega_n \|_{\bCH^{m}(K)} \leq 2^{-n\alpha} \kappa
  \end{equation}
  Then $\sum_{n=0}^\infty \omega_n \colon T \mapsto \sum_{n=0}^\infty
  \omega_n(T)$ is well-defined, and is a $\alpha$-fractional charge
  such that
  \[
  \left\| \sum_{n=0}^\infty \omega_n \right\|_{\bCH^{m,\alpha}(K)}
  \leq C \kappa
  \]
  where $C = C(\beta, \alpha)$ is a constant.
\end{lemma}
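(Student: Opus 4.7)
The plan is to exploit the two hypotheses by choosing, for each test current $T \in \bN_m(K)$, which bound on $|\omega_n(T)|$ is cheaper at index $n$, and then to split the sum $\sum_n |\omega_n(T)|$ accordingly at a dyadic threshold. Precisely, from $\|\omega_n\|_{\bCH^{m,\beta}(K)} \leq 2^{n(\beta-\alpha)}\kappa$ and $\|\omega_n\|_{\bCH^{m}(K)} \leq 2^{-n\alpha}\kappa$ we get the two inequalities
\[
|\omega_n(T)| \leq 2^{n(\beta-\alpha)}\kappa\,\bN(T)^{1-\beta}\bF(T)^{\beta}
\quad\text{and}\quad
|\omega_n(T)| \leq 2^{-n\alpha}\kappa\,\bN(T).
\]
The first bound is best when $n$ is small and the second when $n$ is large; both give the same value, up to constants, precisely at $N \defeq \lfloor \log_2(\bN(T)/\bF(T)) \rfloor$. (The trivial case $\bF(T) = 0$ forces $T = 0$ on $\bN_m(K)$, so we may assume $\bF(T) > 0$.)

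First I would use the $\beta$-fractional bound on the range $0 \leq n \leq N$. Since $\beta - \alpha > 0$, this is a geometric sum of ratio $2^{\beta-\alpha}$ and is therefore controlled by its last term:
\[
\sum_{n=0}^{N} |\omega_n(T)| \leq \frac{2^{\beta-\alpha}}{2^{\beta-\alpha}-1}\,\kappa\, 2^{N(\beta-\alpha)} \bN(T)^{1-\beta}\bF(T)^{\beta}.
\]
Injecting $2^{N} \leq \bN(T)/\bF(T)$ turns the right-hand side into $C(\alpha,\beta)\,\kappa\,\bN(T)^{1-\alpha}\bF(T)^{\alpha}$. For $n \geq N+1$ I would instead use the charge bound and sum the convergent geometric series
\[
\sum_{n=N+1}^{\infty} |\omega_n(T)| \leq \frac{1}{1-2^{-\alpha}}\,\kappa\, 2^{-(N+1)\alpha}\, \bN(T),
\]
and since $2^{-(N+1)} \leq \bF(T)/\bN(T)$ by definition of $N$, this is also bounded by $C(\alpha)\,\kappa\,\bN(T)^{1-\alpha}\bF(T)^{\alpha}$. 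Adding the two contributions shows that the series $\sum_n \omega_n(T)$ converges absolutely with
\[
\Bigl|\sum_{n=0}^\infty \omega_n(T)\Bigr| \leq C(\alpha,\beta)\,\kappa\,\bN(T)^{1-\alpha}\bF(T)^{\alpha}.
\]

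The map $T \mapsto \sum_n \omega_n(T)$ is linear as a pointwise limit of linear maps, so the above estimate is exactly the $\alpha$-fractional continuity property, giving the announced control on $\|\sum_n \omega_n\|_{\bCH^{m,\alpha}(K)}$. There is no serious obstacle in this argument; the only genuine content is to recognise that the two hypotheses combine by dyadic interpolation, with the cutoff $N$ being essentially forced by setting the two bounds equal. One should merely pay attention to the edge case $\bF(T) = 0$ and to collecting the constants $1/(2^{\beta-\alpha}-1)$ and $1/(1-2^{-\alpha})$, which explains the dependence $C = C(\alpha,\beta)$.
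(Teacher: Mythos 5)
Your proof is correct and follows essentially the same route as the paper: bound $|\omega_n(T)|$ two ways from the hypotheses, split the sum at the dyadic cutoff $N$ where $2^{-(N+1)} \leq \bF(T)/\bN(T) \leq 2^{-N}$, sum the two geometric series, and use the definition of $N$ to reassemble $\bN(T)^{1-\alpha}\bF(T)^\alpha$. The remark on $\bF(T)=0$ and the explicit tracking of the geometric constants are minor additions consistent with the paper's argument, which even contains a couple of typos ($\bF(T)^{1-\beta}$ for $\bF(T)^\beta$, and $k$ versus $n$ in the summation indices) that your write-up avoids.
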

This lemma calls for several remarks:
\begin{itemize}
\item First, the convergence of the series $\sum_{n=0}^\infty
  \omega_n$ is only weak* in $\bCH^{m,\alpha}(K)$. Indeed, by applying
  the same lemma to the truncated sequence $(\omega_0, \dots,
  \omega_N, 0, \dots)$ for all $N \geq 0$, one obtains
  \[
  \sup_{N} \left\| \sum_{n=0}^N \omega_n \right\|_{\bCH^{m,\alpha}(K)}
  \leq C\kappa
  \]
  therefore ensuring that the sequence of partial sums is bounded in
  $\bCH^{m, \alpha}(K)$. Moreover, the sequence of partials sums
  converges weakly in $\bCH^m(K)$, as already asserted in
  Lemma~\ref{lemma:LPcharge}.
\item Let us consider the case $m = 0$ and $\beta = 1$. According to
  Propositions~\ref{prop:CH0} and~\ref{prop:CH0alpha}, charges,
  $\alpha$-fractional charges and $1$-fractional charges can be
  identified with continuous, $\alpha$-Hölder continuous, and
  Lipschitz functions, respectively. In this setting,
  Lemma~\ref{lemma:LPcharge} establishes the $\alpha$-Hölder
  continuity of a uniformly convergent series $f = \sum_{n=0}^\infty
  f_n$. The estimates in~\eqref{eq:compo} translate to $\rmLip f_n
  \leq C 2^{-n(\alpha - 1)} \kappa$ and $\|f_n\|_{\infty, K} \leq C
  2^{-n\alpha} \kappa$, which aligns with the expected behavior of
  the Littlewood-Paley components of $f$. Moreover, the weak*
  convergence of the sum $\sum_{n=0}^\infty \omega_n$ echoes with the
  weak orthogonality properties observed in the Littlewood-Paley
  decomposition in harmonic analysis. In the context of metric spaces,
  and $m = 0$, the existence of such a decomposition is proved
  in~\cite[Appendix~B,~2.6]{Grom}.
\end{itemize}

\begin{proof}[Proof of Lemma~\ref{lemma:LPcharge}]
  Let $T \in \bN_m(K)$. Using~\eqref{eq:compo}, we can estimate
  $|\omega_n(T)|$ in two different ways
  \[
  |\omega_n(T)| \leq 2^{-n(\alpha - \beta)} \kappa \bN(T)^{1 - \beta}
  \bF(T)^{\beta} \qquad \text{and} \qquad |\omega_n(T)| \leq
  2^{-n\alpha} \kappa \bN(T)
  \]
  Let $N$ be a nonnegative integer, to be determined later. We have
  \begin{align*}
  \left| \sum_{k=0}^N \omega_n(T) \right| & \leq \kappa \bN(T)^{1 -
    \beta} \bF(T)^\beta \sum_{k=0}^N 2^{-n(\alpha-\beta)} \\ & \leq C
  \kappa \bN(T)^{1 - \beta} \bF(T)^\beta 2^{-N(\alpha - \beta)}
  \end{align*}
  and
  \[
  \left| \sum_{k=N+1}^\infty \omega_n(T) \right| \leq \kappa \bN(T)
  \sum_{k=N+1}^\infty 2^{-n\alpha} \leq C \kappa \bN(T) 2^{-N\alpha}
  \]
  Since $\bF(T) \leq \bN(T)$, we choose $N$ to be a nonnegative
  integer such that
  \[
  2^{-(N+1)} \leq \frac{\bF(T)}{\bN(T)} \leq 2^{-N}
  \]
  Using the preceding inequalities, we obtain
  \begin{align*}
  \left| \sum_{k=0}^\infty \omega_n(T) \right| & \leq C \kappa \left(
  \bN(T)^{1 - \beta} \bF(T)^\beta \left(
  \frac{\bF(T)}{\bN(T)}\right)^{\alpha - \beta} + \bN(T) \left(
  \frac{\bF(T)}{\bN(T)}\right)^\alpha \right) \\
  & \leq C \kappa \bN(T)^{1 - \alpha} \bF(T)^\alpha \qedhere
  \end{align*}
\end{proof}

When adapted to charges over $\R^d$, the preceding proposition takes
the following form. We state it only for the case $\beta = 1$, which
will be used.

\begin{coro}
  \label{cor:LP}
  Suppose $0 < \alpha < 1$. Let $(\omega_n)$ be a sequence in
  $\bCH^{m,1}(\R^d)$ such that, for each compact $K \subset \R^d$,
  there is $\kappa(K) \geq 0$ such that
  \[
  \|\omega_n\|_{\bCH^{m,1}, K} \leq \kappa(K) 2^{n(1 - \alpha)} \text{ and }
  \|\omega_n\|_{\bCH^m, K} \leq \frac{\kappa(K)}{2^{n\alpha}}
  \]
  for all $n$. Then $\sum_{n=0}^\infty \omega_n$ converges weakly-* to
  a charge in $\bCH^{m,\alpha}(\R^d)$. In addition, for all compact $K
  \subset \R^d$,
  \begin{equation}
    \label{eq:tobeused}
    \left\| \sum_{n=0}^\infty \omega_n
    \right\|_{\bCH^{m, \alpha}, K} \leq C \kappa(K)
  \end{equation}
\end{coro}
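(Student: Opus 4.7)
The plan is to reduce the corollary to Lemma~\ref{lemma:LPcharge} by restriction to compact sets, then verify the two conditions (A) and (B) defining weak-$\ast$ convergence.

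First I fix a compact set $K \subset \R^d$ and consider the restrictions $\omega_n \restriction \bN_m(K)$. By hypothesis these lie in $\bCH^{m,1}(K)$ with $\|\omega_n\restriction \bN_m(K)\|_{\bCH^{m,1}(K)} \leq \kappa(K) 2^{n(1-\alpha)}$ and $\|\omega_n \restriction \bN_m(K)\|_{\bCH^m(K)} \leq \kappa(K) 2^{-n\alpha}$. Applying Lemma~\ref{lemma:LPcharge} with $\beta=1$, the series $\sum_{n=0}^\infty \omega_n$ restricted to $\bN_m(K)$ defines an $\alpha$-fractional charge on $K$ with norm at most $C\kappa(K)$. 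For any single $T \in \bN_m(\R^d)$ one takes $K = \spt T$: since $\sum_n |\omega_n(T)| \leq \kappa(\spt T)\bN(T) \sum_n 2^{-n\alpha} < \infty$, the series $\omega(T) \defeq \sum_n \omega_n(T)$ is absolutely convergent, so $\omega$ is a well-defined linear functional on $\bN_m(\R^d)$ whose restriction to each $\bN_m(K)$ agrees with the one produced by Lemma~\ref{lemma:LPcharge}. Thus $\omega \in \bCH^{m,\alpha}(\R^d)$ and the estimate~\eqref{eq:tobeused} holds.

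Next I verify weak-$\ast$ convergence of the partial sums $S_N = \sum_{n=0}^N \omega_n$ to $\omega$. For boundedness in $\bCH^{m,\alpha}(\R^d)$, note that the truncated sequence $(\omega_0,\dots,\omega_N,0,0,\dots)$ satisfies the same hypotheses of Lemma~\ref{lemma:LPcharge} with the same constant $\kappa(K)$, so $\|S_N\|_{\bCH^{m,\alpha}, K} \leq C \kappa(K)$ uniformly in $N$. This gives condition (A).

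For condition (B), I need $S_N \to \omega$ in $\bCH^m(\R^d)$, i.e.\ $\|\omega - S_N\|_{\bCH^m, K} \to 0$ for every compact $K$. This is immediate from the hypothesis $\|\omega_n\|_{\bCH^m, K} \leq \kappa(K) 2^{-n\alpha}$, since $\bCH^m(K)$ is a Banach space and
\[
\|\omega - S_N\|_{\bCH^m, K} \leq \sum_{n=N+1}^\infty \|\omega_n\|_{\bCH^m, K} \leq \kappa(K) \sum_{n=N+1}^\infty 2^{-n\alpha} \xrightarrow[N\to\infty]{} 0,
\]
using $\alpha > 0$. Combined with (A), this establishes weak-$\ast$ convergence. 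No step is really an obstacle here; the entire content is Lemma~\ref{lemma:LPcharge}, and the only thing to be careful about is that the restrictions to different compact sets glue consistently into a single element of $\bCH^{m,\alpha}(\R^d)$, which is transparent because $\omega(T)$ depends only on $\spt T$.
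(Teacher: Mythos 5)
Your proof is correct, and it follows the route the paper intends: the paper states this corollary without proof as a "straightforward adaptation" of Lemma~\ref{lemma:LPcharge} to $\R^d$, and you carry out precisely that adaptation by restricting to each compact $K$, invoking the lemma with $\beta = 1$, gluing via absolute convergence of $\sum_n \omega_n(T)$ for each $T$, and then verifying conditions (A) and (B) of weak-$\ast$ convergence for the partial sums.
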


Of course, it is enough to check the hypothesis of
Corollary~\ref{cor:LP} when $K$ ranges over (non degenerate) closed
balls. A Littlewood-Paley type decomposition of a fractional charge
$\omega \in \bCH^{m, \alpha}(\R^d)$ will be obtained by convolution
\[
\omega = \omega * \Phi_{1} + \sum_{n=0}^\infty \left( \omega *
\Phi_{2^{-(n+1)}} - \omega * \Phi_{2^{-n}} \right).
\]
This decomposition will play a pivotal role in the forthcoming proof
of Theorem~\ref{thm:main}.

\section{Compactness theorem}
\label{sec:compact}

The purpose of this section is to prove the following compactness
theorem
\begin{thm}
  \label{thm:compactness}
  Let $K \subset \R^d$ be a compact subset, $0 \leq \alpha < 1$ and $c
  \geq 0$. Then \[\{T \in \bF^\alpha_m(K) : \bF^\alpha(T) \leq c\}\]
  is $\bF^\beta$-compact, for all $\beta \in \mathopen{(} \alpha, 1
  \mathclose{]}$.
\end{thm}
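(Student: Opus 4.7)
The plan is to exploit the elementary interpolation inequality
\[
\bF^\beta(T) \leq \bN(T)^{1-\beta}\bF(T)^\beta
\]
valid for every normal current $T$ (obtained from the trivial one-term fractional decomposition of $T$), combined with the classical Federer-Fleming compactness theorem applied to the ``principal parts'' of fractional decompositions of the $T_n$.

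Given a sequence $(T_n)$ with $\bF^\alpha(T_n) \leq c$, I would first fix for each $n$ an $\alpha$-fractional decomposition $T_n = \sum_k T_{n,k}$ satisfying $\sum_k \bN(T_{n,k})^{1-\alpha}\bF(T_{n,k})^\alpha \leq c + 2^{-n}$. By pushing every $T_{n,k}$ forward under the $1$-Lipschitz orthogonal projection onto the closed convex hull $\hat{K}$ of $K$ (which neither increases $\bN$ nor $\bF$ and leaves $T_n$ itself invariant), I may assume all components are supported in the uniform compact $\hat{K}$. For each integer $j \geq 0$, split
\[
T_n = A_n^{(j)} + B_n^{(j)}, \qquad A_n^{(j)} = \sum_{k \,:\, \bF(T_{n,k}) \geq 2^{-j}\bN(T_{n,k})} T_{n,k}.
\]
Writing $\bN(T_{n,k})^{1-\alpha}\bF(T_{n,k})^\alpha = \bN(T_{n,k})(\bF(T_{n,k})/\bN(T_{n,k}))^\alpha$, the ``bulk'' estimate $\bN(A_n^{(j)}) \leq (c+1)\,2^{j\alpha}$ follows since each retained component contributes at least $2^{-j\alpha}\bN(T_{n,k})$ to the fractional sum, while the ``tail'' estimate $\bF^\beta(B_n^{(j)}) \leq (c+1)\,2^{-j(\beta-\alpha)}$ follows from $\bN(T_{n,k})^{1-\beta}\bF(T_{n,k})^\beta \leq 2^{-j(\beta-\alpha)}\bN(T_{n,k})^{1-\alpha}\bF(T_{n,k})^\alpha$ on the discarded components.

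For each fixed $j$, the sequence $(A_n^{(j)})_n$ is normal with uniformly bounded normal mass and supports in $\hat{K}$, so Federer-Fleming extracts a subsequence $\bF$-converging to some $A^{(j)}$, and a standard diagonal extraction provides a single subsequence along which $A_n^{(j)} \to A^{(j)}$ in $\bF$ for every $j$. The triangle inequality
\[
\bF^\beta(T_n - T_{n'}) \leq \bF^\beta(A_n^{(j)} - A_{n'}^{(j)}) + \bF^\beta(B_n^{(j)}) + \bF^\beta(B_{n'}^{(j)}),
\]
combined with the interpolation inequality applied to $A_n^{(j)} - A_{n'}^{(j)}$ (whose normal mass is controlled by $2(c+1)\,2^{j\alpha}$ and whose flat norm tends to $0$), shows that choosing $j$ large first (to make both tails smaller than a prescribed $\varepsilon$ uniformly in $n,n'$) and then $n,n'$ large produces an $\bF^\beta$-Cauchy subsequence. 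Completeness of $\bF^\beta_m(K)$ (Proposition~\ref{prop:basic}(D)) then delivers a limit $T \in \bF^\beta_m(K)$.

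The main remaining obstacle is to verify that the limit $T$ lies in the original ball, i.e., the lower semicontinuity of $\bF^\alpha$ under $\bF^\beta$-convergence; since I have passed to $\hat{K}$ at the outset, this is cleanest via the duality $\bF^\alpha_m(\hat{K}) \cong \bch^{m,\alpha}(\hat{K})^*$ established elsewhere in the paper, after which weak-$*$ lower semicontinuity of dual norms closes the argument.
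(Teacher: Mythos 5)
Your argument is correct, and it proves sequential $\bF^\beta$-compactness by a genuinely different mechanism from the paper's. The paper's proof is organized as closedness (Corollary~\ref{coro:closureThm}, via the predual $\bch^{m,\alpha}(K)$ and Banach--Alao\u{g}lu) plus total boundedness, the latter obtained from a dedicated deformation theorem for fractional currents (Theorem~\ref{thm:deformation}). You instead prove sequential compactness directly: you fix near-optimal decompositions $T_n=\sum_k T_{n,k}$, split each at the threshold $\bF(T_{n,k})\geq 2^{-j}\bN(T_{n,k})$ into a normal bulk $A_n^{(j)}$ with $\bN(A_n^{(j)})\lesssim 2^{j\alpha}$ and a tail $B_n^{(j)}$ with $\bF^\beta(B_n^{(j)})\lesssim 2^{-j(\beta-\alpha)}$, invoke the classical Federer--Fleming compactness on the bulks, diagonalize, and then use the elementary interpolation bound $\bF^\beta(S)\leq\bN(S)^{1-\beta}\bF(S)^\beta$ to close the Cauchy estimate. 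Your final step (stability of the ball under the $\bF^\beta$-limit) is the same duality argument as Corollary~\ref{coro:closureThm}, so both proofs ultimately rest on Proposition~\ref{prop:dualch}; the difference lies entirely in the quantitative part. What your route buys is an entirely measure-theoretic/Banach-space argument, avoiding the rather heavy deformation theorem machinery; what the paper's route buys in return is Theorem~\ref{thm:deformation} itself (a quantitative polyhedral approximation of fractional currents), which is of independent interest and is not a corollary of your approach. One minor point worth making explicit: the retained sum $A_n^{(j)}$ is indeed a well-defined normal current because, over retained $k$, $\bN(T_{n,k})\leq 2^{j\alpha}\bN(T_{n,k})^{1-\alpha}\bF(T_{n,k})^\alpha$, so the series $\sum_{\mathrm{retained}} T_{n,k}$ converges $\bN$-absolutely in the Banach space $\bN_m(\hat K)$; this is exactly the content of your bulk estimate, but it is worth flagging since the original decomposition need not be $\bN$-summable.
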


%The proof of Theorem~\ref{thm:compactness} can be reduced to the case
%where $K$ is compact convex. We will make this assumption throughout
%this section.

\subsection{Little fractional charges}
Our first step towards the compactness theorem is to prove that
$\bF^\alpha_m(K)$ is itself a dual space, specifically the space of
so-called little fractional charges. This will help us show that the
ball $\{T \in \bF^\alpha_m(K) : \bF^\alpha(T) \leq c\}$ is closed in
flat norm. 

Let $\alpha \in (0, 1)$. We define the map
\[
\iota_K \colon C^\infty(\R^d; \wedge^m \R^d) \to \bCH^{m,\alpha}(K)
\colon \omega \mapsto (T \mapsto T(\omega))
\]
Note that $\iota_K(\omega) = \Lambda(\omega_{\mid K})$ for all smooth
forms $\omega$, where $\Lambda$ is the map
from~\eqref{eq:Lambdaalpha}. For all $S \in \bN_{m+1}(K)$, one has
\begin{align*}
  |\iota_K(\omega)(T)| & \leq |(T - \partial S)(\omega)| + |S(\diff
  \omega)| \\ & \leq \left( \bM(T - \partial S) + \bM(S) \right) \max
  \{ \|\omega\|_{\infty, K}, \| \diff \omega \|_{\infty, K} \}
\end{align*}
In case $K$ is convex, we take the infimum over all $S \in
\bN_{m+1}(K)$ and deduce that
\[
\|\iota_K(\omega)\|_{\bCH^{m,1}(K)} \leq \max \{ \|\omega\|_{\infty,
  K}, \| \diff \omega \|_{\infty, K} \}
\]
and
\begin{equation}
  \label{eq:iotaKC0}
  \| \iota_K(\omega) \|_{\bCH^{m, \alpha}(K)} \leq
    \|\iota_K(\omega)\|_{\bCH^{m,1}(K)} \leq \max \{
    \|\omega\|_{\infty, K}, \| \diff \omega \|_{\infty, K} \}.
\end{equation}

The space $\bch^{m,\alpha}(K)$ of \emph{little $\alpha$-fractional
  charges} is defined to be the closure in $\bCH^{m, \alpha}(K)$ of
$\operatorname{im} \iota_K$. It is given the norm inherited from
$\bCH^{m, \alpha}(K)$.

%The consideration of the case $m=0$ provides a natural motivation for
%introducing little fractional charges. Specifically, it is known that
%$\bCH^{0,\alpha}(K)$ is isomorphic to $\rmLip^\alpha(K)$, the space of
%$\alpha$-Hölder continuous functions on $K$, see
%\cite[Proposition~4.4]{Boua1}. This space is the double dual of
%$\operatorname{lip}^\alpha(K)$, the closure of $\rmLip(K)$ in
%$\rmLip^\alpha(K)$. In the literature, $\rmlip^\alpha(K)$ is often
%referred to as the space of little Hölder functions. Drawing on this
%analogy, we have adopted the term ``little fractional charge''.

\begin{prop}
  \label{prop:dualch}
  Let $K$ be a compact convex subset of $\R^d$ and $\alpha \in (0,
  1)$. Then $\bF^\alpha_m(K)$ is isomorphic to the dual of $\bch^{m,
    \alpha}(K)$. The corresponding duality bracket is the restriction
  of~\eqref{eq:dualB} to $\bch^{m,\alpha}(K) \times \bF^\alpha_m(K)$.
\end{prop}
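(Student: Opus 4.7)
The plan is to construct an isometric isomorphism
\[
\Psi \colon \bF^\alpha_m(K) \to \bch^{m, \alpha}(K)^{*}, \qquad \Psi(T)(\omega) = \langle \omega, T\rangle,
\]
using the pairing of Theorem~\ref{thm:duality}. The bound $\|\Psi(T)\| \leq \bF^\alpha(T)$ is immediate from Theorem~\ref{thm:duality} combined with the isometric inclusion $\bch^{m, \alpha}(K) \hookrightarrow \bCH^{m, \alpha}(K) \cong \bF^\alpha_m(K)^{*}$. For injectivity, the key identity is $\langle \iota_K(\omega), T\rangle = T(\omega)$ for every smooth $m$-form $\omega$ on $\R^d$: given an $\alpha$-fractional decomposition $T = \sum T_k$, weak convergence of the partial sums yields $\langle \iota_K(\omega), T\rangle = \sum T_k(\omega) = T(\omega)$. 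Consequently $\Psi(T) = 0$ forces $T$ to annihilate all smooth forms, so $T = 0$.

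For the reverse isometric inequality, the approach is to approximate near-extremal charges by smooth ones. Fix $\varepsilon > 0$ and invoke Theorem~\ref{thm:duality} to find $\omega \in \bCH^{m, \alpha}(K)$ with $\|\omega\|_{\bCH^{m, \alpha}(K)} \leq 1$ and $\langle \omega, T\rangle \geq (1-\varepsilon)\bF^\alpha(T)$. By convexity of $K$, extend $\omega$ to $\hat\omega = p^{\#}\omega \in \bCH^{m, \alpha}(\R^{d})$ via the $1$-Lipschitz orthogonal projection $p \colon \R^d \to K$; this controls the fractional norm on currents supported in tubular neighborhoods of $K$. Proposition~\ref{prop:estimatesSmoothing}(A) then yields $\|\iota_K(\hat\omega * \Phi_{\varepsilon_n})\|_{\bCH^{m, \alpha}(K)} \leq 1$ for each $n$, while termwise dominated convergence along an $\alpha$-fractional decomposition of $T$ (with summable majorant $\bN(T_k)^{1-\alpha}\bF(T_k)^{\alpha}$) gives $\langle \iota_K(\hat\omega * \Phi_{\varepsilon_n}), T\rangle \to \langle \omega, T\rangle$. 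Since $\iota_K(\hat\omega * \Phi_{\varepsilon_n}) \in \iota_K(C^\infty) \subset \bch^{m, \alpha}(K)$, the isometry follows upon letting $\varepsilon \to 0$.

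Surjectivity is the main obstacle. Given $\varphi \in \bch^{m, \alpha}(K)^{*}$, the natural candidate is $T(\omega) := \varphi(\iota_K(\omega))$ for smooth $\omega$, which by~\eqref{eq:iotaKC0} defines a flat $m$-current supported in $K$ with $\bF(T) \leq \|\varphi\|$; the real difficulty is upgrading $T$ to an $\alpha$-fractional current with $\bF^\alpha(T) \leq \|\varphi\|$. Since $\Psi$ is isometric, $\Psi(\bF^\alpha_m(K))$ is a norm-closed subspace of $\bch^{m, \alpha}(K)^{*}$ and surjectivity reduces to density, equivalently to showing that no nonzero $\chi \in \bch^{m, \alpha}(K)^{**}$ vanishes on $\Psi(\bF^\alpha_m(K))$. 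Goldstine's theorem would then supply a bounded net $\omega_\lambda \in \bch^{m, \alpha}(K)$ weak*-approximating $\chi$, with $\omega_\lambda(T') \to 0$ for every $T' \in \bN_m(K)$; combined with the fact that charges are determined by their action on $\bN_m(K)$, this is expected to force $\chi = 0$. The genuine technical challenge lies in making this weak*-to-norm transition rigorous in the absence of reflexivity of $\bF^\alpha_m(K)$, exploiting the weak*-density of smooth charges in $\bCH^{m, \alpha}(K)$ (with respect to $\bF^\alpha_m(K)$) that was established in the course of the isometry argument.
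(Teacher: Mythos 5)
Your treatment of injectivity and of the bound $\|\Psi(T)\| \leq \bF^\alpha(T)$ is correct and matches the paper, and the reverse isometric inequality you add (via the near-extremal charge $\omega$, the Lipschitz pullback $p^\#\omega$, Proposition~\ref{prop:estimatesSmoothing}(A), and dominated convergence along a fractional decomposition of $T$) is sound, though not actually needed since the proposition claims only an isomorphism. The gap is in surjectivity, which is the entire content of the proposition. You correctly identify the candidate $T(\omega) = \varphi(\iota_K(\omega))$ and correctly observe that the difficulty is showing $T$ is $\alpha$-fractional, but the abstract route through $\bch^{m,\alpha}(K)^{**}$ does not close. Given $\chi$ annihilating $\Psi(\bF^\alpha_m(K))$, Goldstine provides a bounded net $\omega_\lambda$ in $\bch^{m,\alpha}(K)$ with $\omega_\lambda \to \chi$ weak* in the bidual, from which you extract $\omega_\lambda(T') \to 0$ for every $T' \in \bN_m(K)$; but this is convergence against the functionals in $\Psi(\bN_m(K))$ only, whereas to conclude $\chi = 0$ you need $\psi(\omega_\lambda) \to 0$ for \emph{all} $\psi \in \bch^{m,\alpha}(K)^*$ — and the passage from the former to the latter is precisely the density statement you are trying to prove. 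There is no compactness available to rescue the argument, since bounded sets of $\bch^{m,\alpha}(K)$ are not weakly relatively compact in the absence of reflexivity. You acknowledge this yourself, so the proposal stops short of a proof.

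The paper's argument replaces this abstraction by a quantitative Littlewood--Paley scheme. One sets $T_n = T * \Phi_{2^{-n}} \in \bN_m(L)$ with $L = B(K,1)$ and, for each $n$, produces via Theorem~\ref{thm:duality} plus mollification a smooth form $\theta_n$ with $\|\iota_L(\theta_n)\|_{\bCH^{m,\alpha}(L)} \leq 1$ and $(T_{n+1}-T_n)(\theta_n) \geq \tfrac12 \bF^\alpha(T_{n+1}-T_n)$. Setting $\eta_n = p^\#\iota_L(\theta_n) * \Phi_{2^{-(n+1)}} - p^\#\iota_L(\theta_n) * \Phi_{2^{-n}}$ and using the identity $(T*\Phi_\varepsilon)(\omega) = T(p^\#\iota_L(\omega)*\Phi_\varepsilon)$, one gets $\varphi(\iota_K(\eta_n)) \geq \tfrac12 \bF^\alpha(T_{n+1}-T_n)$. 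Proposition~\ref{prop:22} then yields the dyadic estimates $\|\iota_K(\eta_n)\|_{\bCH^{m,1}(K)} \lesssim 2^{n(1-\alpha)}$ and $\|\iota_K(\eta_n)\|_{\bCH^m(K)} \lesssim 2^{-n\alpha}$, and Lemma~\ref{lemma:LPcharge} bounds every partial sum $\sum_{n\leq N} \iota_K(\eta_n)$ uniformly in $\bCH^{m,\alpha}(K)$. Applying $\varphi$ to these partial sums gives $\sum_n \bF^\alpha(T_{n+1}-T_n) \lesssim \|\varphi\|$, so $(T_n)$ is $\bF^\alpha$-Cauchy and its (weak, hence $\bF^\alpha$-) limit $T$ lies in $\bF^\alpha_m(K)$; density of $\iota_K(C^\infty)$ then identifies $\varphi$ with $\Upsilon(T)$. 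This construction — mollify at dyadic scales, test against near-extremizers, and invoke the Littlewood--Paley lemma — is the missing ingredient in your proposal.
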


\begin{proof}
  Let $\Upsilon \colon \bF^\alpha_m(K) \to \bch^{m,\alpha}(K)^*$ be
  the map $T \mapsto \langle \cdot , T \rangle$. It is clear that
  $\Upsilon$ is linear and continuous, as the bracket satisfies the
  continuity condition
  \[
  | \langle \omega, T \rangle | \leq \| \omega \|_{\bCH^{m,
      \alpha}(K)} \bF^\alpha(T) = \| \omega \|_{\bch^{m,\alpha}(K)}
  \bF^\alpha(T)
  \]
  for all $\omega \in \bch^{m,\alpha}(K)$ and $T \in \bF^\alpha_m(K)$.

  Moreover, $\Upsilon$ is injective, for if $\Upsilon(T) = 0$, then
  for all compactly supported smooth $m$-form $\omega$, one has
  \[
  T(\omega) = \langle \iota_K(\omega), T \rangle = \Upsilon(T)
  (\iota_K(\omega)) = 0
  \]
  Next we turn to surjectivity of $\Upsilon$. Let $\varphi \in
  \bch^{m,\alpha}(K)^*$. We define $T \colon C^\infty(\R^d; \wedge^m
  \R^d) \to \R$ by
  \[
  T(\omega) = \varphi(\iota_K(\omega))
  \]
  It is clear that $T$ is linear. By the continuity of $\varphi$
  and~\eqref{eq:iotaKC0}, one infers
  \[
  |T(\omega)| \leq \| \varphi \| \max \{ \| \omega\|_{\infty, K},
  \|\diff \omega\|_{\infty, K} \}
  \]
  This implies in particular that $T$ is an $m$-current, compactly
  supported in $K$. But at this point, it is not even clear that $T$ is a flat chain (for this claim, we need to prove that $T$ is a limit of normal currents under the flat norm).

  For each integer $n \geq 1$, we define $T_n = T * \Phi_{2^{-n}}$
  which is a smooth $m$-current. Let us denote $L = B(K, 1) = \{x \in \R^d : \operatorname{dist}(x, K) \leq 1\}$. We
  remark that all the currents $T_n$ belong to $\bN_m(L)$. Observe
  that $L$ is still a compact convex subset of $\R^d$.  Let us denote
  by $p \colon \R^d \to L$ the orthogonal projection onto $L$.
  \begin{Claim*}
    For all $n$, there is a smooth $m$-form $\theta_n$ such that
    \begin{equation}
      \label{eq:theta_n}
    (T_{n+1} - T_n)(\theta_n) \geq \frac{\bF^\alpha(T_{n+1} - T_n)}{2}
    \quad \text{and} \quad \| \iota_L(\theta_n) \|_{\bCH^{m,
        \alpha}(L)} \leq 1
    \end{equation}
  \end{Claim*}
  To make notations shorter, let us abbreviate $R = T_{n+1} - T_n$
  during the proof of this claim.  By Theorem~\ref{thm:duality}, there
  exists a charge $\theta \in \bCH^{m,\alpha}(L)$ such that
  \[
  \theta(R) \geq \frac{2}{3}\bF^\alpha(R)
  \text{ and } \| \theta \|_{\bCH^{m,\alpha}(L)} = 1
  \]
  Let us define the smooth form $\theta_n = p^\# \theta *
  \Phi_\varepsilon$, where $\varepsilon$ is to be determined
  shortly. We have
  \begin{align*}
    R(\theta_n) & = \theta\left( p_\#(R *
    \Phi_\varepsilon) \right) \\ & = \theta \left( R *
    \Phi_\varepsilon \right)
  \end{align*}
  if $\varepsilon + 2^{-n} \leq 1$ (this ensures that $\spt (R * \Phi_\varepsilon) \subset L$). By Proposition~\ref{prop:22}(E)
  and (G), one has
  \begin{align*}
    \left| \theta(R) - \theta \left(R * \Phi_\varepsilon \right) \right|&
    \leq \bN(R - R * \Phi_\varepsilon)^{1 - \alpha} \bF(R - R*
    \Phi_\varepsilon)^\alpha \\ & \leq C \varepsilon^\alpha \bN(R)
  \end{align*}
  so that
  \begin{align*}
  (T_{n+1} - T_n)(\theta_n) & = R(\theta_n) \\ & = \theta(R * \Phi_\varepsilon) \\ & \geq \theta(R) - | \theta(R) - \theta(R * \Phi_\varepsilon) | \\ & \geq \frac{2}{3} \bF^\alpha(T_{n+1} - T_n) - C \varepsilon^\alpha \bN(R) 
  \end{align*}  
  and the first inequality in~\eqref{eq:theta_n} is achieved if
  $\varepsilon$ (depending on $R$) is small enough. As for the second one, one computes,
  for $S \in \bN_m(L)$,
  \begin{align*}
    |\iota_L(\theta_n)(S)| & = |S(\theta_n)| \\
    & = |\theta(p_\#(S * \Phi_\varepsilon))| \\
    & \leq \bN(p_\#(S * \Phi_\varepsilon))^{1-\alpha} \bF(p_\#(S * \Phi_\varepsilon))^\alpha \\
    & \leq \bN(S)^{1 - \alpha} \bF(S)^\alpha
  \end{align*}
  which guarantees that $\|\iota_L(\theta_n)\|_{\bCH^{m,\alpha}(L)}
  \leq 1$.
  \begin{Claim*}
    For every smooth $m$-form $\omega$ and $0 < \varepsilon < 1$, one
    has
    \[
    (T * \Phi_\varepsilon)(\omega) = T(p^\# \iota_L(\omega) *
    \Phi_\varepsilon)
    \]
  \end{Claim*}
  Indeed, let $S$ be a normal $m$-current supported in the set $U
  = \{ x \in \R^d : \operatorname{dist}(K, x) < 1 -
  \varepsilon\}$. The charge $p^\# \iota_L(\omega) * \Phi_\varepsilon$
  appearing in the right-hand side satisfies
  \begin{align*}
  (p^\# \iota_L(\omega) * \Phi_\varepsilon) (S) & = \iota_L(\omega)(
    p_\#(S * \Phi_\varepsilon) ) \\ & = \iota_L(\omega)( S *
    \Phi_\varepsilon) \\ & = S * \Phi_\varepsilon(\omega) \\ & =
    S(\omega * \Phi_\varepsilon)
  \end{align*}
  It follows that the smooth forms $\omega * \Phi_\varepsilon$ and
  $p^\# \iota_L(\omega) * \Phi_\varepsilon$ agree on $U$ (here we use that $U$ is open). As $T$ is
  supported in $K \subset U$, the claim follows.

  It follows from the two preceding claims that, setting
  \[
  \eta_n = p^\#\iota_L(\theta_n) * \Phi_{2^{-(n+1)}} -
  p^\#\iota_L(\theta_n) * \Phi_{2^{-n}}
  \]
  one has
  \begin{equation}
    \label{eq:varphiiotaKetan}
  \varphi(\iota_K(\eta_n)) = T(\eta_n) = (T_{n+1} - T_n)(\theta_n)
  \geq \frac{\bF^\alpha(T_{n+1} - T_n)}{2}
  \end{equation}
  Our last goal is to apply Lemma~\ref{lemma:LPcharge} with $\beta =
  1$, and for this, we need to estimate the norms $\| \iota_K(\eta_n)
  \|_{\bCH^{m, 1}(K)}$ and $\|\iota_L(\eta_n)\|_{\bCH^m(K)}$.

  Let $S \in \bN_m(K)$. Then
  \begin{align*}
    |\iota_K(\eta_n)(S)| & = |S(\eta_n)| \\ & = \left|
    \iota_L(\theta_n) \left( p_\#(S * \Phi_{2^{-(n+1)}} - S*
    \Phi_{2^{-n}})\right) \right| \\ & = \left| \iota_L(\theta_n)
    \left(S * \Phi_{2^{-(n+1)}} - S* \Phi_{2^{-n}}\right) \right| \\ &
    \leq \bN \left( S * \Phi_{2^{-(n+1)}} - S*
    \Phi_{2^{-n}}\right)^{1-\alpha} \bF \left(S * \Phi_{2^{-(n+1)}} -
    S* \Phi_{2^{-n}} \right)^\alpha
  \end{align*}
  as $\| \iota_L(\theta_n) \|_{\bCH^{m,\alpha}(L)} \leq 1$. Using
  Proposition~\ref{prop:22}(E) and (F), one first estimates
  \[
  |\iota_K(\eta_n)(S)| \leq C 2^{-n(\alpha - 1)} \bF(S)
  \]
  which entails that $\|\iota_K(\eta_n)\|_{\bCH^{m, 1}(K)} \leq
  2^{-n(1 - \alpha)}$. On the other hand, Proposition~\ref{prop:22}(G)
  implies that
  \[
  \bF \left(S * \Phi_{2^{-(n+1)}} - S* \Phi_{2^{-n}} \right) \leq
  \bF(S - S* \Phi_{2^{-(n+1)}}) + \bF(S - S * \Phi_{2^{-n}}) \leq C
  2^{n} \bN(S)
  \]
  thus $|\iota_K(\eta_n)(S)| \leq C 2^{n\alpha} \bN(S)$. Hence $\|
  \iota_K(\eta_n) \|_{\bCH^{m}(K)} \leq C 2^{n\alpha}$.

  Finally, by applying Lemma~\ref{lemma:LPcharge} to the sequences of
  charges
  \[(\iota_K(\eta_1), \iota_K(\eta_2), \dots, \iota_K(\eta_N),
  0, 0, \dots)
  \]
  where $N$ is an arbitrary integer, we obtain that
  \[
  M = \sup_{N \geq 1} \left\|\sum_{n=1}^N \iota_K(\eta_n)
  \right\|_{\bCH^{m,\alpha}(K)} < \infty
  \]
  Hence, by~\eqref{eq:varphiiotaKetan}
  \[
   \sum_{n=1}^N \bF^\alpha(T_{n+1} - T_n) \leq 2 \varphi \left(
   \sum_{n=1}^N \iota_K(\eta_n) \right) \leq 2M \| \varphi
   \|_{\bch^{m,\alpha}(K)^*}
  \]
  As $N$ is arbitrary, it follows that $(T_n)$ is a
  $\bF^\alpha$-Cauchy sequence in the Banach space
  $\bF^\alpha_m(L)$. Additionally, $(T_n)$ converges weakly to $T$,
  and thus $T \in \bF^\alpha_m(K)$.

  Finally, the linear continuous forms $\Upsilon(T)$ and $\varphi$
  coincide on all little fractional charges of the form
  $\iota_K(\omega)$, for $\omega \in C^\infty(\R^d; \wedge^m
  \R^d)$. By density, $\varphi = \Upsilon(T)$, which concludes the
  proof of the surjectivity of $\Upsilon$.
\end{proof}

\begin{coro}
  \label{coro:closureThm}
  The ball $\{T \in \bF^\alpha_m(K) : \bF^\alpha(T) \leq c\}$ is
  $\bF^\beta$-closed in $\bF^\beta_m(K)$ for all $\beta \in
  \mathopen{(} \alpha, 1 \mathclose{]}$.
\end{coro}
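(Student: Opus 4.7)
I will derive the corollary from the duality $\bF^\alpha_m(K) \cong \bch^{m,\alpha}(K)^*$ of Proposition~\ref{prop:dualch} via the Banach-Alao\u{g}lu theorem, supplemented by a smoothing argument to refine the resulting weak-$*$ norm estimate into a genuine $\bF^\alpha$-bound. First I reduce to the case where $K$ is convex: the $\bF^\alpha$- and $\bF^\beta$-norms of a current are intrinsic (they do not depend on the ambient compact set), and the support condition $\spt T \subset K$ is preserved under $\bF^\beta$-limits because $\bF \leq \bF^\beta$ and $\bF$-convergence implies weak convergence of currents. Replacing $K$ with a closed ball containing it, I assume from now on that $K$ is compact convex.

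Let $(T_n) \subset B_c = \{T \in \bF^\alpha_m(K) : \bF^\alpha(T) \leq c\}$ with $\bF^\beta(T_n - T) \to 0$. Since $|\langle \omega, T_n \rangle| \leq \|\omega\|_{\bCH^{m,\alpha}(K)} \bF^\alpha(T_n) \leq c\|\omega\|_{\bCH^{m,\alpha}(K)}$ for every $\omega \in \bch^{m,\alpha}(K)$, the sequence $(T_n)$ is bounded in $\bch^{m,\alpha}(K)^*$. Banach-Alao\u{g}lu produces a subsequence $(T_{n_j})$ converging weakly-$*$ in $\bch^{m,\alpha}(K)^*$ to some $S \in \bF^\alpha_m(K)$. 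To identify $S$ with $T$, I test against smooth forms: for any $\omega \in C^\infty(\R^d; \wedge^m \R^d)$, the charge $\iota_K(\omega)$ lies in $\bch^{m,\alpha}(K)$, so $T_{n_j}(\omega) = \iota_K(\omega)(T_{n_j}) \to \iota_K(\omega)(S) = S(\omega)$; simultaneously $\bF(T_n - T) \leq \bF^\beta(T_n - T) \to 0$ forces $T_n(\omega) \to T(\omega)$. Hence $S$ and $T$ agree on every smooth form, so $T = S$ as currents, and in particular $T \in \bF^\alpha_m(K)$.

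It remains to verify $\bF^\alpha(T) \leq c$. By Theorem~\ref{thm:duality}, this amounts to showing $|\omega(T)| \leq c$ for every $\omega \in \bCH^{m,\alpha}(K)$ with $\|\omega\|_{\bCH^{m,\alpha}(K)} \leq 1$. Given such $\omega$, I smooth it by setting $\hat{\omega} = p^\# \omega$, where $p \colon \R^d \to K$ is the orthogonal projection (which is $1$-Lipschitz), and defining $\omega_\varepsilon = \iota_K(\hat{\omega} * \Phi_\varepsilon) \in \bch^{m,\alpha}(K)$. By the pullback bound and Proposition~\ref{prop:estimatesSmoothing}(A), $\|\omega_\varepsilon\|_{\bCH^{m,\alpha}(K)} \leq 1$, so weak-$*$ convergence yields $|\omega_\varepsilon(T)| = \lim_j |\omega_\varepsilon(T_{n_j})| \leq c$. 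Finally, I pass $\varepsilon \to 0$: for $R \in \bN_m(K)$ the identity $\omega_\varepsilon(R) = \hat{\omega}(R * \Phi_\varepsilon)$ together with Proposition~\ref{prop:22}(E) and (G) and the charge continuity of $\hat{\omega}$ gives $\omega_\varepsilon(R) \to \omega(R)$, and the density of $\bN_m(K)$ in $\bF^\alpha_m(K)$ (Proposition~\ref{prop:basic}(E)) combined with the uniform bound $\|\omega_\varepsilon\|_{\bCH^{m,\alpha}(K)} \leq 1$ extends the convergence to $\omega_\varepsilon(T) \to \omega(T)$. Consequently $|\omega(T)| \leq c$, as required.

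\textbf{Main obstacle.} The delicate step is precisely this last norm upgrade: Banach-Alao\u{g}lu only provides $\|T\|_{\bch^{m,\alpha}(K)^*} \leq c$, and transferring it to an estimate against the strictly larger dual $\bCH^{m,\alpha}(K)$ requires the smoothing-plus-density argument described above. Everything else (the reduction to the convex case, the weak-$*$ extraction, and the identification of the weak-$*$ limit with the $\bF^\beta$-limit $T$) is routine once the duality of Proposition~\ref{prop:dualch} is in hand.
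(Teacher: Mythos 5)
Your proof follows the paper's route very closely — it rests on the same duality (Proposition~\ref{prop:dualch}), invokes Banach--Alao\u{g}lu, and identifies the weak* limit with the $\bF^\beta$-limit by testing against smooth forms. There is, however, one real gap in the weak*-extraction step, and one point of divergence from the paper.

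The gap: you write that ``Banach--Alao\u{g}lu produces a subsequence $(T_{n_j})$ converging weakly-$*$.'' Banach--Alao\u{g}lu yields weak* \emph{topological} compactness of bounded sets in $\bch^{m,\alpha}(K)^*$, not sequential compactness. To extract a convergent subsequence from a bounded sequence you need the weak* topology on bounded sets to be metrizable, which requires the predual $\bch^{m,\alpha}(K)$ to be separable. The paper proves this explicitly by factorizing $\iota_K$ through the separable space $C(K;\wedge^m\R^d)\times C(K;\wedge^{m+1}\R^d)$ via $\omega\mapsto(\omega_{\mid K},(\diff\omega)_{\mid K})$; your proof omits it. You could alternatively avoid sequential compactness entirely: show that $\bigl(\omega(T_n)\bigr)_n$ is Cauchy for each $\omega\in\bch^{m,\alpha}(K)$ (it converges for $\omega=\iota_K(\psi)$ because $T_n\to T$ weakly as currents, and then by the uniform bound $\bF^\alpha(T_n)\leq c$ and density of $\operatorname{im}\iota_K$), so $\varphi(\omega):=\lim_n\omega(T_n)$ defines an element of $\bch^{m,\alpha}(K)^*$, and Proposition~\ref{prop:dualch} supplies the $S\in\bF^\alpha_m(K)$ with $\Upsilon(S)=\varphi$, which again agrees with $T$ on smooth forms. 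Either fix is routine.

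The divergence: the paper concludes $T\in B$ in one stroke by asserting that $B$ itself is weak*-compact, which presupposes that $\bF^\alpha$ coincides with the $\bch^{m,\alpha}(K)^*$-dual norm (equivalently, that the isomorphism $\Upsilon$ of Proposition~\ref{prop:dualch} is isometric). Your ``norm upgrade'' step bypasses this by proving the bound $|\omega(T)|\leq c$ for every $\omega$ in the unit ball of $\bCH^{m,\alpha}(K)$, via the mollification $\omega_\varepsilon=\iota_K(p^\#\omega * \Phi_\varepsilon)\in\bch^{m,\alpha}(K)$, then invoking Theorem~\ref{thm:duality}. That argument is correct (your estimates via the $1$-Lipschitz projection $p$, Proposition~\ref{prop:estimatesSmoothing}(A), Proposition~\ref{prop:22}(E)(G), and density of $\bN_m(K)$ are all in order) and is more self-contained than the paper's, at the cost of length. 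Note also that once $T\in\bF^\alpha_m(K)$ is established, this last step does not actually need the weak* subsequence at all: since each $\omega_\varepsilon$ is a smooth form and $\bF(T_n-T)\to 0$, one gets $\omega_\varepsilon(T_n)\to\omega_\varepsilon(T)$ directly.
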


\begin{proof}
  There is no loss in generality in assuming that $K$ is compact
  convex. Let us show that $\bch^{m,\alpha}(K)$ is separable. The
  inequality~\eqref{eq:iotaKC0} guarantees that $\iota_K$ factorizes
  through
  \[
  \begin{tikzcd}
    C^\infty(\R^d; \wedge^m \R^d) \arrow[rr, "\varphi"] && C(K;
    \wedge^m \R^d) \times C(K; \wedge^{m+1} \R^d) \arrow[rr,
      "\tilde{\iota}_K"] && \R
  \end{tikzcd}
  \]
  where $\varphi$ is the map $\omega \mapsto (\omega_{\mid K}, (\diff
  \omega)_{\mid K})$ and $\tilde{\iota}_K$ is continuous. By the
  separability of the middle space, we infer that $\operatorname{im}
  \iota_K$ is separable, and so is $\bch^{m,\alpha}(K)$.

  By the Banach-Alao\u{g}lu theorem, the closed ball $B := \{T \in
  \bF^\alpha_m(K) : \bF^\alpha(T) \leq c\}$ is weakly*
  compact. Furthermore, the induced weak* topology on $B$ is
  metrizable, since $\bch^{m,\alpha}(K)$ is separable,
  by~\cite[2.6.23]{Megg}.

  Let $(T_n)$ be a sequence in $B$ that $\bF^\beta$-converges to some
  $T \in \bF^\beta_m(K)$. By the weak* sequential compactness of $B$,
  we suppose, up to extracting a subsequence, that $(T_n)$ weak*
  converges to $S \in B$. In particular, for each $\omega \in
  C^\infty_c(\R^d; \wedge^m \R^d)$, one has
  \[
  T_n(\omega) = \iota_K(\omega)(T_n) \to \iota_K(\omega)(S) =
  S(\omega)
  \]
  hence the convergence $T_n \to S$ is also weak (in the sense of
  currents). As $\bF^\beta$-convergence implies convergence in flat
  norm, that in turn implies weak convergence, we conclude that $T = S
  \in B$ by uniqueness of the weak limit. This finishes the proof.
\end{proof}

\subsection{Deformation theorem for fractional currents}
The following result shows how well fractional currents can be
approximated by polyhedral currents. It is the version of the
deformation theorem (see \cite[Chapter~6,
    Theorem~5.3]{Simo}) for $\alpha$-fractional currents. It gives the
``totally boundedness'' part of Theorem~\ref{thm:compactness}. 

Let $\varepsilon > 0$. For every subset $I \subset \{1, \dots, d\}$ of cardinal $m$ and $a \in \Z^d$, we define the map
\[
h_{I, a} \colon [0, 1]^m \to \R^d \colon x \mapsto \varepsilon \left( a + \sum_{k=1}^m x_i e_{i_k} \right)
\]
where $i_1 < \cdots < i_m$ are the distinct elements of $I$. Here, $e_1, \dots, e_d$ denotes the canonical basis of $\R^d$. We say that a current $P$ is a \emph{polyhedral $m$-current on the standard $\varepsilon$-grid} whenever it is a finite linear combination of currents of the form $h_{I, a\#} \left\llbracket [0, 1]^m \right\rrbracket$. 

\begin{thm}
  \label{thm:deformation}
  Let $K$ be a compact convex subset of $\R^d$ and $0 \leq \alpha <
  \beta \leq 1$. There is a constant $C = C(\alpha, \beta, K) \geq 0$
  such that for all $T \in \bF_m^\alpha(K)$ and $\varepsilon > 0$, one
  can find a polyhedral $m$-current $P$ on the standard $\varepsilon$-grid
  of $\R^d$ such that
  \[
  \bF^\beta(T - P) \leq C \varepsilon^{\beta - \alpha} \bF^\alpha(T)
  \text{ and } \spt P \subset B(K, C \varepsilon).
  \]
\end{thm}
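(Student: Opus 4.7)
The plan is to apply the classical Federer--Fleming deformation theorem \cite[4.2.9]{Fede} to a near-optimal $\alpha$-fractional decomposition of $T$, deforming only those pieces whose ratio $\bF(T_k)/\bN(T_k)$ exceeds the scale $\varepsilon$. Since $K$ is convex, Proposition~\ref{prop:basic}(E) lets me fix an $\alpha$-fractional decomposition $T = \sum_k T_k$ with each $T_k \in \bN_m(K)$ and $\sum_k \bN(T_k)^{1-\alpha}\bF(T_k)^\alpha \leq 2\bF^\alpha(T)$. I partition the indices into $A = \{k : \bF(T_k) \leq \varepsilon \bN(T_k)\}$ and its complement $B$. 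For $k \in A$ I set $P_k = 0$; for $k \in B$ I let $P_k$ be the polyhedral approximation of $T_k$ on the standard $\varepsilon$-grid produced by the classical deformation theorem, so that $\bN(P_k) \leq C\bN(T_k)$, $\bF(T_k - P_k) \leq C\varepsilon \bN(T_k)$, and $\spt P_k \subset B(K, C\varepsilon)$.

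To estimate $\bF^\beta(T_k - P_k)$ in terms of $\bN(T_k)^{1-\alpha}\bF(T_k)^\alpha$: for $k \in A$, the trivial inequality $\bF^\beta(T_k) \leq \bN(T_k)^{1-\beta}\bF(T_k)^\beta$ rewrites as $\bigl(\bF(T_k)/\bN(T_k)\bigr)^{\beta - \alpha}\bN(T_k)^{1-\alpha}\bF(T_k)^\alpha$, which is at most $\varepsilon^{\beta - \alpha}\bN(T_k)^{1-\alpha}\bF(T_k)^\alpha$ by the case assumption. For $k \in B$, the deformation bounds give $\bF^\beta(T_k - P_k) \leq \bN(T_k - P_k)^{1-\beta}\bF(T_k - P_k)^\beta \leq C\varepsilon^\beta \bN(T_k)$, and the assumption $\varepsilon\bN(T_k) < \bF(T_k)$ converts this into $C\varepsilon^{\beta-\alpha}\bN(T_k)^{1-\alpha}\bF(T_k)^\alpha$. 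Summing over $k$ produces the desired bound $\bF^\beta(T - P) \leq C \varepsilon^{\beta-\alpha}\bF^\alpha(T)$, provided $P := \sum_k P_k$ is a well-defined polyhedral current.

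To confirm this last point, only indices in $B$ contribute, and for such indices $\bN(T_k) \leq \varepsilon^{-\alpha} \bN(T_k)^{1-\alpha}\bF(T_k)^\alpha$; therefore $\sum_{k \in B}\bM(P_k) \leq C\varepsilon^{-\alpha}\bF^\alpha(T) < \infty$. Because all supports $\spt P_k$ sit inside the bounded set $B(K, C\varepsilon)$, which meets only finitely many $m$-faces of the $\varepsilon$-grid, $P$ is a finite real-linear combination of oriented grid $m$-faces supported in $B(K, C\varepsilon)$, as required. The delicate point is the calibration of the threshold separating Cases $A$ and $B$: the choice $\varepsilon$ itself is exactly what makes the trivial interpolation estimate in Case $A$ and the deformation error in Case $B$ yield the same power $\varepsilon^{\beta - \alpha}$, which is the fractional gain predicted by the statement.
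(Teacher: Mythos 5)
Your proof is correct, and it takes a genuinely different route from the paper's, which is worth noting. The paper decomposes each component $T_k$ as $A_k + \partial B_k$ with $\bM(A_k)+\bM(B_k)\leq 2\bF(T_k)$, applies the classical deformation theorem separately to $A_k$ and to $B_k$, and then truncates the series at an index $n$ chosen so the tail is small; the key point for them is to get $\bM(Q_k)\leq C\bF(T_k)$ (rather than $\leq C\bM(T_k)$) so that they can interpolate between the two bounds $\bF(T_k - Q_k - \partial\tilde Q_k)\leq C\bF(T_k)$ and $\leq C\varepsilon\bN(T_k)$. You instead apply deformation directly to $T_k$ and partition the index set by the ratio $\bF(T_k)/\bN(T_k)$ against the scale $\varepsilon$; for the small-ratio indices the trivial single-term bound $\bF^\beta(T_k)\leq\bN(T_k)^{1-\beta}\bF(T_k)^\beta$ already produces the factor $\varepsilon^{\beta-\alpha}$, and for the large-ratio indices the deformation error $C\varepsilon^\beta\bN(T_k)$ does the same job once you trade $\varepsilon\bN(T_k)<\bF(T_k)$. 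Your threshold is calibrated so both cases yield the same power of $\varepsilon$, which is a nice observation and spares you the $A_k+\partial B_k$ splitting and the deformation of the $(m+1)$-dimensional piece $B_k$. The paper's truncation (keeping the first $n$ components) and your ratio-based partition both produce a polyhedral $P$, though by different mechanisms: the paper keeps only finitely many indices, while you keep possibly infinitely many but show $\sum_{k\in B}\bM(P_k)<\infty$ and use that $B(K,C\varepsilon)$ meets only finitely many grid $m$-faces, so the infinite sum collapses to a finite polyhedral chain. Both arguments are sound; yours is arguably more economical.
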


\begin{proof}
  Consider a decomposition $T = \sum_{k} T_k$ into normal currents
  such that
  \begin{equation}
    \label{eq:decT}
    \sum_{k=0}^\infty \bN(T_k)^{1 - \alpha} \bF(T_k)^\alpha \leq 2
    \bF^\alpha(T) \text{ and } \spt T_k \subset K.
  \end{equation}
  For each $k$, introduce two normal currents $A_k \in \bN_m(K)$ and
  $B_k \in \bN_{m+1}(K)$ such that $T_k = A_k + \partial B_k$ and
  $\bM(A_k) + \bM(B_k) \leq 2 \bF(T_k)$. In particular, $\bM(A_k) \leq
  2 \bM(T_k)$. We apply the deformation theorem \cite[Chapter~6,
    Theorem~5.3]{Simo} at scale $\varepsilon$ to $A_k$ and $B_k$,
  which provides decompositions
  \[
  A_k = Q_k + \partial R_k + S_k \text{ and } B_k = \tilde{Q}_k +
  \partial \tilde{R}_k + \tilde{S}_k
  \]
  where $Q_k$ and $\tilde{Q}_k$ are polyhedral currents on the standard $\varepsilon$-grid, and $R_k, S_k, \tilde{R}_k, \tilde{S}_k$ are normal currents 
  with the estimates
  \begin{gather}
    \bM(R_k) \leq C \varepsilon \bM(A_k) \leq C \varepsilon \bM(T_k) \label{eq:mRk} \\
    \bM(S_k) \leq C \varepsilon \bM(\partial A_k) = C \varepsilon
    \bM(\partial T_k) \label{eq:mSk} \\
    \bM(Q_k) \leq C \bM(A_k) \leq C \bF(T_k) \label{eq:mQk} \\
    \bM(\partial Q_k) \leq C \bM(\partial A_k) = C \bM(\partial T_k) \label{eq:mpartialQk} \\
%
%    \bM(\tilde{R}_k) \leq C \varepsilon \bM(B_k) \leq C \varepsilon
%    \bF(T_k) \label{eq:mRtildek} \\
%
    \bM(\tilde{S}_k) \leq C \varepsilon \bM(\partial B_k) \leq C
    \varepsilon (\bM(T_k) + \bM(A_k)) \leq C \varepsilon
    \bM(T_k) \label{eq:mStildek} \\
    \bM(\tilde{Q}_k) \leq C \bM(B_k) \leq C
    \bF(T_k) \label{eq:mQtildek} \\
    \bM(\partial \tilde{Q}_k) \leq C \bM(\partial B_k) \leq C \left( \bM(T_k) + \bM(A_k) \right) \leq C \bM(T_k) \label{eq:mpartialQtildek}
  \end{gather}
  and with $\spt Q_k, \spt \tilde{Q}_k \subset B(K, C \varepsilon)$.
  Let $n$ be such that
  \begin{equation}
    \label{eq:azeqsd}
    \sum_{k=n+1}^\infty \bN(T_k)^{1 - \beta} \bF(T_k)^\beta \leq
  \sum_{k=n+1}^\infty \bN(T_k)^{1 - \alpha} \bF(T_k)^\alpha \leq
  \varepsilon^{\beta - \alpha} \bF^\alpha(T)
  \end{equation}
  and define
  \[
  P = \sum_{k=0}^n (Q_k + \partial \tilde{Q}_k)
  \]
  which is clearly a polyhedral current on the $\varepsilon$-grid, with
  support in $B(K,C\varepsilon)$. We have
  \[
  \bF(Q_k + \partial \tilde{Q}_k) \leq \bM(Q_k) + \bM(\tilde{Q}_k)
  \leq C \bF(T_k)
  \]
  by~\eqref{eq:mQk} and~\eqref{eq:mQtildek}. This clearly implies that
  \begin{equation}
    \label{eq:fTQk1}
    \bF(T_k - Q_k - \partial \tilde{Q}_k) \leq C \bF(T_k)
  \end{equation}
  On the other hand,
  \begin{align}
  \bF(T_k - Q_k - \partial \tilde{Q}_k) & = \bF(S_k + \partial (R_k +
  \tilde{S}_k)) \notag \\ & \leq \bM(S_k) + \bM(R_k) +
  \bM(\tilde{S}_k) \notag \\ & \leq C
  \varepsilon\bN(T_k) \label{eq:fTQk2}
  \end{align}
  by~\eqref{eq:mSk}, \eqref{eq:mRk} and \eqref{eq:mStildek}. Now,
  by~\eqref{eq:fTQk1} and~\eqref{eq:fTQk2},
  \begin{equation}
    \label{eq:fTQk3}
    \bF(T_k - Q_k - \partial \tilde{Q}_k) \leq C \varepsilon^{1 -
      \alpha/\beta} \bN(T_k)^{1 - \alpha/\beta} \bF(T_k)^{\alpha/\beta}.
  \end{equation}
  Furthermore, one has
  \begin{equation}
    \label{eq:nTQk3}
  \bN(T_k - Q_k - \partial \tilde{Q}_k) \leq \bN(T_k) + \bM(Q_k) +
  \bM(\partial Q_k) + \bM(\partial \tilde{Q}_k) \leq C \bN(T_k)
  \end{equation}
  by~\eqref{eq:mQk}, \eqref{eq:mpartialQk} and
  \eqref{eq:mpartialQtildek}.  Therefore,
  \begin{align*}
    \bF^\beta(T - P) & \leq \sum_{k=0}^n \bN(T_k - Q_k - \partial
    \tilde{Q}_k)^{1 - \beta} \bF(T_k - Q_k - \partial
    \tilde{Q}_k)^\beta + \sum_{k=n+1}^\infty \bF^\beta(T_k) \\ & \leq C
    \varepsilon^{\beta - \alpha} \sum_{k=0}^n \bN(T_k)^{1 - \alpha}
    \bF(T_k)^\alpha + \sum_{k=n+1}^\infty \bN(T_k)^{1 - \beta}
    \bF(T_k)^\beta \\ & \leq C \varepsilon^{\beta - \alpha} \bF^\alpha(T)
  \end{align*}
  by~\eqref{eq:fTQk3}, \eqref{eq:nTQk3}, \eqref{eq:azeqsd} and
  \eqref{eq:decT}.
\end{proof}

\begin{proof}[Proof of the compactness theorem~\ref{thm:compactness}]
  There is no loss in generality in assuming that $K$ is convex. In
  that case, the ball $B = \{T \in \bF^\alpha_m(K) : \bF^\alpha(T)
  \leq c\}$ is $\bF^\beta$-closed by
  Corollary~\ref{coro:closureThm}. It remains to prove that it is
  $\bF^\beta$-totally bounded. Let $\varepsilon \in \mathopen{(} 0, 1
  \mathclose{]}$ and $p \colon \R^d\to K$ be the orthogonal projection
    onto $K$. By the deformation theorem~\ref{thm:deformation}, there
    exists, for any $T \in B$, a polyhedral $m$-current $P$ with $\spt
    P \subset B(K, C\varepsilon)$ and
    \begin{equation}
      \label{eq:distTpP}
  \bF^\beta(T - p_\#P) \leq \bF^\beta(T - P) \leq C \varepsilon^{\beta - \alpha} \bF^\alpha(T)
  \leq Cc \varepsilon^{\beta - \alpha}
  \end{equation}
  Besides,
  \begin{align*}
    \bF^\beta(p_\#P) & \leq \bF^\beta(T) + \bF^\beta(T - p_\# P) \\
    & \leq \bF^\alpha(T) + Cc \varepsilon^{\beta - \alpha} \\
    & \leq Cc
  \end{align*}
  The set
  \[
  X(\varepsilon) = \left\{p_\# P : P \text{ is a polyhedral
  }m\text{-current with } \spt P \subset B(K,C\varepsilon) \text{ and
  } \bF^\beta(p_\#P) \leq Cc \right\}
  \]
  is $\bF^\beta$-compact, being a closed and bounded subset of a
  finite-dimensional space. By~\eqref{eq:distTpP}, every element of
  $B$ is at a distance at most $Cc \varepsilon^{\beta - \alpha}$ from
  an element of $X(\varepsilon)$. By the arbitrariness of
  $\varepsilon$, we conclude that $B$ is $\bF^\beta$-compact.
\end{proof}

The following proposition clarifies several notions of convergences in $\bF^\alpha_m(K)$.

\begin{prop}
  Let $(T_n)$ be a bounded sequence in $\bF^\alpha(K)$, where $0 \leq
  \alpha < 1$. The following are equivalent:
  \begin{itemize}
  \item[(A)] $T_n \to T$ weakly;
  \item[(B)] for all $\beta \in \mathopen{(} \alpha, 1 \mathclose{]}$, one has $\bF^\beta(T_n - T) \to 0$;
  \item[(C)] $\bF(T_n - T) \to 0$.
  \end{itemize}
\end{prop}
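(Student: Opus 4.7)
The plan is to establish (B) $\Rightarrow$ (C) $\Rightarrow$ (A) trivially and to close the loop via (A) $\Rightarrow$ (B), which is where the compactness theorem does the work. First I observe that (B) $\Rightarrow$ (C) by specialising to $\beta = 1$ and recalling $\bF^1 = \bF$. Next, (C) $\Rightarrow$ (A) follows because the supremum formula defining the flat norm shows that $\bF(T_n - T) \to 0$ forces $T_n(\omega) \to T(\omega)$ for every compactly supported smooth $m$-form $\omega$.

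For the substantive direction (A) $\Rightarrow$ (B), I fix $\beta \in \mathopen{(}\alpha, 1\mathclose{]}$ and apply the standard subsequence principle: it suffices to show that every subsequence of $(T_n)$ admits a further subsequence that $\bF^\beta$-converges to $T$. Given such a subsequence $(T_{n_k})$, the hypothesis furnishes $c \defeq \sup_n \bF^\alpha(T_n) < \infty$, so $(T_{n_k})$ lies in the $\bF^\beta$-compact ball $\{ S \in \bF^\alpha_m(K) : \bF^\alpha(S) \leq c \}$ provided by Theorem~\ref{thm:compactness}. Extracting a further subsequence, I obtain $\bF^\beta$-convergence to some $S \in \bF^\alpha_m(K)$. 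Since $\bF \leq \bF^\beta$, this convergence is also in flat norm, and hence weak, so by uniqueness of weak limits one has $S = T$. The subsequence principle then delivers $\bF^\beta(T_n - T) \to 0$ for the full sequence.

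The only conceptual point is the identification of the $\bF^\beta$-limit with the weak limit $T$, which is immediate from the chain $\bF \leq \bF^\beta$. All the analytic content has already been packaged into Theorem~\ref{thm:compactness}, and the proof thereby reduces to a routine compactness-plus-uniqueness-of-limit argument; no further estimates are required.
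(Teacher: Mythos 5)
Your proof is correct and follows essentially the same route as the paper's: both handle (B) $\Rightarrow$ (C) $\Rightarrow$ (A) as immediate consequences of $\bF^1 = \bF$ and of the fact that flat-norm convergence implies weak convergence, and both prove (A) $\Rightarrow$ (B) by invoking Theorem~\ref{thm:compactness} together with the subsequence principle and uniqueness of weak limits. Your version is slightly more explicit about locating the subsequence inside the $\bF^\beta$-compact ball of radius $c = \sup_n \bF^\alpha(T_n)$, but the underlying argument is identical.
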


\begin{proof}
  Clearly, (B) $\implies$ (C) (by taking $\beta = 1$) and (C)
  $\implies$ (A). Suppose (A). Any subsequence of $(T_{n_k}))$ has an
  $\bF^\beta$-convergent subsequence $(T_{n_{k_\ell}})$ by
  Theorem~\ref{thm:compactness}. Since $\bF^\beta$-convergence is
  stronger than weak convergence, we conclude that the $\bF^\beta$-limit
  of $(T_{n_{k_\ell}})$ is $T$. This implies that $T_n \to T$ in
  $\bF^\beta(K)$.
\end{proof}

\section{Pushforward by Hölder maps}
\label{sec:holderpf}

% gamma(m + alpha) = m + beta
% beta > alpha
% 
% On considere T in F^alpha_m
% (f_n) suite |f - f_n|infty <= C 2^(-n gamma)
%             Lip f_n        <= C 2^(n (1 - gamma))
%
% On prend omega in CH^(m, beta)
% 
% Il faut estimer   (f_(n+1)^# - f_n^#) omega en norme CH^m et CH^(m, beta)
%
% En déduire que ( f_n# T) est bornée dans F^beta_m
% Existence d'une sous-suite convergente ?

%% Van Koch snowflake
%% dimension = ln 4/ln 3 sim 1.262
%% alpha = 0.262
%%
\subsection{Main result}
\label{subsec:mainH}
The following theorem shows that it is possible to make sense of
pushforwards of fractional currents by Hölder maps. In the subsequent
subsection, we will extend this construction in the top-dimensional
case $m = d'$.
\begin{thm}
  \label{thm:holderpush}
  Let $K \subset \R^d$ be a compact set. Let $0 \leq \alpha < \beta <
  1$ and $0 < \gamma < 1$ such that
  \[
  \frac{m + \alpha}{\gamma} = m + \beta.
  \]
  For all $\gamma$-Hölder continuous maps $f \colon K \to \R^{d'}$,
  there is a unique linear map
  \[
  f_\# \colon \bF_m^\alpha(K) \to \bF_m^\beta(\R^{d'})
  \]
  such that
  \begin{itemize}
  \item[(A)] in case $f$ is a Lipschitz map, $f_\#$ is the usual
    pushforward operator;
  \item[(B)] for all $T \in \bF_m^\alpha(K)$, one has
    \begin{gather*}
    \bF^\beta(f_\# T) \leq C \max \left\{ \left(\rmLip^\gamma f
    \right)^{m - 1 + \beta}, \left(\rmLip^\gamma f \right)^{m + \beta}
    \right\} \bF^\alpha(T) \qquad \text{if } m \geq 1 \\
    \bF^\beta(f_\# T) \leq C \max \left\{ 1, \left(\rmLip^\gamma f \right)^{\beta} \right\} \bF^\alpha(T) \qquad \text{if } m = 0,
    \end{gather*}
    where the constant $C = C(d, \alpha, \beta)$;
%  \item[(C)] if $f, g \in \rmLip^\alpha(K; \R^{d'})$ and $\delta \in
%    \mathopen{(} \beta, 1 \mathclose{]}$, then
%      \[
%      \bF^\delta(f_\# T - g_\# T) \leq C \| f - g \|_\infty \max \{
%      \rmLip^\gamma f, \rmLip^\gamma g \} \bF^\alpha(T).
%      \]
  \item[(C)] if $(f_n)$ is a sequence in $\rmLip^\gamma(K; \R^{d'})$
    that converges pointwise to $f$ with $\sup_n \rmLip^\gamma(f_n) <
    \infty$, then $f_{n\#}T \to f_\# T$ in flat norm, and therefore
    weakly.
  \end{itemize}
\end{thm}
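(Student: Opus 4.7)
The plan is to define $f_\# T$ as the $\bF^\beta$-limit of Lipschitz pushforwards $(f_{n\#}T)$, where $f_n = f * \Phi_{2^{-n}}$ is the convolution of $f$ (first extended componentwise to $\R^d$ while preserving the Hölder constant $L_\gamma \defeq \rmLip^\gamma f$) against the standard mollifier. These smooth maps satisfy $\rmLip f_n \leq CL_\gamma 2^{n(1-\gamma)}$ and $\|f_{n+1}-f_n\|_\infty \leq CL_\gamma 2^{-n\gamma}$ on any fixed compact enlargement of $K$.

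The main technical step is to show that $(f_{n\#}T)$ is $\bF^\beta$-Cauchy with limit satisfying the bound in~(B). Pick a decomposition $T = \sum_k T_k$ into normal currents supported in a common compact set with $\sum_k \bN(T_k)^{1-\alpha}\bF(T_k)^\alpha \leq 2\bF^\alpha(T)$. Proposition~\ref{prop:22} supplies two complementary bounds for each increment:
\[
\bF(f_{n+1\#}T_k - f_{n\#}T_k) \leq C\|f_{n+1}-f_n\|_\infty(\rmLip f_n)^m \bN(T_k),
\]
\[
\bF(f_{n+1\#}T_k - f_{n\#}T_k) \leq C(\rmLip f_n)^{m+1}\bF(T_k),
\]
together with $\bN(f_{n+1\#}T_k - f_{n\#}T_k) \leq C(\rmLip f_n)^m\bN(T_k)$. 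Interpolating via $\bF^\beta(S) \leq \bN(S)^{1-\beta}\bF(S)^\beta$ and invoking the arithmetic identities $m(1-\gamma) - \gamma\beta = -\alpha$ and $(1-\gamma)(m+\beta) = \beta-\alpha$ (both direct consequences of $(m+\alpha)/\gamma = m+\beta$) yields
\[
\bF^\beta(f_{n+1\#}T_k - f_{n\#}T_k) \leq CL_\gamma^{m+\beta}\min\!\bigl(2^{-n\alpha}\bN(T_k),\, 2^{n(\beta-\alpha)}\bN(T_k)^{1-\beta}\bF(T_k)^\beta\bigr).
\]
Splitting $\sum_n$ at the threshold $n_{0,k}$ where $2^{-n_{0,k}}\approx \bF(T_k)/\bN(T_k)$ (the crossover of the two sides of the minimum), each half contributes at most $C\bN(T_k)^{1-\alpha}\bF(T_k)^\alpha$, so summation over $k$ delivers the Cauchy property and the bound in~(B).

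Properties~(A) and~(C) then follow by standard arguments. When $f$ is itself Lipschitz, $f_n \to f$ uniformly on $K$ and Proposition~\ref{prop:22}(D) yields $f_{n\#}T \to f_\#T$ in flat norm, matching the classical pushforward. For~(C), given $(g_n)$ in $\rmLip^\gamma(K;\R^{d'})$ converging pointwise to $f$ with $\sup_n \rmLip^\gamma g_n < \infty$, mollify both $g_n$ and $f$ at scale $2^{-N}$; for each fixed $N$ the mollified maps converge uniformly by equicontinuity plus pointwise convergence, and Proposition~\ref{prop:22}(D) gives flat-norm convergence of the Lipschitz pushforwards, while the mollification error is bounded uniformly in $n$ by the $\bF^\beta$-estimate from the proof of~(B) and tends to $0$ as $N \to \infty$. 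Uniqueness of $f_\#$ is then immediate from~(A) and~(C).

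The main obstacle is the delicate arithmetic that makes the two complementary pushforward estimates interlock into a single $\bF^\beta$-bound valid across all scales and all decomposition pieces, combined with careful bookkeeping of supports (each $f_{n\#}T$ lies in $B(f(K), CL_\gamma 2^{-n\gamma})$). The boundary case $\alpha = 0$ is borderline, as the tail $\sum_n 2^{-n\alpha}$ degenerates; it is handled by applying the decomposition argument to a Littlewood-Paley expansion of $T$ rather than to $T$ itself, leveraging the fact that $m - (m+1)\gamma = \gamma(\beta-1) < 0$ already secures flat-norm convergence of $(f_{n\#}T)$. The case $m = 0$ proceeds analogously using the mildly different Lipschitz constants from Proposition~\ref{prop:basic}(F).
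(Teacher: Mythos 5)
Your approach to (B) genuinely diverges from the paper's. The paper establishes $\bF^\delta$-convergence of $(f_{n\#}T)$ only for $\delta > \beta$ (combining all three pushforward estimates at once, with rate $2^{-n\gamma(\delta-\beta)}$), and then recovers the $\bF^\beta$-bound by a \emph{duality} argument: testing against a unit-norm $\omega \in \bCH^{m,\beta}$, applying the Littlewood--Paley lemma for fractional charges (Lemma~\ref{lemma:LPcharge}) to the pullbacks $f_{n+1}^\#\omega - f_n^\#\omega$, and invoking the compactness theorem~\ref{thm:compactness} (more precisely Corollary~\ref{coro:closureThm}) to pass from ``bounded in $\bF^\beta$, convergent in $\bF^\delta$'' to membership in $\bF^\beta$ with the right bound. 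You instead build an explicit $\beta$-fractional decomposition $(f_{0\#}T, U_0, U_1, \dots)$ of $f_\#T$ and bound its weight $\sum_n \bF^\beta(U_n)$ directly, by taking the minimum of two interpolated estimates for $\bF^\beta(U_{n,k})$ and splitting the sum over $n$ at the scale $n_{0,k}$ where $2^{-n_{0,k}} \approx \bF(T_k)/\bN(T_k)$. This is a cleaner, more elementary route that bypasses Section~\ref{sec:charges}, Lemma~\ref{lemma:LPcharge}, and Theorem~\ref{thm:compactness} entirely, and the arithmetic checks out (I verified $m(1-\gamma)-\gamma\beta = -\alpha$ and $(1-\gamma)(m+\beta) = \beta-\alpha$, and the resulting bound $\sum_n \bF^\beta(U_{n,k}) \leq C L_\gamma^{m+\beta}\bN(T_k)^{1-\alpha}\bF(T_k)^\alpha$).

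However, the case $\alpha = 0$, which the theorem explicitly permits, is a genuine gap. Your split gives, for each fixed $k$, a tail $\sum_{n > n_{0,k}} 2^{-n\alpha}\bN(T_k)$ which diverges when $\alpha = 0$; equivalently, the constant in $C(\alpha,\beta,d)$ degenerates as $\alpha \downarrow 0$. You acknowledge this, but the proposed repair --- ``applying the decomposition argument to a Littlewood--Paley expansion of $T$'' --- does not close it. If one replaces the trivial decomposition $T = T + 0 + \cdots$ of a normal $T$ by the LP pieces $T_j = T*\Phi_{2^{-(j+1)}} - T*\Phi_{2^{-j}}$, one has $\bN(T_j) \lesssim \bN(T)$ (not decaying) and $\bF(T_j) \lesssim 2^{-j}\bN(T)$, so the crossover scale becomes $n_{0,j}\approx j$, and the double sum $\sum_j\sum_{n>j}\bN(T)$ still diverges. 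The divergence is structural: the natural candidate decomposition $(f_{0\#}T, U_0, U_1,\dots)$ of $f_\#T$ satisfies only $\bN(U_n)^{1-\beta}\bF(U_n)^\beta \lesssim \bN(T)$ (constant in $n$) when $\alpha = 0$, so a genuinely different decomposition of $f_\# T$ is needed; the flat-norm convergence you point out is not by itself enough to produce one. (Two smaller points: in your sketches of (A) and (C) you invoke Proposition~\ref{prop:22}(D) on $T$ directly, but that proposition applies to \emph{normal} currents --- you must route through the decomposition $(T_k)$ and interpolate, exactly as you do for (B); and your argument delivers only the factor $L_\gamma^{m+\beta}$, whereas the stated constant is $\max\{L_\gamma^{m-1+\beta}, L_\gamma^{m+\beta}\}$, which requires the rescaling trick $f \mapsto \varphi_r\circ f$ when $L_\gamma < 1$.)
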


This theorem is of course also true in the case $\gamma = 1$ where $f$
is Lipschitz continuous.  In fact, this has already been treated in
Proposition~\ref{prop:basic}(F). In this special case, $\alpha =
\beta$ can be equal to $1$.

The following example shows, however, that the restriction $\beta < 1$
is natural in the purely Hölder case $\gamma < 1$.  Suppose $m=1$,
$\alpha = 0$, and $\gamma \in (1/2, 1)$. Suppose $f_1,f_2$ are two
$\gamma$-Hölder continuous functions $[0, 1] \to \R$, and let
$f=(f_1,f_2)$. By the preceding theorem, the pushforward $f_\#
\llbracket 0,1\rrbracket$ defines a $\left( \frac{1}{\gamma} - 1
\right)$-fractional current.

In particular, we can evaluate this current against the smooth
differential form $x \diff y$, which yields the Young integral
\[
\int_0^1 f_1 \diff f_2 =: f_\#\llbracket 0, 1 \rrbracket (x \diff y).
\]
However, in the critical case $\gamma = 1/2$ (corresponding to $\beta = 1$),
Young's theory breaks down and the above expression cannot, in general,
be given a meaningful interpretation. In stochastic settings, most notably
when $f_2$ is a sample path of Brownian motion and $f_1$ belongs to a suitable
class of processes, the integral can instead be interpreted in the sense of
Itô. This construction, however, is intrinsically probabilistic and is not
determined solely by the sample paths. We refer to \cite{friz-hairer} for a
detailed discussion of how the integral can be defined pathwise in certain cases by enhancing 
$f_2$ to a rough path.
%Let us mention in passing that the results of
%Section~\ref{sec:ext} will allow to recover the Young integral, even
%in the case where \( f_1 \) and \( f_2 \) have different Hölder
%exponents, \(\gamma_1\) and \(\gamma_2\), under the sharp condition
%\(\gamma_1 + \gamma_2 > 1\).

\begin{proof}[Proof of Theorem~\ref{thm:holderpush}]
  Throughout the proof, we will suppose $m \geq 1$. The case $m = 0$
  requires to simplify some of the estimates below. This work is left
  to the reader.
  
  \textbf{Step 1. There exists a sequence $(f_n) \in
    \rmLip^\gamma(\R^d; \R^{d'})$ such that, for all $n$,
    \begin{gather}
      \| f_n - f \|_{\infty, K} \leq C 2^{-n\gamma} \rmLip^\gamma f \label{eq:a1}\\
      \| f_{n+1} - f_n \|_{\infty} \leq C 2^{-n\gamma} \rmLip^\gamma f \label{eq:a2} \\
      \rmLip f_n \leq C 2^{n(1 - \gamma)} \rmLip^\gamma f \label{eq:a3}
    \end{gather}
    where $C = C(d, d')$.  } First extend $f$ to a function $\hat{f}$
  defined on $\R^d$, such that $\|\hat{f}\|_\infty = \|f \|_{\infty,
    K}$ and $\rmLip^\gamma(\hat{f}) \leq C \rmLip^\gamma(f)$ (it is
  possible to choose $C = \sqrt{d'}$). Then we define, for each $n$,
  the function $f_n = \hat{f} * \Phi_{2^{-n}}$. It is clear that
  \[
  \|f_n - \hat{f}\|_\infty \leq 2^{-n\gamma} \rmLip^\gamma( \hat{f})
  \leq C 2^{-n\gamma} \rmLip^\gamma(f).
  \]
  Since $\|f_n - f\|_{\infty, K} \leq \|f_n - \hat{f}\|_{\infty}$,
  this yields~\eqref{eq:a1}. Inequality~\eqref{eq:a2} follows since
  $\|f_{n+1} - f_n\|_{\infty} \leq \| f_{n+1} - \hat{f}\|_\infty +
  \|\hat{f} - f_{n}\|_\infty$. Finally, $f_n$ is smooth and for all $x
  \in \R^d$,
  \begin{align*}
    \nabla f_n(x) & = 2^{-nd} \cdot 2^n \int_{\R^d} \hat{f}(y) \nabla
    \Phi\left(2^n(x - y)\right) \diff y \\ & = 2^{-nd} \cdot 2^n \int_{B(x, 2^{-n})}
    (\hat{f}(y) - \hat{f}(x)) \nabla \Phi\left(2^n(x - y)\right) \diff
    y
  \end{align*}
  This implies that
  \[
  | \nabla f_n(x) | \leq 2^{-nd} 2^n \calL^d(B(x, 2^{-n})) 2^{-n\gamma} \rmLip^\alpha f \| \nabla \Phi \|_\infty
  \]
  and from that one easily infers~\eqref{eq:a3}.

  \textbf{Step 2. Uniqueness.} There exists a sequence of $(f_n)$ in
  $\rmLip^\gamma(K; \R^{d'})$ that converges to $f$ in the sense of
  (C). For instance, we can consider the restrictions to $K$ of the functions
  constructed in Step 1.

  If a pushforward operator $f_\#$ satisfies (A) and (C), then necessarily
  \[
  f_\# T = \lim_{n \to \infty} f_{n\#}T
  \]
  in flat norm, for all $T \in \bF_m^\alpha(K)$. This shows that $f_\#$ is uniquely determined.

  The rest of the proof, steps 3 to 6, is devoted to showing the
  existence of the pushforward operator $f_\#$.

  \textbf{Step 3. For any sequence $(f_n)$ in $\rmLip^\gamma(\R^d;
    \R^{d'})$ such that \eqref{eq:a1}, \eqref{eq:a2} and \eqref{eq:a3}
    hold for all~$n$, the sequence $(f_{n\#} T)$ converges in
    $\bF^\delta$ norm, for every $\beta < \delta \leq 1$, to a current
    denoted $f_\# T$.}  Before we prove this claim, let us make two
  remarks.
  \begin{itemize}
  \item In fact, we will prove that the stronger result that series
    $\sum_n \bF^\delta((f_{n+1})_\# T - f_{n\#} T)$ converges
    $\bF^\delta$-absolutely. Since all the currents $f_{n\#}T$ are
    supported in some common compact set, bigger than $f(K)$, this
    will conclude.
  \item By the standard interlacing argument, the limit $f_\# T$ does
    not depend on the choice of the approximating sequence $(f_n)$.
  \end{itemize}

  Let $(T_k)$ be an $\alpha$-fractional decomposition of $T$. Set $U_n
  = (f_{n+1})_\# T - f_{n\#} T$ and $U_{n,k} = (f_{n+1})_\# T_k -
  f_{n\#} T_k$. We first estimate
  \begin{align*}
    \bF(U_{n,k}) & \leq 2 \max \{(\rmLip f_n)^m, (\rmLip f_n)^{m+1},
    (\rmLip f_{n+1})^m, (\rmLip f_{n+1})^{m+1}\} \bF(T_k) \\ & \leq C
    2^{n(1-\gamma)(m+1)} \bF(T_k).
  \end{align*}
  Note that in this step (only), the constant $C$ depends on $f$ (via
  $\rmLip^\gamma f$), but not on $n$.  We can control the flat norm of
  $U_{n,k}$ in a different manner, using Proposition~\ref{prop:22}(D).
  \begin{align*}
  \bF(U_{n,k}) & \leq \|f_{n+1} - f_n\|_\infty \max \{(\rmLip f_n)^m,
  (\rmLip f_n)^{m-1}, (\rmLip f_{n+1})^m, (\rmLip f_{n+1})^{m-1} \}
  \bN(T_k) \\ & \leq C 2^{-n \gamma} 2^{n(1-\gamma)m} \bN(T_k).
  \end{align*}
  Finally, the estimate for the normal mass of $U_{n,k}$ is
  \begin{align*}
    \bN(U_{n,k}) & \leq 2 \max \{(\rmLip f_n)^m, (\rmLip f_n)^{m-1},
    (\rmLip f_{n+1})^m, (\rmLip f_{n+1})^{m-1}\} \bN(T_k) \\ & \leq C
    2^{n(1 - \gamma)m} \bN(T_k).
  \end{align*}
  Those three estimates imply that
  \begin{align*}
    \bF(U_{n,k})^\delta \bN(U_{n,k})^{1 - \delta} & \leq
    \bF(U_{n,k})^\alpha \bF(U_{n,k})^{\delta - \alpha} \bN(U_{n,k})^{1
      - \delta} \\ & \leq C 2^{n(1-\gamma) \left((m+1) \alpha +
      m(\delta - \alpha) + m (1-\delta) \right)} 2^{-n\gamma(\delta - \alpha)}
    \bF(T_k)^\alpha \bN(T_k)^{1-\alpha} \\
    & \leq C 2^{-n\gamma(\delta - \beta)} \bF(T_k)^\alpha \bN(T_k)^{1-\alpha}.
  \end{align*}
  After summing over $k$, one obtains
  \[
  \bF^\delta(U_n) \leq C 2^{-n\gamma(\delta - \beta)}
  \sum_{k=0}^\infty \bF(T_k)^\alpha \bN(T_k)^{1-\alpha}.
  \]
  We finally obtain that $\sum_{n=0}^\infty \bF^\delta(U_n) < \infty$,
  as the geometric series $\sum_{n=0}^\infty 2^{-n\gamma(\delta -
    \beta)}$ converges.

  \textbf{Step 4. Proof of (A).} It suffices to extend $f$ to a Hölder
  map $\R^d \to \R^{d'}$ and then choose the approximating sequence
  $f_n = f$.

  \textbf{Step 5. Proof of (B).} Let $(f_n)$ be a sequence that
  satisfies~\eqref{eq:a1}, \eqref{eq:a2}, \eqref{eq:a3}. Let $K'$ be
  the compact convex hull of $K$ and $L$ be a compact set that
  contains $\bigcup_{n=0}^\infty f_n(K')$. We will show that $(f_{n\#}
  T)$ is bounded in $\bF^\beta(L)$. Because of the compactness theorem~\ref{thm:compactness}
  (recall $\beta < 1$) and step~3, we will then deduce that $f_{\#} T$
  is in $\bF^\beta(L)$ and $\bF^\beta(f_\# T) \leq \sup_n
  \bF^\beta(f_{n\#} T)$.

  For technical reasons, we will first suppose that $\rmLip^\gamma f
  \geq 1$.  Let $\omega \in \bCH^{m, \beta}(L)$ such that $\| \omega
  \|_{\bCH^{m,\beta}(L)} = 1$. Considering each Lipschitz function
  $f_n$ restricted to $K' \to L$, we can define the sequence of
  charges $\omega_n = f_{n+1}^\# \omega - f_{n}^\# \omega \in
  \bCH^{m,\beta}(K')$ for all $n \geq 0$.  Our goal is to apply
  Lemma~\ref{lemma:LPcharge}.

  For any $n \geq 0$ and $S \in \bN_m(K')$, one has
  \begin{equation}
    \label{eq:omeganS}
  |\omega_n(S)| \leq \bN((f_{n+1})_\#S - f_{n\#} S)^{1 - \beta}
  \bF((f_{n+1})_\#S - f_{n\#} S)^{\beta}
  \end{equation}
  As in step 3, we use three sorts of estimates. Specifically,
  \begin{align}
    \bN((f_{n+1})_\#S - f_{n\#} S) & \leq 2 \max \{(\rmLip f_n)^m,
    (\rmLip f_n)^{m-1}, (\rmLip f_{n+1})^m, (\rmLip f_{n+1})^{m-1}\}
    \bN(S) \notag \\ &\leq C 2^{n(1 - \gamma)m} (\rmLip^\gamma f)^{m} \bN(S) \label{eq:Hstep3.1}
  \end{align}
  (we used that $\rmLip^\gamma f \geq 1$, so that~\eqref{eq:Hstep3.1}
  holds for all $n \geq 0$) and
  \begin{align}
  \bF((f_{n+1})_\#S - f_{n\#} S) & \leq 2 \max \{(\rmLip f_n)^m,
  (\rmLip f_n)^{m+1}, (\rmLip f_{n+1})^m, (\rmLip f_{n+1})^{m+1}\}
  \bF(S) \notag \\ &\leq C 2^{n(1 - \gamma)(m+1)} (\rmLip^\gamma
  f)^{m+1} \bF(S) \label{eq:Hstep3.2}
  \end{align}
  and by Proposition~\ref{prop:22}(D),
  \begin{align}
    \bF((f_{n+1})_\#S - f_{n\#} S) & \leq \| f_{n+1} - f_n\|_\infty
    \max \{(\rmLip f_n)^m, (\rmLip f_n)^{m-1}, (\rmLip f_{n+1})^m,
    (\rmLip f_{n+1})^{m-1}\} \bN(S) \notag \\ & \leq C 2^{-n\gamma}
    2^{n(1 - \gamma)m} (\rmLip^\gamma f)^{m+1}
    \bN(S) \label{eq:Hstep3.3}
  \end{align}
  We deduce from~\eqref{eq:omeganS}, \eqref{eq:Hstep3.1} and
  \eqref{eq:Hstep3.3} that
  \begin{align*}
  |\omega_n(S)| & \leq C 2^{-n \gamma \beta} 2^{n (1 - \gamma) m} (\rmLip^\gamma f)^{m+\beta}
  \bN(S) \\
  & = C 2^{-n \alpha} (\rmLip^\gamma f)^{m+\beta}
  \bN(S)
  \end{align*}
  Therefore, $\|\omega_n\|_{\bCH^m(K')} \leq C2^{-n\alpha}
  (\rmLip^\gamma f)^{m+\beta}$. In addition, we infer
  from~\eqref{eq:Hstep3.1} and \eqref{eq:Hstep3.2} that
  \begin{align*}
    |\omega_n(S)| & \leq C 2^{n (1 - \gamma)(m+\beta)} (\rmLip^\gamma
    f)^{m+\beta} \bN(S)^{1 - \beta} \bF(S)^\beta \\ & = C 2^{n(\beta -
      \alpha) } (\rmLip^\gamma f)^{m+\beta} \bN(S)^{1 - \beta}
    \bF(S)^\beta,
  \end{align*}
  thereby showing that $\| \omega_n \|_{\bCH^{m, \beta}(K')} \leq C
  2^{n(\beta - \alpha) } (\rmLip^\gamma f)^{m+\beta}$.  By
  Lemma~\ref{lemma:LPcharge}, applied for each $n \geq 1$ to the
  truncated sequence of charges $(\omega_0, \omega_1, \dots,
  \omega_{n-1}, 0, \dots)$, we have $\sup_n \| f_n^\# \omega - f_0^\#
  \omega \|_{\bCH^{m, \alpha}(K')} \leq C (\rmLip^\gamma f)^{m+\beta}
  $.

  Besides, the formula
  \[
  \langle f_n^\# \omega - f_0^\#
  \omega, S \rangle = \langle \omega, f_{n\#} S - f_{0\#} S \rangle
  \]
  that holds for each $S \in \bN_m(K')$, extends by density to all $S
  \in \bF^\beta(K')$ (see Proposition~\ref{prop:basic}(E)(F), which
  makes use of the convexity of $K'$). Applied to $T$ (that belongs to
  $\bF_m^\alpha(K) \subset \bF_m^\beta(K')$), we obtain
  \[
  |\langle \omega, f_{n\#} T - f_{0\#}T \rangle | = |\langle f_n^\#
  \omega - f_0^\# \omega, T \rangle| \leq C (\rmLip^\gamma
  f)^{m+\beta} \bF^\alpha(T).
  \]
  Taking the supremum over all $\omega$ in the closed unit ball of
  $\bCH^{m, \beta}(K')$ results in
  \[
  \bF^\beta( f_{n\#} T - f_{0\#}T ) \leq C (\rmLip^\gamma f)^{m+\beta}
  \bF^\alpha(T)
  \]
  according to Theorem~\ref{thm:duality}.  Moreover,
  \begin{align*}
  \bF^\beta( f_{0\#} T) & \leq \max \{ (\rmLip f_0)^{m-1+\beta}, (\rmLip f)^{m+\beta} \}
  \bF^\beta(T) \\
  & \leq C (\rmLip^\gamma f)^{m+\beta} \bF^\beta(T) \\
  & \leq C (\rmLip^\gamma f)^{m+\beta} \bF^\alpha(T)
  \end{align*}
  by Proposition~\ref{prop:22} and~\eqref{eq:a3}. This proves that
  \[
  \bF^\beta(f_{n \#} T) \leq C (\rmLip^\gamma f)^{m+\beta}
  \bF^\alpha(T)
  \]
  and finally
  \begin{equation}
    \label{eq:finally}
    \bF^\beta(f_{\#} T) \leq C (\rmLip^\gamma f)^{m+\beta}
    \bF^\alpha(T).
  \end{equation}

  Next we treat the case where $0 < \rmLip^\gamma f \leq 1$. We let $r
  := (\rmLip^\gamma f)^{-1}$ and $\varphi_r \colon \R^{d'} \to
  \R^{d'}$ be the map $x \mapsto rx$. It is easy to prove (from the
  definition given in Step 3) that $(\varphi_r \circ f)_\# T =
  \varphi_{r\#} f_\# T$. We then make use of the inequalities
  \[
  \bF^\beta(f_\#T) \leq \frac{1}{r^{m-1+\beta}} \bF^\beta\left(
  (\varphi_r \circ f)_\# T \right)
  \]
  (which comes from Proposition~\ref{prop:basic}(F) applied to
  $\varphi_r^{-1}$) and
  \[
  \bF^\beta\left(
  (\varphi_r \circ f)_\# T \right) \leq C \bF^\alpha(T) 
  \]
  by~\eqref{eq:finally}. Thus
  \[
  \bF^\beta(f_{\#} T) \leq C (\rmLip^\gamma f)^{m-1+\beta}
  \bF^\alpha(T).
  \]

  \textbf{Step 6. Proof of (C).} Let $R \geq 0$ and $f, g \in
  \rmLip^\gamma(K; \R^{d'})$ be two Hölder maps with Hölder constants
  $\max\{ \rmLip^\gamma f, \rmLip^\gamma g\} \leq R$. Extend $f, g$ to
  $\R^d$ such that
  \[
  \max\{ \rmLip^\gamma(f; \R^d), \rmLip^\gamma(g; \R^d)\} \leq CR
  \text{ and } \|f - g\|_\infty = \|f - g\|_{\infty, K}
  \]
  (To force the second condition, one can first find Hölder extensions
  $\hat{f}, \hat{g}$ to $\R^d$ and then, if necessary, compose
  $(\hat{f}, \hat{g})$ with the orthogonal projection onto $\{(x, y)
  \in \R^{2d'} : |x - y| \leq \|f - g\|_{\infty, K}\}$). Our goal is
  to prove~\eqref{eq:goal}, from which (C) is an easy consequence.

  Call $f_n = f
  * \Phi_{2^{-n}}$ and $g_n = g * \Phi_{2^{-n}}$, so that
  \begin{gather}
    \|f_n - g_n\|_\infty \leq \|f - g\|_{\infty, K} \label{eq:b1} \\ \max\{\|f_{n+1}
    - f_n\|_{\infty}, \|g_{n+1} - g_n\|_\infty \} \leq L 2^{-n\gamma}
    \label{eq:b2} \\ \max\{\rmLip f_n, \rmLip g_n \} \leq L 2^{n(1-\gamma)} \label{eq:b3}
  \end{gather}
  with $L = \max\{1, CR\}$.

  Let $(T_k)$ be an $\alpha$-fractional decomposition of $T$. Set, for
  all possible $n$ and $k$,
  \[
  U_{n,k} = (f_{n+1})_\#T_k - (g_{n+1})_\#T_k - (f_{n\#}T_k - g_{n\#}T_k)
  \]
  There are three possible ways of estimating the flat norm of
  $U_{n,k}$. The first one follows from an application of
  Proposition~\ref{prop:22}(D).
  \begin{align*}
    \bF(U_{n,k}) & \leq \bF((f_{n+1})_\#T_k - (g_{n+1})_\#T_k) + \bF(f_{n\#}T_k - g_{n\#}T_k) \\
    & \leq CL^m \| f - g\|_{\infty, K} 2^{n(1-\gamma)m} \bN(T_k) 
  \end{align*}
  A second use of Proposition~\ref{prop:22}(D) yields
  \begin{align*}
    \bF(U_{n,k}) & \leq \bF((f_{n+1})_\#T_k - f_{n\#} T_k) +
    \bF((g_{n+1})_\# T_k - g_{n\#}T_k) \\ & \leq C L^{m+1}
    2^{-n\gamma} 2^{n(1-\gamma)m} \bN(T_k)
  \end{align*}
  and finally
  \begin{align*}
    \bF(U_{n,k}) & \leq \bF((f_{n+1})_\#T_k) + \bF((g_{n+1})_\#T_k) +
    \bF(f_{n\#}T_k) + \bF(g_{n\#}T_k) \\ & \leq C L^{m+1}
    2^{n(1-\gamma)(m+1)} \bF(T_k)
  \end{align*}
  This leads to
  \begin{align*}
    \bF(U_{n,k}) & = \bF(U_{n,k})^{(1-\beta)/2}
    \bF(U_{n,k})^{(1+\beta)/2 - \alpha} \bF(U_{n,k})^{\alpha} \\ &
    \leq C L^{m+(1+\beta)/2} \|f - g\|_{\infty, K}^{(1 - \beta)/2}
    2^{-n \gamma(1 - \beta)/2} \bN(T_k)^{1 - \alpha} \bF(T_k)^\alpha
  \end{align*}
  By summing over $k$, one obtains
  \[
  \bF( (f_{n+1})_\#T - (g_{n+1})_\#T - (f_{n\#}T - g_{n\#}T) )
  \leq C L^{m+(1+\beta)/2} \|f - g\|_{\infty, K}^{(1 - \beta)/2} 2^{-n
    \gamma(1 - \beta)/2} \bF^\alpha(T)
  \]
  Summing over $n$ (and recalling that $f_{n\#}T$ and $g_{n\#}T$
  converge to $f_\#T$ and $g_\#T$ in flat norm by Step 3) yields
  \begin{equation}
    \label{eq:almostthere}
  \bF(f_\# T - g_\# T - (f_{0\#}T - g_{0\#}T)) \leq C
  L^{m+(1+\beta)/2} \|f - g\|_{\infty, K}^{(1 - \beta)/2}
  \bF^\alpha(T).
  \end{equation}
  It remains to estimate $\bF(f_{0\#}T - g_{0\#}T)$. Again, we use
  Proposition~\ref{prop:22}(D) and~\eqref{eq:b1}, \eqref{eq:b3} to
  obtain that
  \[
  \bF(f_{0\#}T_k - g_{0\#}T_k) \leq L^m \|f - g\|_{\infty, K} \bN(T_k). 
  \]
  Also,
  \[
  \bF(f_{0\#}T_k - g_{0\#}T_k) \leq 2 L^{m+1} \bF(T_k).
  \]
  Hence
  \[
  \bF(f_{0\#}T_k - g_{0\#}T_k) \leq C L^{m+\alpha} \| f - g\|_{\infty,
    K}^{1 - \alpha} \bN(T_k)^{1-\alpha} \bF(T_k)^\alpha.
  \]
  By summing over $k$, one obtains
  \[
  \bF(f_{0\#}T - g_{0\#}T) \leq C L^{m+\alpha} \| f - g\|_{\infty,
    K}^{1 - \alpha} \bF^\alpha(T).
  \]
  Together with~\eqref{eq:almostthere}, we deduce that
  \begin{equation}
    \label{eq:goal}
  \bF(f_\# T - g_\# T) \leq C L^{m+(1+\beta)/2} \|f - g\|_{\infty,
    K}^{(1 - \beta)/2} \bF^\alpha(T) + C L^{m+\alpha} \| f -
  g\|_{\infty, K}^{1 - \alpha} \bF^\alpha(T).
  \end{equation}
  This clearly implies (C). Indeed, if $(f_n)$ is a sequence of Hölder
  maps as in (C), one applies~\eqref{eq:goal} to $g = f_n$, hence
  $f_{n\#}T \to f_\#T$ in flat norm.
\end{proof}
The basic properties of the pushforward operator are summarized in the
following proposition.
\begin{prop}
  \label{prop:basicHPF}
  Let $K, d, d', \alpha, \beta, f, \gamma$ as in
  Theorem~\ref{thm:holderpush} and $T \in \bF_m^\alpha(K)$.
  \begin{itemize}
  \item[(A)] $\spt f_\# T \subset f(\spt T)$.
  \item[(B)] $f_\# \partial T = \partial f_\# T$ for $m \geq 1$.
  \item[(C)] Let $L
  \subset \R^{d'}$ be a compact set such that $f(K) \subset L$ and $g
  \colon L \to \R^{d''}$ a $\gamma'$-Hölder continuous map, where $0 <
  \gamma' < 1$. We suppose that $\beta' \in (\beta, 1)$ is such that
  \[
  \frac{m+\alpha}{\gamma \gamma'} = \frac{m+\beta}{\gamma'} = m +
  \beta'.
  \]
  Then $(g \circ f)_\# T = g_\# f_\# T$.
  \item[(D)] The current $f_\#T$ depends only on $T$ and the
    restriction of $f$ to $\spt T$.
\end{itemize}
\end{prop}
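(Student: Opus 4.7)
The common strategy is to pick a sequence of Lipschitz approximants $(f_n)$ (say the convolutions $f_n = \hat f * \Phi_{2^{-n}}$ of a Hölder extension $\hat f$ of $f$ to $\R^d$, as constructed in Step~1 of the proof of Theorem~\ref{thm:holderpush}), which satisfy $\sup_n \rmLip^\gamma f_n < \infty$ and $f_n \to f$ uniformly on $K$. By Theorem~\ref{thm:holderpush}(C), $f_{n\#}T \to f_\#T$ in flat norm, hence weakly, and the Lipschitz identities for $f_{n\#}$ can be passed to the limit.

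For (A), the uniform convergence $f_n \to f$ implies $f_n(\spt T) \subset B(f(\spt T), \varepsilon_n)$ with $\varepsilon_n \to 0$. By Proposition~\ref{prop:22}(C), $\spt f_{n\#}T \subset B(f(\spt T), \varepsilon_n)$. Since weak convergence implies that the support of the limit is contained in any closed set containing the supports of a tail, one obtains $\spt f_\#T \subset \bigcap_{\varepsilon > 0} B(f(\spt T), \varepsilon) = f(\spt T)$. For (B), Proposition~\ref{prop:basic}(B) ensures $\partial T \in \bF^\alpha_{m-1}(K)$, so $f_\# \partial T$ is defined. The Lipschitz identity $f_{n\#} \partial T = \partial f_{n\#}T$ passes to the limit, since $f_{n\#} \partial T \to f_\#\partial T$ by Theorem~\ref{thm:holderpush}(C), and $\partial f_{n\#}T \to \partial f_\#T$ in flat norm because $\bF(\partial(S-S')) \leq \bF(S-S')$ for flat currents.

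For (D), the Lipschitz pushforward $f_{n\#}T$ depends only on the values of $f_n$ on $\spt T$ (see Proposition~\ref{prop:22} and the discussion in Section~\ref{sec:prelim}). So if $\hat f$ and $\hat{\hat f}$ are two Hölder extensions of $f|_{\spt T}$, we can build Lipschitz approximants that agree on $\spt T$ up to an error $\varepsilon_n \to 0$ in uniform norm (by first fixing any Lipschitz extension of $f_n|_{\spt T}$ and invoking uniform continuity of the construction); applying Theorem~\ref{thm:holderpush}(C) the two resulting limit currents coincide.

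The main difficulty is (C), where two Hölder maps must be approximated simultaneously. My plan is a two-step limit. Fix Lipschitz approximants $g_n \to g$ satisfying Theorem~\ref{thm:holderpush}(C), and for each $n$ Lipschitz approximants $f_k \to f$. For fixed $n$, all of $g_n$, $f_k$, and $g_n \circ f_k$ are Lipschitz, so the classical identity $g_{n\#} f_{k\#} T = (g_n \circ f_k)_\# T$ holds. Let $k \to \infty$: the left-hand side tends to $g_{n\#} f_\# T$, because $f_{k\#}T \to f_\# T$ in flat norm, $f_{k\#}T$ is bounded in $\bF^\beta$, so using the proposition at the end of Section~\ref{sec:compact} we have $\bF^{\beta'}(f_{k\#}T - f_\#T) \to 0$, and then $g_{n\#}$ is continuous $\bF^{\beta'} \to \bF^{\beta'}$ by Proposition~\ref{prop:basic}(F) since $g_n$ is Lipschitz (on a suitable compact set). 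The right-hand side tends to $(g_n \circ f)_\# T$ by applying Theorem~\ref{thm:holderpush}(C) to the $\gamma$-Hölder maps $g_n \circ f_k \to g_n \circ f$ (their $\gamma$-Hölder constants are bounded by $\rmLip g_n \cdot \sup_k \rmLip^\gamma f_k$, with $n$ fixed). This establishes $g_{n\#} f_\# T = (g_n \circ f)_\# T$ for every $n$. Finally let $n \to \infty$: the left-hand side tends to $g_\# f_\# T$ by Theorem~\ref{thm:holderpush}(C) applied to $g_n \to g$ and the current $f_\# T \in \bF^\beta_m$; the right-hand side tends to $(g \circ f)_\# T$ by Theorem~\ref{thm:holderpush}(C) applied to the $\gamma\gamma'$-Hölder maps $g_n \circ f \to g \circ f$, whose $\gamma\gamma'$-Hölder constants are bounded by $\rmLip^{\gamma'}(g_n)(\rmLip^\gamma f)^{\gamma'}$, a uniformly bounded quantity. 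This gives the composition formula under the stated constraint $\frac{m+\alpha}{\gamma\gamma'} = \frac{m+\beta}{\gamma'} = m + \beta'$ which guarantees all the intermediate pushforwards land in the correct fractional spaces.
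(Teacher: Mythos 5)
Your (A) and (B) follow the same route as the paper: approximate $f$ by Lipschitz maps with equi-bounded Hölder constants, pass the Lipschitz identities to the flat-norm limit supplied by Theorem~\ref{thm:holderpush}(C). One small omission in (B): to invoke the Hölder pushforward of $\partial T \in \bF^\alpha_{m-1}(K)$ you must check that the exponent constraint of Theorem~\ref{thm:holderpush} holds in dimension $m-1$, i.e., that $(m-1+\alpha)/\gamma < m$. This does follow since $(m-1+\alpha)/\gamma = m+\beta - 1/\gamma < m$ because $\gamma < 1 > \beta$, but it is worth stating (the paper does).

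Your (C) is correct and uses a nested-limit argument that mirrors the paper's structure with the roles of $f$ and $g$ swapped: the paper first proves the special case where $f$ is Lipschitz and $g$ Hölder (approximating $g$), then obtains the general case by approximating $f$ in the outer limit; you instead fix Lipschitz approximants $g_n$ of $g$, approximate $f$ in the inner limit to get $g_{n\#}f_\#T = (g_n\circ f)_\#T$, and then approximate $g$ in the outer limit. Both are valid. Your use of the end-of-Section~\ref{sec:compact} proposition to upgrade flat-norm convergence of $f_{k\#}T$ to $\bF^{\beta'}$-convergence is more than you need (flat-norm continuity of the Lipschitz pushforward $g_{n\#}$ already suffices), but it is not wrong. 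A point worth making explicit: in the inner limit, $(g_n\circ f)_\#T$ is first constructed as a $\gamma$-Hölder pushforward, while in the outer limit you view the maps $g_n\circ f$ as $\gamma\gamma'$-Hölder; one should observe that the two constructions of the pushforward (for the two exponents) agree, which follows from uniqueness in Theorem~\ref{thm:holderpush}.

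Your (D) is the part that needs repair. The parenthetical ``by first fixing any Lipschitz extension of $f_n|_{\spt T}$ and invoking uniform continuity of the construction'' does not parse, and the invocation of Theorem~\ref{thm:holderpush}(C) at the end does not quite apply as stated: the Lipschitz approximants of two extensions $\hat f$, $\hat{\hat f}$ agreeing on $\spt T$ converge pointwise to a common limit only on $\spt T$, not on the ambient compact $K$, whereas Theorem~\ref{thm:holderpush}(C) requires pointwise convergence on $K$. The fix is to apply Theorem~\ref{thm:holderpush} with $K$ replaced by $\spt T$ (observing that this gives the same pushforward, since for Lipschitz $f_n$ one has $f_{n\#}T = (f_n|_{\spt T})_\#T$), but this is precisely the consistency you are trying to prove and should be said explicitly. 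The paper avoids this by a cleaner route: it derives (D) from the Lipschitz special case of (C), applied to $f_i \circ \iota$ where $\iota$ is the inclusion $\spt T \hookrightarrow \R^d$; since $f_1\circ\iota = f_2\circ\iota$ one immediately gets $f_{1\#}T = f_{2\#}T$. You might adopt that argument, or at least be precise about shrinking the ambient compact set.
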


\begin{proof}
  (A). Let $(f_n)$ be a sequence of Lipschitz maps $K \to \R^{d'}$
  that converge pointwise to $f$ with equi-bounded Hölder
  constants. Then, it is clear that, for all $n_0$ and $n \geq n_0$,
  one has
  \[
  \spt f_{n\#} T \subset \bigcup_{n \geq n_0} B\left(f(\spt T), \|f -
  f_n\|_{\infty, K}\right).
  \]
  As $f_{n\#} T$ converges weakly to $f_\#T$ by
  Theorem~\ref{thm:holderpush}(C), we deduce that
  \[
  \spt f_\#T \subset \bigcap_{n_0 \geq 0} \bigcup_{n \geq n_0}
  B\left(f(\spt T), \|f - f_n\|_{\infty, K}\right) = f(\spt T).
  \]

  (B). First we observe that the left-hand side $f_\# \partial T$
  makes sense, as $\partial T$ is an $\alpha$-fractional
  $(m-1)$-current and
  \[
  \frac{m-1 + \alpha}{\gamma} = m + \beta - \frac{1}{\gamma} < m + 1 - 1 = m.
  \]
  The equality $f_\# \partial T = \partial f_\# T$ is obviously true
  if $f$ is Lipschitz continuous, and the Hölder case is obtained by a
  density argument.

  We next prove (C) in the special (unmentioned) case $\gamma = 1$ and
  $\beta = \alpha$ where $f$ is Lipschitz continuous. Let $(g_n)$ be a
  sequence in $\rmLip^{\gamma'}(L; \R^{d''})$ that converges pointwise
  to $g$ with equi-bounded Hölder constants. Then $(g_n \circ f)$ is a
  sequence in $\rmLip^{\gamma'}(K; \R^{d''})$ that converges pointwise
  to $g \circ f$ with equi-bounded Hölder constants. Clearly, $(g_n
  \circ f)_\# T = g_{n\#} f_\# T$ for all $n$, and we obtain our first
  result by passing to the weak limit.

  (D). Let $f_1, f_2$ be two Hölder functions that agree on $\spt
  T$. Then $f_1 \circ \iota = f_2 \circ \iota$, where $\iota \colon
  \spt K \to \R^d$ is the injection map. By~(C), one has $f_{1\#} T =
  f_{2\#} T$.

  The general case of (C) is treated by considering a sequence $(f_n)$
  in $\rmLip^\gamma(K; \R^d)$ that converges pointwise to $f$ with
  equi-bounded Hölder constant, extending the map $g$ to $\R^{d'}$ in a
  Hölder manner, and then taking the limit in $(g \circ f_n)_\# T =
  g_\# f_{n\#} T$.
\end{proof}

\subsection{Top dimensional case}
Let us first recall the cone construction. If $K$ is a compact convex
subset of $\R^d$, $a \in K$ and $T \in \bF_m(K)$ is a flat
$m$-current, then $a \cone T$ is the flat $(m+1)$-current
\begin{equation}
  \label{eq:coneConstr}
a \cone T = h_\# \left( \llbracket 0, 1 \rrbracket \times T \right)
\end{equation}
where $h(t, x) = (1-t)a + tx$.  Due to the convexity of $K$, the cone
remains supported in $K$. Moreover, the formula~\eqref{eq:coneConstr}
makes it clear that $T \mapsto a \cone T$ is a continuous linear map
$\bF_m(K) \to \bF_{m+1}(K)$. For further details,
see~\cite{Fede}[4.1.11], which establishes that
\begin{equation}
  \label{eq:coneMass}
\bM(a \cone T) \leq C (\operatorname{diam} K) \bM(T), \text{ and }
\partial (a \cone T) = T - a \cone \partial T,
\end{equation}
where $C = C(d)$ is a constant, and
\begin{equation}
  \label{eq:coneBord}
  \partial (a \cone T) = \begin{cases}
    T - a \cone \partial T & \text{if } m \geq 1 \\
    T & \text{if } m = 0
  \end{cases}
\end{equation}
From~\eqref{eq:coneMass} and~\eqref{eq:coneBord}, one easily infers
that
\[
\bF(a \cone T) \leq C(1 + \operatorname{diam} K) \bF(T), \qquad
\bN(a \cone T) \leq C(1 + \operatorname{diam} K) \bN(T).
\]
Now, if $T$ is $\alpha$-fractional, then so is $a \cone T$. Indeed,
one easily shows, using the preceding estimates, that any
$\alpha$-fractional decomposition $(T_k)$ of $T$ induces an
$\alpha$-fractional decomposition $(a \cone T_k)$ of $a \cone
T$. Moreover,
\[
\bF^\alpha(a \cone T) \leq C(1 + \operatorname{diam} K) \bF^\alpha(T).
\]
This construction enables to extend the pushforward defined in
Subsection~\ref{subsec:mainH} in the following top-dimensional
case. We let $d \geq d' \geq 1$ and $m = d'$ ; $K \subset \R^d$ is a
compact subset and $0 \leq \alpha < \beta < 1$ and $0 < \gamma < 1$
are such that
\[
\frac{d'-1 + \alpha}{\gamma} = d' - 1 + \beta.
\]
The pushforward $f_\# T$ is defined to be the unique $d'$-current such
that $\partial( f_\# T) = f_\# \partial T$. The uniqueness of $f_\# T$
is ensured by the constancy theorem, whereas its existence, and the
fact that $f_\# T \in \bF_m^\beta(\R^{d'})$ is guaranteed by the cone
construction above. We let the reader prove that properties (A), (C)
of Theorem~\ref{thm:holderpush} and (A), (B), (C), (D) of
Proposition~\ref{prop:basicHPF} still hold in this case.

In the even more special case where $d=d' = m$, this is related to the
Brouwer degree.

\begin{prop}
  \label{prop:83}
  Let $U \subset \R^d$ be a bounded set. Suppose that $U$ has finite
  $(1-\alpha)$-perimeter (where $0 \leq \alpha < 1$) and $f \colon
  \operatorname{cl} U \to \R^d$ be a $\gamma$-Hölder map with $0 <
  \gamma < 1$ and
  \begin{equation}
    \label{eq:condB}
    \frac{d-1+\alpha}{\gamma} = d - 1 + \beta < d.
  \end{equation}
  Then the density function of the flat current $f_\# \llbracket
  U \rrbracket$ agrees with $\operatorname{deg}(f, U, \cdot)$ almost
  everywhere on $\R^d \setminus f(\partial U)$. In particular, if
  $f(\partial U)$ is Lebesgue negligible, then $\operatorname{deg}(f,
  U, \cdot)$ belongs to $W^{1-\beta,1}_c(\R^d)$.
\end{prop}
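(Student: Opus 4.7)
I plan to approximate $f$ by smooth maps and identify the density through the classical link with the Brouwer degree in the smooth case. First, by Theorem~\ref{thm:topdim} the hypothesis $\ind_U \in W^{1-\alpha,1}_c(\R^d)$ places $\llbracket U \rrbracket$ in $\bF_d^\alpha(\R^d)$. The top-dimensional pushforward construction described after Proposition~\ref{prop:basicHPF} then puts $f_\# \llbracket U \rrbracket$ in $\bF_d^\beta(\R^d)$, so a further application of Theorem~\ref{thm:topdim} produces $u \in W^{1-\beta,1}_c(\R^d)$ with $f_\# \llbracket U \rrbracket = \llbracket u \rrbracket$. The task reduces to identifying $u$ with $\operatorname{deg}(f, U, \cdot)$ almost everywhere on $\R^d \setminus f(\partial U)$.

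I would extend $f$ to a $\gamma$-Hölder map $\tilde f \colon \R^d \to \R^d$ preserving the Hölder constant up to a dimensional factor, and set $f_n \defeq \tilde f * \Phi_{2^{-n}}$. These smooth maps satisfy $f_n \to f$ uniformly on $\operatorname{cl} U$ with $\sup_n \rmLip^\gamma f_n < \infty$, so the top-dimensional variant of Theorem~\ref{thm:holderpush}(C) gives $f_{n\#}\llbracket U \rrbracket \to f_\# \llbracket U \rrbracket$ in flat norm, hence weakly. For each smooth $f_n$, the classical area-formula computation (see~\cite[4.1.25]{Fede}) yields $f_{n\#} \llbracket U \rrbracket = \llbracket \operatorname{deg}(f_n, U, \cdot) \rrbracket$, where on the left the top-dimensional pushforward of the previous subsection coincides with the standard Lipschitz pushforward by the uniqueness (via the constancy theorem) built into the construction.

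Now fix $y_0 \in \R^d \setminus f(\partial U)$ and choose $r > 0$ with $B(y_0, 2r) \cap f(\partial U) = \emptyset$. Since $f_n \to f$ uniformly on $\partial U$, one has $B(y_0, r) \cap f_n(\partial U) = \emptyset$ for $n$ large. The continuity of the Brouwer degree under uniform convergence, combined with its local constancy in $y$ on the complement of $f_n(\partial U)$, gives $\operatorname{deg}(f_n, U, y) = \operatorname{deg}(f, U, y_0)$ for every $y \in B(y_0, r)$ and $n$ large. Testing the weak convergence $\llbracket \operatorname{deg}(f_n, U, \cdot) \rrbracket \to \llbracket u \rrbracket$ against arbitrary smooth top-forms $\phi\,\diff x_1 \wedge \cdots \wedge \diff x_d$ supported in $B(y_0, r)$ forces $u = \operatorname{deg}(f, U, y_0)$ almost everywhere on $B(y_0, r)$. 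A covering of $\R^d \setminus f(\partial U)$ by such balls yields the density identity. The final assertion follows at once: if $f(\partial U)$ is Lebesgue negligible, then $u$ and $\operatorname{deg}(f, U, \cdot)$ agree almost everywhere on $\R^d$, hence $\operatorname{deg}(f, U, \cdot) \in W^{1-\beta, 1}_c(\R^d)$.

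The main obstacle is ensuring that the top-dimensional pushforward, defined via the cone construction so as to satisfy $\partial f_{n\#}T = f_{n\#}\partial T$, really agrees with the usual Lipschitz pushforward when $f_n$ is Lipschitz --- without this the identification $f_{n\#}\llbracket U \rrbracket = \llbracket \operatorname{deg}(f_n, U, \cdot) \rrbracket$ could not be invoked. This agreement follows immediately from the uniqueness afforded by the constancy theorem, but should be made explicit. The remaining ingredients (continuity of the Brouwer degree under uniform convergence, its local constancy in $y$, and its representation as the density of the Lipschitz pushforward) are entirely classical and imported from~\cite{Fede}.
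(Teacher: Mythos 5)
Your proof is correct and follows essentially the same route as the paper's: identify $\llbracket U\rrbracket\in\bF^\alpha_d$, push it forward via the top-dimensional construction to get a density $u\in W^{1-\beta,1}_c$, mollify $f$ to smooth $f_n$, use the classical identity between the pushforward density and the Brouwer degree in the smooth case, and pass to the limit using the stability of the degree together with the flat-norm (equivalently $L^1$) convergence guaranteed by Theorem~\ref{thm:holderpush}(C). The only cosmetic differences are in the limiting argument: the paper compares $\operatorname{deg}(f,U,\cdot)$ with $\operatorname{deg}(f_n,U,\cdot)$ on the complement of a shrinking tubular neighborhood of $f(\partial U)$ via the explicit linear homotopy $H_n(t,x)=(1-t)f(x)+tf_n(x)$ and then lets $n\to\infty$ in $L^1$, whereas you localize to small balls $B(y_0,r)$ away from $f(\partial U)$, use local constancy plus degree-stability under uniform perturbation to pin down the limiting density there, and test weak convergence against forms supported in the ball; these are equivalent. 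One slight imprecision worth flagging: the global identity $f_{n\#}\llbracket U\rrbracket=\llbracket\operatorname{deg}(f_n,U,\cdot)\rrbracket$ is only unambiguous off $f_n(\partial U)$ (which need not be Lebesgue negligible when $\partial U$ has positive measure); your argument only ever uses it on $B(y_0,r)$, which is disjoint from $f_n(\partial U)$ for $n$ large, so the conclusion stands, but the statement should be localized.
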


\begin{proof}
  This proof is almost identical to the one
  of~\cite[Lemma~5.6]{Zust}. If $\varphi$ is the restriction of a
  smooth map $\R^d \to \R^d$, then the density function $v$ of
  $f_\# T$ is given by
  \[
  v(y) = \sum_{x \in f^{-1}(x) \cap U} \operatorname{sign} \det (D
  f_x))
  \]
  for almost all $y \in \R^d$, by~\cite[4.1.26]{Fede}. This agrees
  with $\operatorname{deg}(f, U, y)$ for almost all $y \in \R^d
  \setminus f(\partial U)$ by Sard's theorem.

  For the general case, we extend $f$ to $\R^d$ in a Hölder manner and
  consider the sequence $f_n = f * \Phi_{1/n}$, that is smooth and
  converges pointwise to $f$, with equi-bounded Hölder constants. Then
  $\partial f_{n\#} \llbracket U \rrbracket = f_{n \#} \partial
  \llbracket U \rrbracket \to f_\# \partial \llbracket U \rrbracket$
  in flat norm by Theorem~\ref{thm:holderpush}(C). As the map $a \cone
  \cdot$ is $\bF$-continuous, the sequence $(f_{n\#} \llbracket U
  \rrbracket)$ converges to $f_\# \llbracket U \rrbracket$ in flat
  norm. Let us denote by $v_n$ (resp. $v$) the density function of
  $f_{n\#} \llbracket U \rrbracket$ (resp. $f_{\#} \llbracket U
  \rrbracket$). Then $v_n \to v$ in $L^1$.

  Let $H_n \colon [0, 1] \times \operatorname{cl} U \to \R^d$ the
  homotopy $H(t, x) = (1-t)f(x) + t f_n(x)$ between $f$ and
  $f_n$. From the homotopy invariance of the Brouwer degree
  (see~\cite[Chapter~IV, Proposition~2.6]{OuteRuiz}), we have
  \[
  \operatorname{deg} (f, U, y) = \operatorname{deg} (f_n, U, y)
  \]
  for all $y \in \R^d$ such that $y \not\in H(t, \partial U)$ for all
  $t \in [0, 1]$. In particular, $\operatorname{deg}(f, U, \cdot)$ and
  $\operatorname{deg}(f_n, U, \cdot)$ agree on $\R^d \setminus B(
  \partial U, \| f - f_n\|_{\infty, \partial U})$, and that
  \[
  \operatorname{deg} (f, U, \cdot) \ind_{\R^d \setminus B( \partial U,
    \| f - f_n\|_{\infty, \partial U})} = v_n \ind_{\R^d \setminus B(
    \partial U, \| f - f_n\|_{\infty, \partial U})}
  \]
  By letting $n \to \infty$, we establish that $\operatorname{deg}(f,
  U, \cdot) = v$ almost everywhere on $\R^d \setminus f(\partial U)$.
\end{proof}
The paper~\cite{Klin} suggests that the condition $\calL^d(f(\partial
U)) = 0$, that appears in Proposition~\ref{prop:83} should follow
automatically from the assumptions. We were however not able to find
an elementary proof.

We point the reader towards \cite[Theorem~1.1]{DiPiLippSporVald} for a
proof of the fractional Sobolev embeddings
\[
W_c^{1- \beta, 1}(\R^d) \subset W_c^{s, d/(d-s+1-\beta)}(\R^d) \subset
L^{d/(d+1- \beta)}_c(\R^d) \text{ for all } s \in (0, 1 - \beta).
\]
We also recall from Subsection~\ref{subsec:firstexample} for
$\llbracket U \rrbracket$ is $\alpha$-fractional for all $\alpha >
\dim_{\mathrm{box}}(\partial U) - (d - 1)$. Those facts imply that the
results of De Lellis and Inauen~\cite{DeLeInau} mentioned in the
introduction are a particular case of our Proposition~\ref{prop:83}.

Higher integrability of the Brouwer degree was previously established
by Olbermann in~\cite{Olbe}, who showed that
\[
\operatorname{deg}(f, U, \cdot) \in L^p_c(\R^d) \quad \text{for all }
1 \leq p < \frac{d}{d-1+\beta},
\] 
provided that \( U \) is a bounded open set with
\(\dim_{\mathrm{box}}(\partial U) = d-1 + \alpha\), where \( f \) is
\(\gamma\)-Hölder and satisfies~\eqref{eq:condB}.  Our results offer a
slight improvement, as we extend this integrability to the critical
case when the condition on the upper box dimension of \( \partial U \)
is replaced by the finiteness of the fractional perimeter of \( U \).

Additionally, we mention another higher integrability result due to
Züst, not covered by Proposition~\ref{prop:83}, which addresses the
case of a map \( f = (f_1, \dots, f_d) \), where the components are
Hölder continuous with potentially different Hölder exponents.

\section{Wedge product of fractional charges}
\label{sec:ext}

\begin{thm}
  \label{thm:main}
  Let $\alpha, \beta$ be parameters such that $0 < \alpha, \beta \leq
  1$ and $\alpha + \beta > 1$. There is a unique map
  \[
  \wedge \colon \bCH^{m, \alpha}(\R^d) \times \bCH^{m', \beta}(\R^d)
  \to \bCH^{m+m', \alpha + \beta - 1}(\R^d)
  \]
  such that
  \begin{enumerate}
  \item[(A)] $\wedge$ extends the pointwise wedge product between smooth forms;
  \item[(B)] Weak*-to-weak* continuity: if $(\omega_n)$ and $(\eta_n)$
    are two sequences that converge weakly-* to $\omega$ and $\eta$ in
    $\bCH^{m, \alpha}(\R^d)$ and $\bCH^{m', \beta}(\R^d)$
    respectively, then $\omega_n \wedge \eta_n$ converge weakly-* to
    $\omega \wedge \eta$ in $\bCH^{m+m', \alpha + \beta - 1}(\R^d)$.
  \end{enumerate}
  Moreover, $\wedge$ is continuous.
\end{thm}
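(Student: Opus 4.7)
The plan is to construct $\omega \wedge \eta$ as a Littlewood--Paley series of smooth wedge products and then read off the stated properties via Corollary~\ref{cor:LP}. Uniqueness is immediate: Propositions~\ref{prop:smooth} and~\ref{prop:estimatesSmoothing}(A)(C) show that the smoothings $\omega * \Phi_\varepsilon$ are smooth forms forming a bounded family in $\bCH^{m, \alpha}(\R^d)$ that converges to $\omega$ in $\bCH^m(\R^d)$, hence weakly-*; likewise for $\eta$. Condition (B) then forces any extension of the smooth wedge product (fixed by (A)) to be the weak-* limit of the classical wedge products $(\omega * \Phi_\varepsilon) \wedge (\eta * \Phi_\varepsilon)$.

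For existence, set $\omega_n := \omega * \Phi_{2^{-n}}$, $\eta_n := \eta * \Phi_{2^{-n}}$, and define $d_0 := \omega_0 \wedge \eta_0$ and $d_n := (\omega_n - \omega_{n-1}) \wedge \eta_n + \omega_{n-1} \wedge (\eta_n - \eta_{n-1})$ for $n \geq 1$, so that the partial sums telescope: $\sum_{k=0}^n d_k = \omega_n \wedge \eta_n$. Writing $\kappa = \kappa(K) := \|\omega\|_{\bCH^{m,\alpha}, B(K,1)} \|\eta\|_{\bCH^{m',\beta}, B(K,1)}$ for a compact $K \subset \R^d$, the technical heart is to establish the twin Littlewood--Paley estimates
\[
\|d_n\|_{\bCH^{m+m', 1}, K} \leq C \kappa \cdot 2^{n(2-\alpha-\beta)}, \qquad \|d_n\|_{\bCH^{m+m'}, K} \leq C \kappa \cdot 2^{-n(\alpha+\beta-1)}.
\]
Since $\alpha+\beta-1 \in (0, 1]$, Corollary~\ref{cor:LP} (with the trivial case $\alpha=\beta=1$ handled by Whitney's classical wedge of flat cochains) then yields a weak-* convergent sum $\omega \wedge \eta := \sum_n d_n \in \bCH^{m+m', \alpha+\beta-1}(\R^d)$, together with the continuity estimate $\|\omega \wedge \eta\|_{\bCH^{m+m', \alpha+\beta-1}, K} \leq C \kappa$.

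The first Littlewood--Paley estimate is a direct application of Corollary~\ref{cor:411} combined with the smoothing bound $\|\omega_n\|_{\bCH^{m,1}, K} \leq C 2^{n(1-\alpha)}\|\omega\|_{\bCH^{m,\alpha}, B(K, 2^{-n})}$ of Proposition~\ref{prop:estimatesSmoothing}(B). The second, which is the crux of the argument, uses the adjoint identity $T(\phi \wedge \psi) = \phi(T \hel \psi)$: for $T \in \bN_{m+m'}(K)$,
\[
|((\omega_n - \omega_{n-1}) \wedge \eta_n)(T)| \leq \|\omega_n - \omega_{n-1}\|_{\bCH^m, K} \cdot \bN(T \hel \eta_n),
\]
where the first factor is $\leq C 2^{-n\alpha}\|\omega\|_{\bCH^{m,\alpha}, B(K,1)}$ by Proposition~\ref{prop:estimatesSmoothing}(C), and expanding $\partial(T \hel \eta_n)$ via Leibniz together with Proposition~\ref{prop:CH1smooth} gives $\bN(T \hel \eta_n) \leq C \max(\|\eta_n\|_\infty, \|\diff \eta_n\|_\infty)\bN(T) \leq C 2^{n(1-\beta)}\|\eta\|_{\bCH^{m',\beta}, B(K,1)}\bN(T)$. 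The product yields the $2^{-n(\alpha+\beta-1)}$ decay; the symmetric term is handled identically by swapping the roles of $\omega$ and $\eta$ via $T(\phi \wedge \psi) = \psi(T \hel \phi)$.

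Finally, (A) is immediate: when $\omega$ and $\eta$ are themselves smooth the partial sums $\omega_n \wedge \eta_n$ converge uniformly on compacts to the pointwise wedge, which is therefore what the series computes. For (B), observe that $(\omega \wedge \eta)(T) = \sum_n d_n(T)$ is an absolutely convergent series whose $n$th term, at fixed scale, depends continuously on $(\omega, \eta)$ in the weak-* topology (via Proposition~\ref{prop:smooth} and dominated convergence applied to the explicit pointwise formula for $\omega * \Phi_{2^{-n}}$), while the tail is controlled uniformly in $(\omega, \eta)$ by the Littlewood--Paley estimate; interchanging $\lim_\nu$ and $\sum_n$ gives weak-*-to-weak-* continuity. \textbf{The main obstacle} is precisely this interchange: it hinges on the fact that $\kappa(K)$ depends on $\omega$ and $\eta$ through fractional-charge norms that are weak-* lower semicontinuous, so that bounded weak-* convergent sequences yield a uniformly controlled Littlewood--Paley tail—without which the wedge product would only be strongly continuous.
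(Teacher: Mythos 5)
Your proposal follows essentially the same Littlewood--Paley scheme as the paper: mollify with $\Phi_{2^{-n}}$, telescope, estimate the increments twice (once via Corollary~\ref{cor:411} and Proposition~\ref{prop:estimatesSmoothing}(B) for the $\bCH^{m+m',1}$ seminorm, once by factoring one factor out as a charge and bounding $\bN(T \hel \cdot)$ through Leibniz and Proposition~\ref{prop:CH1smooth}), apply Corollary~\ref{cor:LP}, and verify (A) by local uniform convergence and (B) by dominated convergence via Proposition~\ref{prop:smooth}. The reindexing ($d_n$ using $n-1$ rather than $n+1$) and the choice of which factor to pull out are cosmetic.

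There is one genuine gap: the endpoint case $\alpha = \beta = 1$, i.e.\ $\alpha+\beta-1 = 1$. Corollary~\ref{cor:LP} carries the hypothesis $0 < \alpha < 1$ and so does not apply here, and your parenthetical deferral to ``Whitney's classical wedge of flat cochains'' does not close the gap: it names a candidate map but neither shows it agrees with the weak-* limit of the mollified wedges (which is required by your own uniqueness argument), nor verifies property (B), which is the substance of the statement. The argument would fail as written at this endpoint. The paper treats this subcase separately and without cor:LP: the uniform bound $\|\omega_n \wedge \eta_n\|_{\bCH^{m+m',1},K} \leq C\|\omega\|_{\bCH^{m,1},B(K,1)}\|\eta\|_{\bCH^{m',1},B(K,1)}$ comes directly from Corollary~\ref{cor:411} and Proposition~\ref{prop:estimatesSmoothing}(A), while the increment estimate (which still delivers a summable $2^{-n}$ rate when $\alpha=\beta=1$) gives weak convergence of $(\omega_n \wedge \eta_n)(T)$ for each fixed $T$; boundedness plus weak convergence is precisely weak-* convergence, and (A), (B) then follow as in the generic case. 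You should replace the Whitney citation with this short separate argument.
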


\begin{proof}
  We proved in Proposition~\ref{prop:estimatesSmoothing} that $\omega
  * \Phi_\varepsilon \to \omega$ weakly-* in $\bCH^{m, \alpha}(\R^d)$,
  and similarly, $\eta * \Phi_\varepsilon \to \eta$ weakly-* in
  $\bCH^{m', \beta}(\R^d)$. Taking into account that $\omega *
  \Phi_\varepsilon$ and $\eta * \Phi_\varepsilon$ are smooth by
  Proposition~\ref{prop:smooth}, the uniqueness of the map $\wedge$
  follows.

  We now address the issue of existence. First we treat the case
  $(\alpha, \beta) \neq (1, 1)$. Abbreviate $\omega_n = \omega *
  \Phi_{2^{-n}}$ and $\eta_n = \eta * \Phi_{2^{-n}}$. We shall prove
  that the weak* limit of $(\omega_n \wedge \eta_n)$ exists in
  $\bCH^{m+m', \alpha + \beta - 1}(\R^d)$. This will be achieved if we
  manage to prove that the following series converges weakly-*
  \begin{equation}
    \label{eq:seriesW*}
    \omega_0 \wedge \eta_0 + \sum_{n=0}^\infty \left( \omega_{n+1}
    \wedge (\eta_{n+1} - \eta_n) + (\omega_{n+1} - \omega_n) \wedge
    \eta_n \right)
  \end{equation}
  in $\bCH^{m+m', \alpha + \beta - 1}(\R^d)$. Let us note in passing
  that both sums $\sum_{n=0}^\infty \omega_{n+1} \wedge (\eta_{n+1} -
  \eta_n)$ and $\sum_{n=0}^\infty (\omega_{n+1} - \omega_n) \wedge
  \eta_n$ can be interpreted as paraproducts.  The weak* convergence
  will follow from an application of Corollary~\ref{cor:LP}. To this
  end, we let $K$ be a closed (non degenerate) ball and estimate
  \begin{align*}
  \| \eta_{n+1} - \eta_n \|_{\bCH^{m'},K} & \leq \| \eta_{n+1} - \eta
  \|_{\bCH^{m'},K} + \| \eta_n - \eta \|_{\bCH^{m'},K} \\ & \leq C 2^{-n\beta}
  \| \eta \|_{\bCH^{m', \beta}, B(K, 1)}
  \end{align*}
  by Proposition~\ref{prop:estimatesSmoothing} and similarly
  \[
  \| \omega_{n+1} - \omega_n \|_{\bCH^m, K} \leq C 2^{-n\alpha}
  \|\omega\|_{\bCH^{m, \alpha}, B(K, 1)}.
  \]
  Next we wish to control the $\bCH^{m+m'}(K)$ seminorm of the
  wedge product $\omega_{n+1} \wedge (\eta_{n+1} - \eta_n)$. Let $T
  \in \bN_{m+m'}(K)$. We recall that
  \[
  \partial (T \hel \omega_{n+1}) = (-1)^m (\partial T) \hel
  \omega_{n+1} + (-1)^{m+1} T \hel \diff \omega_{n+1}.
  \]
  Hence
  \begin{align*}
    \bN(T \hel \omega_{n+1}) & \leq C \bN(T) \max \{
    \|\omega_{n+1}\|_{\infty, K}, \| \diff \omega_{n+1} \|_{\infty, K}
    \} \\ & \leq C \bN(T) \| \omega_{n+1} \|_{\bCH^{m, 1}, K} &
    \text{by Proposition~\ref{prop:CH1smooth}} \\ & \leq C \bN(T) \|
    \omega \|_{\bCH^{m, \alpha}, B(K, 1)} 2^{n(1 - \alpha)} & \text{by
      Proposition~\ref{prop:estimatesSmoothing}(B)}
  \end{align*}
  We deduce that
  \begin{align*}
    \left|\omega_{n+1} \wedge (\eta_{n+1} - \eta_n) (T)\right| & =
    \left| (\eta_{n+1} - \eta_n)(T \hel \omega_{n+1}) \right| \\ &
    \leq \| \eta_{n+1} - \eta_n \|_{\bCH^{m'}, K} \bN(T \hel
    \omega_{n+1}) \\ & \leq C \|\omega\|_{\bCH^{m,\alpha}, B(K, 1)} \|
    \eta \|_{\bCH^{m',\beta}, B(K,1)} 2^{n(1 - \alpha - \beta)} \bN(T).
  \end{align*}
  As a result,
  \[
  \|\omega_{n+1} \wedge (\eta_{n+1} - \eta_n) \|_{\bCH^m, K} \leq C
  \|\omega\|_{\bCH^{m,\alpha}, B(K,1)} \| \eta \|_{\bCH^{m', \beta}, B(K,1)}
  2^{n(1 - \alpha - \beta)}.
  \]
  Next we estimate the $\| \cdot \|_{\bCH^{m,1},K}$ seminorm of $\omega_{n+1}
  \wedge (\eta_{n+1} - \eta_n)$. By
  Proposition~\ref{prop:estimatesSmoothing}(B),
  \[
  \|\omega_{n+1}\|_{\bCH^{m,1}, K} \leq C 2^{n(1 - \alpha)} \| \omega
  \|_{\bCH^{m, \alpha}, B(K,1)}
  \]
  and
  \begin{align*}
  \| \eta_{n+1} - \eta_n \|_{\bCH^{m', 1}, K} & \leq
  \|\eta_{n+1}\|_{\bCH^{m', 1}, B(K,1)} + \|\eta_n \|_{\bCH^{m', 1},K}
  \\ & \leq C 2^{n(1 - \beta)} \| \eta \|_{\bCH^{m', \beta}, B(K,1)}
  \end{align*}
  By Corollary~\ref{cor:411},
  \[
  \|\omega_{n+1} \wedge (\eta_{n+1} - \eta_n) \|_{\bCH^{m,1},K} \leq
  C \| \omega \|_{\bCH^{m,\alpha}, B(K,1)} \|\eta\|_{\bCH^{m',
      \beta}, B(K, 1)} 2^{n(1 - (\alpha + \beta - 1))}.
  \]
  Similarly, we have
  \begin{gather*}
    \| (\omega_{n+1} - \omega_n) \wedge \eta_n \|_{\bCH^{m+m'},K}
    \leq C \|\omega\|_{\bCH^{m,\alpha},B(K,1)} \| \eta
    \|_{\bCH^{m',\beta}, B(K,1)} 2^{n(1 - \alpha - \beta)} \\ \| (\omega_{n+1}
    - \omega_n) \wedge \eta_n \|_{\bCH^{m+m', 1},K} \leq C \| \omega
    \|_{\bCH^{m,\alpha}, B(K,1)} \|\eta\|_{\bCH^{m', \beta}, B(K,1)} 2^{n(1
      - (\alpha + \beta - 1))}
  \end{gather*}
  Thus the series in~\eqref{eq:seriesW*} converges weakly-* and we
  naturally define $\omega \wedge \eta$ to be the weak* limit of
  $(\omega_n \wedge \eta_n)$. By Corollary~\ref{cor:411}, we have
  \begin{equation}
    \label{eq:wedgeCont}
  \| \omega \wedge \eta \|_{\bCH^{m, \alpha + \beta - 1}, K} \leq C \|
  \omega \|_{\bCH^{m,\alpha}, B(K, 1)} \| \eta \|_{\bCH^{m', \beta}, B(K, 1)}.
  \end{equation}
  Thus, $\wedge$ is continuous, as desired.

  Now we need to prove that (A) and (B) hold. (A) is easy, for if
  $\omega$ and $\eta$ are already smooth, then $(\omega_n \wedge
  \eta_n)$ converges locally uniformly (and thus weakly) to the
  pointwise wedge product of $\omega$ and $\eta$. Then the weak*
  and weak limits coincide, so we can conclude that $\omega \wedge
  \eta$ has its natural meaning.

  Finally we prove (B). Let $(\omega^{(p)})$ and $(\eta^{(p)})$ two
  sequences, indexed by $p \geq 0$, that converge weakly-* towards
  $\omega \in \bCH^{m, \alpha}(\R^d)$ and $\eta \in \bCH^{m',
    \beta}(\R^d)$ as $p \to \infty$. As before, we fix a closed ball
  $K$ and we set $\omega_n^{(p)} = \omega^{(p)} * \Phi_{2^{-n}}$ and
  $\eta_n^{(p)} = \eta^{(p)} * \Phi_{2^{-n}}$ for all $n,
  p$. Inequality~\eqref{eq:wedgeCont}, applied to $\omega^{(p)}$ and
  $\eta^{(p)}$, shows that the sequence $(\omega^{(p)} \wedge
  \eta^{(p)})$ is bounded in $\bCH^{m+m', \alpha + \beta - 1}(\R^d)$.

  Proposition~\ref{prop:smooth}, more precisely
  formula~\eqref{eq:formulaConvOmega}, entails that, for all integer
  $n$, the smooth forms $\omega_n^{(p)}$ and $\eta_n^{(p)}$ converge
  locally uniformly to $\omega_n$ and $\eta_n$ as $p \to
  \infty$. Thus, for a fixed normal current $T$ with support in a non
  degenerate closed ball, we have
  \begin{multline*}
  T\left( \omega_{n+1} \wedge (\eta_{n+1} - \eta_n) + (\omega_{n+1} -
  \omega_n) \wedge \eta_n \right) = \\
  \lim_{p \to \infty} T\left(
  \omega_{n+1}^{(p)} \wedge (\eta_{n+1}^{(p)} - \eta_n^{(p)}) +
  (\omega_{n+1}^{(p)} - \omega_n^{(p)}) \wedge \eta_n^{(p)} \right).
  \end{multline*}
  Likewise,
  \[
  T(\omega_0 \wedge \eta_0) = \lim_{p \to \infty} T(\omega_0^{(p)}
  \wedge \eta_0^{(p)}).
  \]
  Arguing as before, one has
  \begin{multline*}
  \left| T \left( \omega_{n+1}^{(p)} \wedge (\eta_{n+1}^{(p)} -
  \eta_n^{(p)}) + (\omega_{n+1}^{(p)} - \omega_n^{(p)}) \wedge
  \eta_n^{(p)} \right) \right| \\ \leq C \|\omega^{(p)}\|_{\bCH^{m,
      \alpha}, B(K, 1)} \| \eta^{(p)} \|_{\bCH^{m', \beta}, B(K, 1)} 2^{n(1 -
    \alpha - \beta)} \bN(T).
  \end{multline*}
  As the sequences $(\omega^{(p)})$ and $(\eta^{(p)})$ are bounded,
  the previous bound can be made uniform in $p$, allowing us to apply
  Lebesgue's dominated convergence theorem in
  \begin{multline*}
    \lim_{p \to \infty} \omega^{(p)} \wedge \eta^{(p)}(T) \\
    \begin{aligned}
      & = \lim_{p \to \infty} \left(T(\omega_0^{(p)} \wedge
      \eta_0^{(p)}) + \sum_{n=0}^\infty T\left( \omega_{n+1}^{(p)}
      \wedge (\eta_{n+1}^{(p)} - \eta_n^{(p)}) + (\omega_{n+1}^{(p)} -
      \omega_n^{(p)}) \wedge \eta_n^{(p)} \right) \right)\\ & = \omega
      \wedge \eta(T)
    \end{aligned}
  \end{multline*}

  The case $\alpha = \beta = 1$, though simpler, requires special
  attention. The uniqueness of $\wedge$ is already established. As
  previously, we define $\omega \wedge \eta$ to be the weak* limit
  $\lim_n \omega_n \wedge \eta_n$. It exists because of the
  compactness theorem. Indeed, for a suitable $K$, we have, by
  Corollary~\ref{cor:411} and
  Proposition~\ref{prop:estimatesSmoothing}(A)
  (or~\ref{prop:estimatesSmoothing}(B))
  \begin{align*}
    \| \omega_n \wedge \eta_n \|_{\bCH^{m+m', 1}, K} & \leq C
    \|\omega_n\|_{\bCH^{m,1}, K} \| \eta_n \|_{\bCH^{m',1}, K} \\ &
    \leq C \| \omega \|_{\bCH^{m,1}, B(K, 1)} \| \eta \|_{\bCH^{m',
        1}, B(K, 1)}
  \end{align*}
  It remains to show that $(\omega_n \wedge \eta_n)$ converges
  weakly. This is the case because, for $T \in \bN_m(K)$,
  \begin{equation}
    \label{eq:cas11}
  T(\omega_N \wedge \eta_N) = T(\omega_0 \wedge \eta_0) +
  \sum_{n=0}^{N-1} T\left( \omega_{n+1} \wedge (\eta_{n+1} - \eta_n) +
  (\omega_{n+1} - \omega_n) \wedge \eta_n \right)
  \end{equation}
  As before, we establish
  \[
  \left| T\left( \omega_{n+1} \wedge (\eta_{n+1} - \eta_n) +
  (\omega_{n+1} - \omega_n) \wedge \eta_n \right) \right| \leq \frac{C
    \| \omega \|_{\bCH^{m, 1}, B(K, 1) } \| \eta \|_{\bCH^{m', 1},
      B(K, 1)} }{2^n}
  \]
  which ensures the absolute convergence of the series in~\eqref{eq:cas11}.

  Now that the wedge product is well-defined as a map
  $\bCH^{m,1}(\R^d) \times \bCH^{m', 1}(\R^d) \to \bCH^{m+m',
    1}(\R^d)$, properties (A) and (B) are shown as before.
\end{proof}

We remark that the Young-type integral defined in~\cite{Boua}
corresponds to a special case of this construction, namely the product
of a Hölder function (seen as a fractional $0$-charge) with a
$d$-charge.

Using the weak* density of smooth forms, it is easy to extend the
well-known formulae of exterior calculus to fractional charges. Among
them, we have, for $\omega \in \bCH^{m, \alpha}(\R^d)$, $\eta \in
\bCH^{m', \beta}(\R^d)$ and $\alpha + \beta > 1$,
\begin{itemize}
\item[(A)] $\wedge$ is bilinear;
\item[(B)] $\omega \wedge \eta = (-1)^{mm'} \eta \wedge \omega$;
\item[(C)] $\diff (\omega \wedge \eta) = \diff \omega \wedge \eta +
  (-1)^m \omega \wedge \diff \eta$.
\end{itemize}
The last item uses the weak*-to-weak* continuity of the exterior
derivative.

Regarding the product of many fractional charges, we notice that
$\omega_1 \wedge \cdots \wedge \omega_k$ makes sense (and the product
is associative) as long as $\omega_i$ is $\alpha_i$-fractional and
$\alpha_1 + \cdots + \alpha_k > k-1$. In this case, the result is an
$(\alpha_1 + \cdots + \alpha_k - (k-1))$-fractional charge.

Pullbacks of fractional charges by locally Hölder maps can also be
defined via duality with the pushforward operator on fractional
currents. However, this operation and its compatibility with the wedge
product is not studied in this work.

\section{Fractional currents as metric currents}
\label{sec:metricCurrents}

We conclude this paper with a section that discusses some questions
related to the characterization of metric currents of snowflaked
Euclidean spaces. This problematic was introduced in~\cite{Zust2},
upon which we further build.

Ambrosio and Kirchheim introduced metric
currents in~\cite{AmbrKirc}, while Lang provided in~\cite{Lang} a
definition of metric currents in locally compact metric spaces without
relying on a finite mass assumption.  Here, we reproduce the
definition of Hölder currents, a variant of Lang's definition, that
was introduced by Züst in~\cite[Definition~2.2]{Zust}. For simplicity,
we consider currents in \(\R^d\) with compact support.

\begin{defi}
Let $(\alpha_0, \dots, \alpha_m) \in \mathopen{(} 0, 1
\mathclose{]}^{m+1}$. An $m$-dimensional $(\alpha_0, \dots,
  \alpha_m)$-Hölder current is a multilinear map
\[
T \colon \prod_{k=0}^{m} \rmLip^{\alpha_k}(\R^d) \to \R
\]
that satisfies:
\begin{enumerate}
\item[(A)] Locality: $T(g^0, g^1, \dots, g^m) = 0$ whenever some there
  is $1 \leq k \leq m$ such that $g^k$ is constant on an open
  neighborhood of $\spt g^0$.
\item[(B)] Compact support: there is a compact set $K \subset \R^d$
  such that $T(g^0, g^1, \dots, g^m) = 0$ whenever $\spt g^0 \cap K =
  \emptyset$.
\item[(C)] Continuity: for any sequences $(g^k_n)_n$ ($0 \leq k \leq
  m$) that converge uniformly to $g^k$ with equi-bounded
  $\alpha_k$-Hölder constants, one has
  \[
  \lim_{n \to \infty} T(g^0_n, \dots, g^m_n) = T(g^0, \dots, g^m).
  \]
\end{enumerate}
The space of such currents is denoted $\calD_m^{(\alpha_0, \dots,
  \alpha_m)}(\R^d)$.
\end{defi}

The particular case $\alpha_0 = \cdots = \alpha_m = 1$ corresponds to
the compactly supported currents in the sense of Lang. More generally,
for any $\theta \in \mathopen{(} 0, 1 \mathclose{]}$, the
  $m$-dimensional $(\theta, \dots, \theta)$-Hölder currents are the
  compactly supported currents of the snowflaked Euclidean space
  $(\R^d, d_{\mathrm{Eucl}}^\theta)$, in the sense of Lang.

We expect that the class of $(\alpha_0, \dots, \alpha_m)$-Hölder
currents possesses a geometric description that depends solely on the
parameter
\[
\alpha = \alpha_0 + \cdots + \alpha_m - m.
\]
If $\alpha \leq 0$, then it is known (see \cite[Theorem~4.5]{Zust2})
that $\calD_m^{(\alpha_0, \dots, \alpha_m)}(\R^d) = 0$. If $\alpha =
1$, \textit{i.e.} $\alpha_0 = \cdots = \alpha_m = 1$, then it was
conjectured in~\cite{Lang} that the corresponding currents are in
correspondence with Federer-Fleming's flat currents. However, this has
been recently disproved by J. Tak\'a\v{c} in \cite{Takac}.

Next we treat the case $0 < \alpha \leq 1$. If $g^k \in
\rmLip^{\alpha_k}(\R^d)$ for all $0 \leq k \leq m$, then the functions
$g_k$ can be seen as $\alpha_k$-fractional $0$-charges over
$\R^d$. Therefore, the product of charges
\[
g^0 \wedge \diff g^1 \wedge \cdots \wedge \diff g^m
\]
makes sense, and is, according to Section~\ref{sec:ext}, an
$\alpha$-fractional charge. Therefore, to each $S \in
\bF^\alpha_m(\R^d)$, we may associate the multilinear map
\[
\Upsilon(S) \colon \prod_{k=0}^m \rmLip^{\alpha_k}(\R^d) \to \R \colon
(g^0, \dots, g^m) \mapsto \langle g^0 \wedge \diff g^1 \wedge \cdots
\wedge \diff g^m, S \rangle.
\]
It is easily proven that $\Upsilon$ is injective, and that, using the
continuity properties of $\wedge$, $\Upsilon(S)$ is an $m$-dimensional
$(\alpha_0, \dots, \alpha_m)$-Hölder current. We believe that
$\Upsilon$ is in fact a bijection $\calD_m^{(\alpha_0, \dots,
  \alpha_m)}(\R^d) \to \bF^\alpha_m(\R^d)$ if $\alpha \neq 1$. This
assertion may be regarded as a fractional variant of the flat chain
conjecture.

In particular, taking all $\alpha_k$ to be equal, we may regard
$\alpha$-fractional currents as metric currents in the sense of Lang
of the snowflaked space $(\R^d,
d_{\mathrm{Eucl}}^{\frac{m+\alpha}{m+1}})$.

\bibliographystyle{amsplain}
\bibliography{phil.bib}
%\printindex

\end{document}